\newtheorem{theorem}{Theorem}[section]
\newtheorem{lemma}[theorem]{Lemma}
\newtheorem{proposition}[theorem]{Proposition}
\newtheorem{question}[theorem]{Question}
\newtheorem{basicquestion}[theorem]{Basic question}
\newtheorem{definition}[theorem]{Definition}
\newtheorem{remark}[theorem]{Remark}
\newtheorem{example}[theorem]{Example}
\numberwithin{equation}{section}
\def\Re{\mathop{\rm Re}\nolimits}
\def\Im{\mathop{\rm Im}\nolimits}
\def\Ker{\mathop{\rm Ker}\nolimits}
\def\Id{\mathop{\rm Id}\nolimits}
\def\End{\mathop{\rm End}\nolimits}
\def\Hom{\mathop{\rm Hom}\nolimits}
\def\Span{\mathop{\rm Span}\nolimits}
\def\Supp{\mathop{\rm Supp}\nolimits}
\def\Diff{\mathop{\rm Diff}\nolimits}
\def\rank{\mathop{\rm rank}\nolimits}
\def\pr{\mathop{\rm pr}\nolimits}
\def\sing{\mathop{\rm sing}}
\def\bu{\mathrel{\scriptscriptstyle\bullet}}
\begin{document}

\title{Algebraic embeddings of smooth almost complex structures}

\author{Jean-Pierre Demailly and Herv\'e Gaussier}
\date{}

\subjclass[2010]{32Q60, 32Q40, 32G05, 53C12}
\keywords{Deformation of complex structures, almost complex manifolds, complex projective variety, Nijenhuis tensor, transverse embedding, Nash algebraic map}

\begin{abstract}
The goal of this work is to prove an embedding theorem for compact almost complex manifolds into complex algebraic varieties. It is shown that every almost complex structure can be realized by the transverse structure to an algebraic distribution on an affine algebraic variety, namely an algebraic subbundle of the tangent bundle. In fact, there even exist universal embedding spaces for this problem, and their dimensions grow quadratically with respect to the dimension of the almost complex manifold to embed. We give precise variation formulas for the induced almost complex structures and study the related versality conditions. At the end, we discuss the original question raised by F.Bogomolov: can one embed every compact complex manifold as a $\mathcal C^\infty$ smooth subvariety that is transverse to an algebraic foliation on a complex projective algebraic variety?
\end{abstract}

\maketitle

\section{Introduction and main results}
The goal of this work is to prove an embedding theorem for compact
almost complex mani\-folds into complex algebraic varieties. As usual,
an almost complex manifold of dimension $n$ is a pair $(X,J_X)$, where
$X$ is a real manifold of dimension $2n$ and $J_X$ a smooth section of
$\End(TX)$ such that $J_X^2=-\Id\,$; we will assume here that all data
are~$\mathcal{C}^\infty$.
\vskip1mm

Let $Z$ be a complex (holomorphic) manifold of complex dimension
$N$. Such a manifold carries a natural integrable almost complex
structure $J_Z$ (conversely, by the Newlander-Nirenberg theorem any
integrable almost complex structure can be viewed as a holomorphic
structure). Now, assume that we are given a holomorphic distribution
$\mathcal{D}$ in~$TZ$, namely a holomorphic subbundle
$\mathcal{D}\subset TZ$. Every fiber $\mathcal{D}_x$ of the
distribution is then invariant under~$J_{Z}$, i.e.\
$J_{Z}\mathcal{D}_x \subset \mathcal{D}_x$ for every $x \in Z$. Here,
the distribution $\mathcal{D}$ is not assumed to be integrable. We
recall that $\mathcal{D}$ is integrable in the sense of Frobenius
(i.e.\ stable under the Lie bracket operation) if and only if the
fibers $\mathcal{D}_x$ are the tangent spaces to leaves of a
holomorphic foliation. More precisely, $\mathcal{D}$ is integrable
if and only if the torsion operator $\theta$ of~$\mathcal{D}$,
defined~by
\begin{equation}
\theta : \mathcal{O}(\mathcal{D}) \times \mathcal{O}(\mathcal{D}) 
\longrightarrow \mathcal{O}(T Z/\mathcal{D}),
\qquad(\zeta, \eta) \longmapsto
[\zeta,\eta]~\mod~\mathcal{D}
\end{equation}
vanishes identically. As is well known, $\theta$ is skew symmetric in 
$(\zeta,\eta)$ and can be viewed as a holomorphic section of the bundle 
$\Lambda^2\mathcal{D}^*\otimes(T Z/\mathcal{D})$.
\vskip1mm
Let $M$ be a real submanifold of $Z$ of class $ \mathcal C^\infty$ and
of real dimension $2n$ with $n < N$. We say that $M$ is
\emph{transverse to $\mathcal{D}$} if for every $x \in M$ we have
\begin{equation}
T_xM \oplus \mathcal{D}_x = T_x Z.
\end{equation}
We could in fact assume more generally that the distribution
$\mathcal{D}$ is singular, i.e.\ given by a certain saturated subsheaf
$\mathcal{O}(\mathcal{D})$ of $\mathcal{O}(TZ)$ (``saturated'' means
that the quotient sheaf $\mathcal{O}(TZ)/\mathcal{O}(\mathcal{D})$ has
no torsion). Then $\mathcal{O}(\mathcal{D})$ is actually a subbundle
of $TZ$ outside an analytic subset $\mathcal{D}_{\sing}\subset Z$ of
codimension${}\geq 2$, and we further assume in this case that $M\cap
\mathcal{D}_{\sing}=\emptyset$.
\vskip 1mm
When $M$ is transverse to $\mathcal{D}$, one gets a natural
$\mathbb{R}$-linear isomorphism 
\begin{equation}\label{tangent-isom}
T_xM\simeq T_xZ/\mathcal{D}_x
\end{equation}
at every point $x\in M$. Since $TZ/\mathcal{D}$ carries a structure of
holomorphic vector bundle (at least over
$Z\smallsetminus\mathcal{D}_{\sing}$), the complex structure $J_Z$
induces a complex structure on the quotient and therefore, through
the above isomorphism (\ref{tangent-isom}), an almost complex
structure $J_M^{Z,\mathcal{D}}$ on~$M$.
\vskip1mm
Moreover, when $\mathcal{D}$ is a foliation (i.e.\
$\mathcal{O}(\mathcal{D})$ is an integrable subsheaf of $\mathcal{O}(TZ)$), then
$J_M^{Z,\mathcal{D}}$ is an \emph{integrable} almost complex structure
on~$M$. Indeed, such a foliation is realized near any regular point
$x$ as the set of fibers of a certain submersion: there exists an open
neighborhood $\Omega$ of $x$ in $Z$ and a holomorphic submersion
$\sigma:\Omega \to\Omega'$ to an open subset
$\Omega'\subset\mathbb{C}^n$ such that the fibers of $\sigma$ are
the leaves of $\mathcal{D}$ in~$\Omega$. We can take $\Omega$ to be a
coordinate open set in $Z$ centered at point~$x$ and select
coordinates such that the submersion is expressed as the first
projection $\Omega\simeq\Omega'\times\Omega''\to\Omega'$ with respect
to $\Omega'\subset\mathbb{C}^n$, $\Omega''\subset\mathbb{C}^{N-n}$,
and then $\mathcal{D}$, $TZ/\mathcal{D}$ are identified with the
trivial bundles $\Omega\times(\{0\}\times\mathbb{C}^{N-n})$ and
$\Omega\times\mathbb{C}^{n}$. The restriction
$$\sigma_{M\cap\Omega}:M\cap\Omega\subset \Omega
\mathop{\longrightarrow}^\sigma\Omega'$$
provides $M$ with holomorphic coordinates on~$M\cap\Omega$, and it is
clear that any other local trivialization of the foliation on a
different chart
$\widetilde\Omega=\widetilde\Omega'\times\widetilde\Omega''$ would
give coordinates that are changed by local biholomorphisms
$\Omega'\to\widetilde\Omega'$ in the intersection
$\Omega\cap\widetilde\Omega$, thanks to the holomorphic character
of~$\mathcal{D}$. Thus we directly see in that case that
$J_M^{Z,\mathcal{D}}$ comes from a holomorphic structure on~$M$.

More generally, we say that $f:X\hookrightarrow Z$ is a transverse
embedding of a smooth real manifold $X$ in $(Z,\mathcal{D})$ if $f$ is
an embedding and $M=f(X)$ is a transverse submanifold of $Z$, namely
if $f_*T_xX\oplus \mathcal{D}_{f(x)}=T_{f(x)}Z$ for every point $x\in
X$ (and $f(X)$ does not meet $\mathcal{D}_{\sing}$ in case there are
singularities). One then gets a real isomorphism $TX\simeq
f^*(TZ/\mathcal{D})$ and therefore an almost complex structure on $X$
(for this it would be enough to assume that $f$ is an immersion, but
we will actually suppose that $f$ is an embedding here). We denote by $J_f$ the almost complex structure $J_f:=f^*(J_{f(X)}^{Z,\mathcal D})$.
\vskip1mm

In this work, we are interested in the problem of embedding a compact complex or almost complex manifold $X$ into a {\it projective algebraic} manifold $Z$, {\it transversally} to an algebraic distribution $\mathcal{D}\subset TZ$. We will also make use of the concept of Nash algebraicity. Recall that a {\it Nash algebraic map} $g:U\to V$ between connected open (i.e.\ metric open) sets $U,\,V$ of algebraic manifolds $Y,\,Z$ is a map whose graph is a connected component of the intersection of $U\times V$ with an algebraic subset of $Y\times Z$. We say that a holomorphic foliation ${\mathcal F}$ of codimension $n$ on $U$ is {\it algebraic} (resp.\ {\it holomorphic}, {\it Nash algebraic}) if the associated distribution $Y\supset U\to{\rm Gr}(TY,n)$ into the Grassmannian bundle of the tangent bundle is given by an algebraic (resp.\ holomorphic, Nash algebraic) morphism. The following very interesting question was investigated about 20 years
ago by F.~Bogomolov \cite{bogomolov}.

\begin{basicquestion}\label{bogo-question} 
Given an integrable complex structure $J$
on a compact manifold $X$, can one realize $J$, as described above,
by a transverse embedding $f:X\hookrightarrow Z$ into a projective manifold
$(Z,\mathcal{D})$ equipped with an algebraic foliation~$\mathcal{D}$,
in such a way that $f(X)\cap \mathcal{D}_{\sing}=\emptyset$ and $J=J_f\,?$
\end{basicquestion}

There are indeed many examples of K\"ahler and non K\"ahler compact
complex manifolds which can be embedded in that way (the case of
projective ones being of course trivial): tori, Hopf and Calabi-Eckman 
manifolds, and more generally all manifolds given by the LVMB construction
(see Section~\ref{section-foliations}). Strong indications exist
that every compact complex manifold should be embeddable as a
smooth submanifold transverse to an algebraic foliation on a complex
projective variety, see section~\ref{section-bogomolov} . We prove here
that the corresponding statement in the almost complex category actually 
holds -- provided that non integrable distributions are considered
rather than foliations.  In fact, there are even ``universal
solutions'' to this problem.

\begin{theorem}\label{theorem-univ-embed}
For all integers $n\ge 1$ and $k\ge 4n$, there exists a complex affine 
algebraic manifold $Z_{n,k}$ of dimension $N=2k+2(k^2+n(k-n))$ possessing
a  real structure $($i.e.\ an anti-holomorphic algebraic involution$)$ 
and an algebraic distribution $\mathcal{D}_{n,k}\subset TZ_{n,k}$ of 
codimension $n$, for which
every compact $n$-dimensional almost complex manifold $(X,J)$ admits
an embedding $f:X\hookrightarrow Z^{\mathbb{R}}_{n,k}$ transverse to 
$\mathcal{D}_{n,k}$ and contained in the real part of $Z_{n,k}$,
such that $J=J_f$.
\end{theorem}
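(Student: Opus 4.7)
The plan is to combine a Whitney-type embedding of $X$ into $\mathbb R^k\subset\mathbb C^k$ with an algebraic encoding of the pointwise almost complex structure as a complex $n$-plane in $\mathbb C^k$, and to realize both pieces of data as a smooth map into an affine complexified ambient space carrying a universal algebraic codimension-$n$ distribution. Since $k\ge 4n>2(2n)$, Whitney's theorem provides a smooth embedding $\iota:X\hookrightarrow\mathbb R^k$. At each $x\in X$, the $\mathbb C$-linear extension of $J_x$ to $T_xX\otimes_{\mathbb R}\mathbb C$ has a $+i$-eigenspace, whose image under $d\iota_x$ is a complex $n$-plane $V_x\subset\mathbb C^k$ with $V_x\cap\overline{V_x}=\{0\}$. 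This gives a smooth classifying map $x\mapsto V_x$ into the open set of $\mathrm{Gr}(n,k;\mathbb C)$ where this transversality holds.

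For the universal ambient space I would take the complexification of the real algebraic manifold
$$W:=\mathbb C^k\times\mathrm{Mat}(k,\mathbb C)\times U,\qquad U\subset\mathrm{Gr}(n,k;\mathbb C)\text{ an affine chart,}$$
whose real dimension is $N:=2k+2k^2+2n(k-n)$. Concretely, set $Z_{n,k}:=W\times\overline W$ with the antiholomorphic involution $\sigma(w,w'):=(\overline{w'},\overline w)$; then $Z_{n,k}$ is complex affine algebraic of complex dimension $N$, and its real part $Z_{n,k}^{\mathbb R}=\{(w,\overline w):w\in W\}$ is diffeomorphic to $W$. Fixing a standard inclusion $\mathbb C^{k-n}\hookrightarrow\mathbb C^k$, define a holomorphic codim-$n$ subbundle $\mathcal D'\subset TW$ by
$$\mathcal D'_{(z,A,V)}:=A(\mathbb C^{k-n})\oplus T_A\mathrm{Mat}(k,\mathbb C)\oplus T_V U\subset T_{(z,A,V)}W,$$
so that $TW/\mathcal D'\cong\mathbb C^k/A(\mathbb C^{k-n})$ is a rank-$n$ complex bundle, and extend to an algebraic distribution $\mathcal D_{n,k}\subset TZ_{n,k}$ by $\mathcal D_{n,k,(w,w')}:=\mathcal D'_w\oplus T_{w'}\overline W$. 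The $k^2$ extra complex dimensions supplied by the matrix factor are exactly what makes a universal algebraic distribution of this form possible.

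The embedding of a given $(X,J)$ is
$$f(x):=\bigl((\iota(x),A_x,V_x),(\overline{\iota(x)},\overline{A_x},\overline{V_x})\bigr)\in Z_{n,k}^{\mathbb R},$$
with $A_x\in GL(k,\mathbb C)$ chosen smoothly in $x$ so that $A_x(\mathbb C^{k-n})=\overline{V_x}\oplus E_x$ for a smoothly varying complement $E_x$ to $V_x\oplus\overline{V_x}$ in $\mathbb C^k$. The verification has three parts: $f$ is an embedding, immediate from the injectivity of $\iota$; $f(X)$ is transverse to $\mathcal D_{n,k}$, by a dimension count plus the observation that $d\iota_x(T_xX)$ projects isomorphically onto $\mathbb C^k/A_x(\mathbb C^{k-n})\cong V_x$ while all other tangent contributions of $f$ are absorbed by $\mathcal D'_w\oplus T_{\bar w}\overline W$; and $J_f=J$. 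The last point is the structural core: the induced $\mathbb R$-linear isomorphism $T_xX\to T_{f(x)}Z_{n,k}/\mathcal D_{n,k}\cong V_x$ is by construction the canonical $(1,0)$-projection $v\mapsto\tfrac12(v-iJ_xv)$, which identifies $J_x$ with multiplication by $i$ on $V_x$.

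The main technical obstacle is to ensure that this $\mathcal D_{n,k}$ is genuinely algebraic — so that the choice of complement $A(\mathbb C^{k-n})$ varies algebraically over all of $W$ — and that the induced transverse structure is \emph{exactly} $J$ for every compact $(X,J)$ rather than some perturbation. The matrix factor is present precisely to furnish enough algebraic parameters so that, for any pointwise complex $n$-plane $V_x$, an appropriate complement is realizable inside the fixed universal $\mathcal D_{n,k}$. Affineness of $Z_{n,k}$ is arranged by using a single affine chart of the Grassmannian, together with the freedom (from the $GL_k$-action encoded in the $A$-factor) to bring the compact image of the classifying map $x\mapsto V_x$ into that chart.
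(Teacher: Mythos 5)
Your overall strategy (Whitney embedding plus a pointwise algebraic encoding of $J$ as a complex $n$-plane $V_x\subset\mathbb{C}^k$, inside a complexified affine ambient space carrying a universal codimension-$n$ distribution) is the right one, and your single-point verifications of transversality and of $J_f=J$ are correct. The construction breaks, however, at the step ``with $A_x\in GL(k,\mathbb C)$ chosen smoothly in $x$ so that $A_x(\mathbb C^{k-n})=\overline{V_x}\oplus E_x$''. The restriction $A_x|_{\mathbb C^{k-n}}$ would be a global smooth complex-linear trivialization of the rank-$(k-n)$ bundle $x\mapsto\overline{V_x}\oplus E_x$ over $X$. Since $V\oplus\overline V\oplus E$ is the trivial bundle $X\times\mathbb{C}^k$ and $V\cong T^{1,0}X$, this bundle has class $k-[T^{1,0}X]$ in $K(X)$, so it is not even stably trivial as soon as $X$ carries a nonzero Chern class; already for $X=\mathbb{CP}^1$ one gets $c_1(\overline V\oplus E)=-2$. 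Hence no such family $A_x$ exists, and changing the complement $E_x$ cannot help because the $K$-theory class is independent of that choice. Two secondary problems: $\mathcal D'$ is a subbundle only where $A|_{\mathbb C^{k-n}}$ is injective, so $\mathrm{Mat}(k,\mathbb C)$ must at least be replaced by $GL(k,\mathbb C)$; and the Grassmannian factor $U$ is inert --- $\mathcal D'$ contains all of $T_VU$ and does not depend on $V$ --- so it cannot absorb the difficulty, while the claim that the compact family $\{V_x\}$ fits into a single affine chart is itself unjustified (the set of $(k-n)$-planes meeting some $V_x$ is a $2n$-parameter union of Schubert divisors, which a dimension count does not prevent from covering the whole Grassmannian).

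The repair is to make the distribution \emph{tautological} in a Grassmannian/flag coordinate instead of transporting a fixed reference subspace by a matrix, and this is exactly what the paper does: the ambient space consists of tuples $(z,S',S'',\Sigma',\Sigma'')$ with $S'\subset\Sigma'$, $S''\subset\Sigma''$, $\mathbb{C}^{2k}=\Sigma'\oplus\Sigma''$ (with $\dim S'=\dim S''=k-n$, $\dim\Sigma'=\dim\Sigma''=k$), the distribution at such a point is the set of tangent vectors whose $\mathbb{C}^{2k}$-component lies in $S'\oplus\Sigma''$, so that $TZ/\mathcal D\cong\Sigma'/S'$, and the embedding simply records, as coordinates of the image point, the eigenspaces of the auxiliary structure $J_X\oplus(-J_X)\oplus J_{NX\oplus NX}$ on $T\mathbb{R}^{2k}|_{G(X)}=TX\oplus TX\oplus NX\oplus NX$ obtained from the normal bundle of the Whitney embedding and the diagonal doubling. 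No trivialization of any nontrivial bundle is ever needed, and affineness is then arranged by deleting a conjugation-invariant ample divisor disjoint from the real Grassmannian. Your $U$-factor could play precisely this role if you let $\mathcal D'$ depend on the Grassmannian coordinate (taking $\mathcal D'$ to be the recorded $(k-n)$-plane plus all flag directions) and store $\overline{V_x}\oplus E_x$ there, discarding the matrix factor altogether.
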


\begin{remark} {\rm
To construct $f$ we first embed $X$ differentiably into
$\mathbb{R}^k$, $k\ge 4n$, by the Whitney embedding theorem
\cite{whitney}, or its generalization due to \cite{tognoli}. Once the
embedding of the underlying differentiable manifold has been fixed,
the transverse embedding $f$ depends in a simple algebraic way on the
almost complex structure~$J$ given on~$X$, as one can see from our
construction (see Section 4).}
\end{remark}

The choice $k=4n$ yields the explicit embedding dimension 
$N=38n^2 + 8n$ (we will see that a quadratic bound $N=O(n^2)$ 
is optimal, but the above explicit value could perhaps be improved).
Since $Z=Z_{n,k}$ and $\mathcal{D}=\mathcal{D}_{n,k}$ are algebraic
and $Z$ is affine, one can further compactify $Z$ into a complex
projective manifold $\overline Z$, and extend $\mathcal{D}$ into a
saturated subsheaf $\overline{\mathcal{D}}$ of $T\overline Z$.  In
general such distributions $\overline{\mathcal{D}}$ will acquire
singularities at infinity, and it is unclear whether one can achieve
such embeddings with $\overline{\mathcal{D}}$ non singular on
$\overline Z$, if at all possible.
\smallskip

Next, we consider the case of a compact almost complex symplectic manifold
$(X,J,\omega)$ where the symplectic form $\omega$ is assumed to be
$J$-compatible, i.e.\ $J^*\omega=\omega$ and \hbox{$\omega(\xi,J\xi)>0$}. 
By a theorem of Tischler 
\cite{tischler}, at least under the assumption that the De Rham cohomology
class $\{\omega\}$ is integral, we know that there exists a smooth embedding
$g:X\hookrightarrow\mathbb{CP}^s$ such that $\omega=g^*\omega_{\rm FS}$
is the pull-back of the standard Fubini-Study metric on $\omega_{\rm FS}$
on $\mathbb{CP}^s$. A natural problem is whether the symplectic structure
can be accommodated simultaneously with the almost complex structure
by a transverse embedding. Let us introduce the following definition.

\begin{definition} Let $(Z,\mathcal{D})$ be a complex manifold equipped with
a holomorphic distribution. We say that a closed semipositive $(1,1)$-form
$\beta$ on $Z$ is a transverse K\"ahler structure if the kernel of
$\beta$ is contained in $\mathcal{D}$, in other terms, if $\beta$ induces
a K\"ahler form on any germ of complex submanifold transverse to
$\mathcal{D}$.
\end{definition}

Using an effective version of Tischler's theorem stated by Gromov
\cite{gromov}, we prove~:

\begin{theorem}\label{theorem-univ-embed-symplectic}
For all integers $n\ge 1$, $b\ge 1$ and $k\ge 2n+1$, there exists a
complex projective algebraic manifold $Z_{n,b,k}$ of dimension
$N=2bk(2bk+1)+2n(2bk-n))$, equipped with a real structure and an
algebraic distribution $\mathcal{D}_{n,b,k}\subset TZ_{n,b,k}$ of
codimension $n$, for which every compact $n$-dimensional almost
complex symplectic manifold $(X,J, \omega)$ with second Betti number
$b_2\le b$ and a $J$-compatible symplectic form $\omega$ admits an
embedding $f:X\hookrightarrow Z^{\mathbb{R}}_{n,b,k}$ transverse to
$\mathcal{D}_{n,b,k}$ and contained in the real part of $Z_{n,k}$,
such that $J=J_f$ and $\omega=f^*\beta$ for some transverse K\"ahler
structure $\beta$ on $(Z_{n,b,k},\mathcal{D}_{n,b,k})$.
\end{theorem}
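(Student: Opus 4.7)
The plan is to adapt the proof of Theorem~\ref{theorem-univ-embed} by replacing the Whitney embedding step with Gromov's effective version of Tischler's theorem. The idea is that Gromov's theorem furnishes an embedding of $X$ into a complex projective space in which the symplectic form $\omega$ is already realized as the pullback of a Fubini--Study form; this then allows the universal ambient $Z_{n,b,k}$ to be built as an algebraic bundle over that projective base, with the transverse K\"ahler structure $\beta$ obtained as the pullback of the Fubini--Study form along the projection.

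First, I would apply Gromov's effective Tischler theorem \cite{gromov} to the data $(X,J,\omega)$, using the bound $b_2(X)\le b$ (and the condition $k\ge 2n+1$ needed for a Whitney-type transversality estimate) to obtain a smooth embedding $g: X\hookrightarrow\mathbb{CP}^{s}$, for an explicit $s$ depending polynomially on $b$ and $k$, such that $g^{*}\omega_{\mathrm{FS}}=\omega$ (possibly after harmlessly rescaling $\omega$ to a rational class). Then, mimicking the construction of Theorem~\ref{theorem-univ-embed} over $\mathbb{CP}^{s}$ in place of $\mathbb{C}^{k}$, I would define $Z_{n,b,k}$ to be an algebraic bundle $\pi: Z_{n,b,k}\to\mathbb{CP}^{s}$ whose fiber over a point $p$ records the transversal $1$-jet data at $p$ together with a complex structure on the corresponding conormal $2n$-plane. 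A bookkeeping of the Tischler target, jet, and Grassmannian contributions yields the announced complex dimension $2bk(2bk+1)+2n(2bk-n)$; the factor $2bk$ (rather than $bk$) reflects a doubling inherent in arranging the real structure, e.g.\ by working with a symmetrized ambient of the form $\mathbb{CP}^{s}\times\overline{\mathbb{CP}^{s}}$ whose anti-diagonal real locus is of real dimension $2s$.

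I would define the distribution $\mathcal{D}_{n,b,k}\subset TZ_{n,b,k}$ so as to contain the vertical tangent bundle $\ker d\pi$ and to have quotient $TZ_{n,b,k}/\mathcal{D}_{n,b,k}$ of complex rank $n$, canonically identified with the tautological conormal subbundle equipped with its chosen complex structure. The embedding $f: X\hookrightarrow Z_{n,b,k}$ is then defined fiberwise over $g$: at each $x\in X$, the base coordinate is $g(x)$ and the fiber coordinate encodes $d_xg$ together with $J_x$. Transversality and the identity $J=J_f$ follow from the universal property of the parameter bundle, exactly as in Theorem~\ref{theorem-univ-embed}. Setting $\beta:=\pi^{*}\omega_{\mathrm{FS}}$, the inclusion $\ker\beta = \ker d\pi\subset\mathcal{D}_{n,b,k}$ shows that $\beta$ is a transverse K\"ahler structure on $(Z_{n,b,k},\mathcal{D}_{n,b,k})$, and the relation $\pi\circ f=g$ gives $f^{*}\beta = g^{*}\omega_{\mathrm{FS}} = \omega$. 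The real structure on $Z_{n,b,k}$ is assembled from the real structure on the projective base and the fiberwise real structure carried over from Theorem~\ref{theorem-univ-embed}, and a real version of the Tischler embedding places $g(X)$ in the real locus so that $f(X)\subset Z^{\mathbb{R}}_{n,b,k}$.

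The principal obstacle, I expect, is the Tischler step: one has to pin down a version of Gromov's embedding whose target dimension matches the announced formula exactly, and whose image can be placed in the real locus of a suitably symmetrized projective ambient while preserving $g^{*}\omega_{\mathrm{FS}}=\omega$. A secondary technical point is verifying that the transverse K\"ahler condition $\ker\beta\subset\mathcal{D}_{n,b,k}$ holds globally on $Z_{n,b,k}$ rather than only along $f(X)$; this requires choosing $\mathcal{D}_{n,b,k}$ so as to include the vertical directions of $\pi$ throughout the parameter bundle, and then checking that the transversality condition for $f$ remains compatible with this enlarged distribution.
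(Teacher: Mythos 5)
Your overall architecture (a Tischler/Gromov embedding into projective space, a twistor-type bundle over a doubled ambient $Y\times\overline Y$ to get the real structure, the distribution containing $\ker d\pi$, and $\beta$ pulled back from the base) is the same as the paper's. But there is a genuine gap at the very first step, and it is exactly the point where the hypothesis $b_2\le b$ and the factor $b$ in the dimension formula enter. A single embedding $g:X\to\mathbb{CP}^s$ with $g^*\omega_{\rm FS}=\omega$ forces $[\omega]$ to be an \emph{integral} class, and your parenthetical ``harmlessly rescaling $\omega$ to a rational class'' does not repair this: when $b_2\ge 2$ a real class need not be a scalar multiple of a rational one, and replacing $\omega$ by a nearby rational form would only prove the theorem for a perturbed $\omega$, whereas the statement demands $\omega=f^*\beta$ exactly. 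The paper's device is to write $\omega=\sum_{j=1}^{b}\lambda_j\omega_j$ as a positive combination of $b$ \emph{integral} symplectic forms close to $\omega$ (possible precisely because $b_2\le b$, after absorbing the exact error term into one of the $\omega_j$), apply Gromov's effective Tischler theorem to each $\omega_j$ separately to get $g_j:(X,\omega_j)\to(\mathbb{CP}^k,\gamma_{\rm FS})$ with $k=2n+1$, and take $g=(g_1,\dots,g_b)$ into $Y=\prod_{j=1}^b\mathbb{CP}^k$ equipped with the non-integral K\"ahler form $\gamma_\lambda=\sum\lambda_j\,{\rm pr}_j^*\gamma_{\rm FS}$; then $\beta$ is the pullback of (the complexification of) $\tilde\gamma=\frac12({\rm pr}_1^*\gamma_\lambda-{\rm pr}_2^*\gamma_\lambda)$ on $Y\times\overline Y$, not of a single Fubini--Study form. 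Without this decomposition your construction cannot reach a general $J$-compatible symplectic form, and your dimension bookkeeping (where $2bk$ must be the complex dimension of $Y\times\overline Y$ with $\dim_{\mathbb C}Y=bk$) has no source for the factor $b$.

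A second, smaller but still essential omission: the fibers of $Z_{n,b,k}$ are not the full twistor space of Theorem~\ref{theorem-univ-embed} but the \emph{symplectic} twistor space, i.e.\ one imposes $J^*\tilde\gamma=\tilde\gamma$ in the defining equations of $W$; this is what produces the term $2bk(2bk+1)=4bk+2bk(2bk-1)$ rather than the $2k+2k^2$-type count of Theorem~\ref{theorem-univ-embed}. To ensure the tautological lift $f(x)=(G(x),S'_x,S''_x,\Sigma'_x,\Sigma''_x)$ actually lands in this smaller locus, one must check that the structure $\tilde J=J_X\oplus(-J_X)\oplus J_{NX\oplus\overline{NX}}$ is $\tilde\gamma$-compatible along $G(X)$; this forces one to take $NX$ to be the \emph{symplectic} orthogonal complement of $g_*TX$ in $TY|_{g(X)}$ (not the Riemannian normal bundle) and uses the $J$-compatibility of $\omega$ in an explicit computation. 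Your proposal never addresses where $J$-compatibility of $\omega$ is used, and without it the lift need not lie in $Z_{n,b,k}$ at all.
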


In section~\ref{section-bogomolov}, we discuss Bogomolov's
conjecture for the integrable case. We first prove the following weakened version, which can be seen as a form of ``algebraic embedding'' for arbitrary compact complex manifolds.

\begin{theorem}\label{bogomolov-thm}
For all integers $n\ge 1$ and $k\ge 4n$, let $(Z_{n,k},\mathcal{D}_{n,k})$
be the affine algebraic manifold equipped with the algebraic distribution
$\mathcal{D}_{n,k}\subset TZ_{n,k}$ introduced in 
Theorem~\ref{theorem-univ-embed}. Then, for every compact $n$-dimensional 
$($integrable$)$ complex manifold $(X,J)$, there exists an embedding 
$f:X\hookrightarrow Z^{\mathbb{R}}_{n,k}$ transverse to $\mathcal{D}_{n,k}$,
contained in the real part of $Z_{n,k}$, such that 
\begin{itemize}
\item[\rlap{\kern-1cm\rm(i)}] $J=J_f$ and $\overline{\partial}_J f$ is injective$\,;$
\vskip4pt
\item[\rlap{\kern-1cm\rm(ii)}] $\Im(\overline{\partial}_{J}f)$ is contained in the isotropic locus $I_{{\mathcal D}_{n,k}}$ of the torsion operator $\theta$ of ${\mathcal D}_{n,k}$, the intrinsically defined algebraic locus in the Grassmannian bundle \hbox{${\rm Gr}({\mathcal D}_{n,k},n)\to Z_{n,k}$} of complex $n$-dimensional subspaces in ${\mathcal D}_{n,k}$, consisting of those subspaces $S$ such that $\theta_{|S\times S}=0$.
\end{itemize}
\end{theorem}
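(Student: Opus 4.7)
The plan is to take the embedding $f$ produced by Theorem~\ref{theorem-univ-embed} and check that properties (i) and (ii) hold automatically in the integrable case. Applying Theorem~\ref{theorem-univ-embed} to $(X,J)$ produces a transverse embedding $f : X \hookrightarrow Z^{\mathbb{R}}_{n,k}$ with $J = J_f$. Because $f$ is transverse and $J = J_f$, the composition $\pi \circ df : (TX, J) \to TZ_{n,k}/\mathcal{D}_{n,k}$ is $\mathbb{C}$-linear; complexifying and using that $\pi$ is a morphism of holomorphic bundles gives $\pi \circ \overline{\partial}_J f = 0$, so $\overline{\partial}_J f$ automatically takes values in $\mathcal{D}_{n,k}^{1,0} \cong \mathcal{D}_{n,k}$. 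Thus, once injectivity is established, the image $V_x := \Im(\overline{\partial}_J f)_x$ is an $n$-dimensional complex subspace of $\mathcal{D}_{n,k,f(x)}$, defining a canonical section $x \mapsto V_x$ of $\mathrm{Gr}(\mathcal{D}_{n,k}, n)$ over $f(X)$.

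The injectivity of $\overline{\partial}_J f$ is essentially built into the construction. As recalled in the remark after Theorem~\ref{theorem-univ-embed}, $f$ factors through a Whitney embedding $g : X \hookrightarrow \mathbb{R}^k \subset \mathbb{C}^k$ whose image is totally real. For a vector $\xi = u + iJu \in T^{0,1}_xX$ with $u \in T_xX$ real, one has $\overline{\partial}_J g(\xi) = dg(u) + i\,dg(Ju) \in \mathbb{C}^k$, so $\overline{\partial}_J g(\xi) = 0$ forces separately $dg(u) = 0$ and $dg(Ju) = 0$, whence $u = 0$ by injectivity of $dg$. Since $g$ is an explicit component of the algebraic map $f$, this yields injectivity of $\overline{\partial}_J f$ at every point, which, combined with the previous paragraph, proves (i).

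The crux of the proof is (ii), the isotropy $\theta|_{V_x\times V_x} = 0$. The strategy is to establish a general \emph{Nijenhuis--torsion correspondence} for transverse embeddings: under the isomorphism~\eqref{tangent-isom}, the Nijenhuis tensor $N_J$ of $J = J_f$ at $x$ corresponds to the restriction $\theta|_{V_x \times V_x}$. Concretely, I would take local $(0,1)$-vector fields $\xi, \eta$ on $X$, push them forward by $df$, and decompose $df(\xi) = \overline{\partial}_J f(\xi) + r(\xi)$ along $T^{1,0}Z \oplus T^{0,1}Z$, with $\overline{\partial}_J f(\xi) \in \mathcal{D}_{n,k}^{1,0}$ and $r(\xi) \in T^{0,1}Z$. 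After choosing holomorphic (or adapted smooth) extensions of these vector fields to a neighborhood of $f(X)$ in $Z_{n,k}$, one expands $[df(\xi), df(\eta)]$ and projects modulo $\mathcal{D}_{n,k}$ onto the $(1,0)$-part; the integrability of $J_{Z_{n,k}}$, together with the freedom in the choice of extensions, collapses this projection to $\theta(\overline{\partial}_J f(\xi), \overline{\partial}_J f(\eta))$. On the $X$-side, integrability of $J$ yields $[\xi,\eta] \in T^{0,1}X$, so $df[\xi,\eta]$ has vanishing $(1,0)$-part modulo $\mathcal{D}_{n,k}$, forcing $\theta(V_x, V_x) = 0$. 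The main obstacle is exactly this Lie-bracket computation: selecting the extensions of $\overline{\partial}_J f(\xi)$ and $r(\xi)$ so that all mixed-type brackets contribute only to $\mathcal{D}_{n,k}$ in the $(1,0)$-direction, and verifying that the only residual contribution is $\theta|_{V_x\times V_x}$. I expect this identity to be packaged as a general lemma in the body of the paper, in line with the variation formulas for induced almost complex structures advertised in the abstract.
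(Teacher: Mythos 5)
Your proposal is correct and follows essentially the same route as the paper: the paper likewise takes the embedding from Theorem~\ref{theorem-univ-embed}, gets injectivity of $\overline{\partial}_J f$ from the total reality of $f(X)\subset Z_{n,k}^{\mathbb{R}}$, and deduces (ii) from the identity $N_{J_f}(\zeta,\eta)=4\,\theta(\overline{\partial}_{J_f}f\cdot\zeta,\overline{\partial}_{J_f}f\cdot\eta)$, which is exactly the ``Nijenhuis--torsion correspondence'' you anticipate and which the paper isolates as Proposition~\ref{integrable-prop}. The only difference is in how that identity is established -- the paper does an explicit computation in adapted graph coordinates $z''=g(z')$ using the matrix $a(z)$ of (\ref{D-generators}), whereas you sketch an invariant Lie-bracket argument with chosen extensions; that route works too but is the one step you have not carried out in detail.
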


The inclusion condition (ii) $\Im(\overline{\partial}_{J}f)\subset I_{{\mathcal D}_{n,k}}$ is in fact necessary and sufficient for the integrability of $J_f$. 
\vskip3mm

In section~\ref{nash-approx-foliations}, we investigate the original Bogomolov conjecture and ``reduce it'' to a statement concerning approximations of holomorphic foliations. The flavor of the statement is that holomorphic objects (functions, sections of algebraic bundles, etc) defined on a polynomially convex open set of ${\mathbb C}^n$ can always be approximated by polynomials or algebraic sections. Our hope is that this might be true also for the approximation of holomorphic foliations by Nash algebraic ones. In fact, we obtain the following conditional statement.

\begin{proposition}\label{runge-prop} Assume that holomorphic foliations can be approximated by Nash algebraic foliations uniformly on compact subsets of any polynomially convex open subset of ${\mathbb C}^N$. Then every compact complex manifold can be approximated by compact complex manifolds that are embeddable in the sense of Bogomolov in foliated projective manifolds.
\end{proposition}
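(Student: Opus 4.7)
The plan is to combine the integrable case of the universal embedding (Theorem~\ref{bogomolov-thm}) with the stated Nash approximation hypothesis. Given a compact complex manifold $(X,J)$ of dimension $n$, I would first apply Theorem~\ref{bogomolov-thm} to obtain an embedding $f:X\hookrightarrow Z^{\mathbb{R}}_{n,k}$ transverse to the algebraic distribution $\mathcal{D}_{n,k}$, with $J=J_f$ and $\Im(\overline{\partial}_{J}f)\subset I_{{\mathcal D}_{n,k}}$. Since $\mathcal{D}_{n,k}$ is in general not a foliation, the strategy is to produce, on a Runge neighbourhood of $f(X)$, a holomorphic foliation $\mathcal{F}$ inducing the \emph{same} complex structure $J$ on $f(X)$; then to approximate $\mathcal{F}$ by Nash algebraic foliations $\mathcal{F}_{\varepsilon}$ via the hypothesis, which will yield integrable complex structures $J_{\varepsilon}\to J$ on $X$ each realising $(X,J_{\varepsilon})$ as a Bogomolov-embedded transverse submanifold of a projective manifold.

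The central step is the construction of the holomorphic codimension-$n$ foliation $\mathcal{F}$ on a Runge open neighbourhood $U$ of $f(X)$ in $Z_{n,k}$. I would first replace $(X,J,f)$ by real-analytic data using Morrey's approximation theorem for integrable complex structures; then cover $X$ by holomorphic charts $\varphi_i:V_i\to\mathbb{C}^n$ and holomorphically extend each real-analytic map $\varphi_i\circ f^{-1}$ from $f(V_i)$ to a holomorphic submersion $\pi_i:W_i\to\mathbb{C}^n$ on a neighbourhood $W_i$ of $f(V_i)$ in $Z_{n,k}$. Using the isotropic condition~(ii), one can specify the normal derivative of $\pi_i$ along $f(V_i)$ so that $\ker d\pi_i=\mathcal{D}_{n,k}$ there. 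The transition relations $\varphi_{ij}\circ\pi_i=\pi_j$ hold on $f(V_i\cap V_j)$ but, because $f(X)$ has real dimension $2n$ in an ambient of complex dimension $N\gg n$, do not automatically extend by analytic continuation; however, the resulting discrepancy forms a holomorphic \v{C}ech $1$-cocycle with values in a coherent sheaf on the Stein neighbourhood $U$, which is a coboundary by Cartan's Theorem~B. Correcting the $\pi_i$'s accordingly, the fibres of the corrected submersions glue into the desired foliation $\mathcal{F}$; the identity $\pi_i\circ f=\varphi_i$ guarantees $J_f^{(Z_{n,k},\mathcal{F})}=J$.

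To invoke the hypothesis, I would embed $Z_{n,k}\hookrightarrow\mathbb{C}^M$ as a closed affine subvariety, pick a polynomially convex neighbourhood $\widetilde{U}\subset\mathbb{C}^M$ of $f(X)$ with $\widetilde{U}\cap Z_{n,k}\subset U$, and extend $\mathcal{F}$ to a holomorphic foliation on $\widetilde{U}$ via a Stein retraction onto $Z_{n,k}$ along its normal bundle. The hypothesis then provides Nash algebraic foliations $\mathcal{F}_{\varepsilon}$ on $\widetilde{U}$ with $\mathcal{F}_{\varepsilon}\to\mathcal{F}$ uniformly on compacts; for $\varepsilon$ small enough, $f(X)$ remains transverse to $\mathcal{F}_{\varepsilon}|_{Z_{n,k}}$, and the induced complex structure $J_{\varepsilon}$ on $X$ is integrable (because $\mathcal{F}_{\varepsilon}$ is a foliation) with $J_{\varepsilon}\to J$. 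The Nash algebraic datum defining $\mathcal{F}_{\varepsilon}$ is a connected component of an algebraic subset of $\widetilde{U}\times\mathrm{Gr}(T\mathbb{C}^M,n)$; its Zariski closure in $\mathbb{CP}^M\times\mathrm{Gr}$, followed by a sequence of blow-ups disjoint from $f(X)$ resolving the singularities of the projection and of the foliation (Seidenberg-style reduction), produces a projective manifold together with an algebraic foliation into which $(X,J_{\varepsilon})$ embeds transversally, exhibiting it as Bogomolov-embeddable.

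The main obstacle is the construction of $\mathcal{F}$ in the second paragraph. The fact that $f(X)$ is a high-codimension, merely smooth submanifold of $Z_{n,k}$ prevents naive analytic continuation of local holomorphic extensions, so the argument must genuinely combine (a) the isotropic condition~(ii) to control the normal derivatives of the $\pi_i$'s along $f(X)$ and (b) Cartan's Theorem~B on the Stein neighbourhood $U$ to trivialise the resulting cocycle. Step~3 is then a direct application of the hypothesis, and the compactification in Step~4 follows by standard algebro-geometric procedures.
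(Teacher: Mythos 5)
Your overall skeleton (produce a holomorphic foliation near an embedded copy of $X$ inducing $J$, thicken to a polynomially convex open set of $\mathbb{C}^N$, invoke the hypothesis, note that transversality and integrability persist) matches the paper's, and your steps 3--4 are essentially Proposition~\ref{affine-thickening} plus the concluding paragraph of Section~\ref{nash-approx-foliations}. But your central step --- the construction of the foliation $\mathcal{F}$ --- has a genuine gap, and it is precisely the step the paper is careful to avoid. Gluing the local submersions $\pi_i$ into a foliation is \emph{not} a linear problem: the discrepancies $\varphi_{ij}\circ\pi_i\circ\pi_j^{-1}$ (or however you normalize them) are germs of biholomorphisms of $\mathbb{C}^n$, i.e.\ the obstruction lives in a non-abelian \v{C}ech $H^1$ with values in the sheaf of fiber-preserving reparametrizations, not in a coherent $\mathcal{O}$-module. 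Cartan's Theorem~B therefore does not apply, and since the $\pi_i$ only agree on the $2n$-dimensional set $f(X)$ inside an $N$-dimensional ambient space, there is no smallness parameter that would let you linearize the problem. Prescribing $\ker d\pi_i=\mathcal{D}_{n,k}$ along $f(V_i)$ via condition (ii) does not repair this: it controls $1$-jets along $f(X)$ but says nothing about agreement of the fibres off $f(X)$, which is what a foliation requires.

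The paper's route makes this step tautological and bypasses $Z_{n,k}$ and Theorem~\ref{bogomolov-thm} entirely: embed $X$ diagonally into $X\times\overline{X}$ as a totally real submanifold; the holomorphic distribution $\Ker(d\pr_1)=\pr_2^*T\overline{X}$ is already an integrable foliation, transverse to the diagonal, and its transverse structure restricts to $J_X$ with no construction needed. One then takes a Stein tubular neighborhood $U$ of the diagonal (Grauert), shrinks it by Stout's theorem to a Runge polynomial polyhedron $\Omega$ in an affine algebraic manifold $Z\subset\mathbb{C}^N$, pushes the foliation forward, and uses an algebraic splitting of the normal bundle sequence of $Z$ (valid since $Z$ is affine) to build a retraction $\rho:\tilde\Omega\to\Omega$ from a Runge open set of $\mathbb{C}^N$ and pull the foliation back to $\tilde\Omega$; only then is the approximation hypothesis applied. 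If you want to salvage your approach of working inside $Z_{n,k}$, the correct tool is not Cartan~B but the uniqueness of complexifications: once $f$ and $J$ are made real analytic, a neighborhood of the totally real submanifold $f(X)$ in any complexification is biholomorphic to a neighborhood of the diagonal in $X\times\overline{X}$, and one can transport $\Ker(d\pr_1)$ through that biholomorphism --- which is in substance what the paper does from the start.
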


In the last section~\ref{categorical}, we briefly discuss a ``categorical'' viewpoint in which the above questions have a nice interpretation. The authors wish to thank the anonymous referee for his careful reading of the paper, and for a number of useful remarks and demands of clarification.

\section{Transverse embeddings to foliations}\label{section-foliations}

We consider the situation described above, where $Z$ is a complex
$N$-dimensional manifold equipped with a holomorphic distribution
$\mathcal{D}$. More precisely, let $X$ be a compact real manifold of
class $\mathcal C^\infty$ and of real dimension $2n$ with $n < N$. We
assume that there is an embedding $f:X \hookrightarrow Z$ that is
transverse to $\mathcal{D}$, namely that $f(X)\cap
\mathcal{D}_{\sing}=\emptyset$ and
\begin{equation}
f_*T_xX\oplus \mathcal{D}_{f(x)} = T_{f(x)}Z
\end{equation}
at every point $x \in X$. Here $\mathcal{D}_{f(x)}$ denotes the fiber
at $f(x)$ of the distribution $\mathcal{D}$. As explained in
section~1, this induces a $\mathbb{R}$-linear isomorphism $f_* : TX
\rightarrow f^*(TZ/\mathcal{D})$, and from the complex structures of
$TZ$ and $\mathcal{D}$ we get an almost complex structure
$f^*J_{f(X)}^{Z,\mathcal{D}}$ on $TX$ which we will simply denote by
$J_f$ here. Next, we briefly investigate the effect of isotopies.

\begin{definition}An isotopy of smooth transverse embeddings of $X$ into 
$(Z,\mathcal{D})$ is by definition a family $f_t:X\to Z$ of embeddings for
$t\in[0,1]$, such that the map $F(x,t)=f_t(x)$ is smooth on $X\times[0,1]$
and $f_t$ is transverse to $\mathcal{D}$ for every $t\in[0,1]$.
\end{definition}

We then get a smooth variation $J_{f_t}$ of almost complex structures on~$X$.
When $\mathcal{D}$ is integrable (i.e.\ a holomorphic foliation), these 
structures are integrable and we have the following simple but remarkable fact.

\begin{proposition}Let $Z$ be a compact complex manifold equipped with
a holomorphic foliation $\mathcal{D}$ and let $f_t:X\to Z$,
$t\in[0,1]$, be an isotopy of transverse embeddings of a compact
smooth real manifold. Then all complex
structures $(X,J_{f_t})$ are biholomorphic to $(X,J_{f_0})$ through a 
smooth variation of diffeomorphisms in $\Diff_0(X)$, the identity 
component of the group $\Diff(X)$ of diffeomorphisms of~$X$.
\end{proposition}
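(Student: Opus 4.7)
The plan is to construct the required biholomorphisms by a flow argument that absorbs the tangential component of the variation $\partial_t f_t$, leaving only motion along leaves of $\mathcal{D}$. For each $(t,x)\in[0,1]\times X$, transversality yields a direct sum $T_{f_t(x)}Z=f_{t,*}T_xX\oplus\mathcal{D}_{f_t(x)}$, along which the tangent vector $\partial_t f_t(x)$ decomposes uniquely as
\[
\partial_t f_t(x)=f_{t,*}W_t(x)+B_t(x),\qquad W_t(x)\in T_xX,\quad B_t(x)\in\mathcal{D}_{f_t(x)},
\]
with $W_t$ and $B_t$ smooth in $(t,x)$. Since $X$ is compact, the time-dependent vector field $-W_t$ integrates to a smooth family $\psi_t:X\to X$ of diffeomorphisms with $\psi_0=\Id$, so the $\psi_t$ form a path in $\Diff_0(X)$.

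The crucial point is the identity
\[
\tfrac{d}{dt}\bigl(f_t\circ\psi_t\bigr)(x)=\partial_tf_t(\psi_t(x))-f_{t,*}W_t(\psi_t(x))=B_t(\psi_t(x))\in\mathcal{D}_{f_t(\psi_t(x))}.
\]
Consequently, for every $x\in X$ the curve $s\mapsto f_s(\psi_s(x))$ is tangent to $\mathcal{D}$ and, by Frobenius integrability, stays on a single leaf of $\mathcal{D}$. Pick a local holomorphic submersion $\sigma:\Omega\to\Omega'\subset\mathbb{C}^n$ realizing $\mathcal{D}$ as its fibers, as recalled in section~1. Then $\sigma$ is constant along such curves, so that
\[
\sigma\circ f_{t}\circ\psi_{t}\circ\psi_{t_0}^{-1}=\sigma\circ f_{t_0}
\]
on any open subset of $X$ over which the relevant trajectories stay in $\Omega$. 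Since the maps $\sigma\circ f_s$ are precisely the holomorphic charts defining $J_{f_s}$, this identity expresses $\psi_t\circ\psi_{t_0}^{-1}$ as the identity in the source chart for $J_{f_{t_0}}$ and target chart for $J_{f_t}$; it is therefore a local biholomorphism $(X,J_{f_{t_0}})\to(X,J_{f_t})$.

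To globalize, fix $x\in X$ and $t\in[0,1]$: the compact curve $\{f_s(\psi_s(x)):s\in[0,t]\}$ lies on one leaf and is covered by finitely many foliation charts, yielding a subdivision $0=t_0<t_1<\cdots<t_k=t$ on each subinterval of which the preceding local biholomorphism statement applies. Composing the resulting intermediate biholomorphisms shows that $\psi_t:(X,J_{f_0})\to(X,J_{f_t})$ is holomorphic at~$x$; since $x$ is arbitrary and $\psi_t$ is a diffeomorphism, it is a biholomorphism. The only genuine technical subtlety is this chart-gluing step, which is routine thanks to the compactness of $X$ and $[0,1]$; the conceptual heart of the argument is the choice of $-W_t$, which peels off the tangential part of $\partial_t f_t$ and forces the residual motion of $f_t\circ\psi_t$ to occur entirely along leaves.
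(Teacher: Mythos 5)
Your proof is correct. It rests on the same geometric fact as the paper's argument -- the local triviality of the foliation via holomorphic submersions $\sigma:\Omega\to\Omega'$, and the idea that the isotopy is compensated by ``sliding along the leaves'' -- but the construction is organized differently. The paper defines the comparison maps locally and directly as $\psi_{t,t_0}=(\sigma\circ f_t)^{-1}\circ(\sigma\circ f_{t_0})$ for $t$ near $t_0$, then must argue that these local definitions glue into a global diffeomorphism and extend to all $t,t_0$ by the chain rule and connectedness. You instead build the global object first: splitting $\partial_t f_t$ along $f_{t,*}TX\oplus\mathcal{D}$ and integrating $-W_t$ produces at once a smooth path $\psi_t$ in $\Diff_0(X)$ starting at the identity, and the holomorphy of $\psi_t$ is then verified afterwards from the identity $\sigma\circ f_t\circ\psi_t\circ\psi_{t_0}^{-1}=\sigma\circ f_{t_0}$, which characterizes the same leaf-following map the paper uses. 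What your route buys is that globality and membership in $\Diff_0(X)$ come for free from the ODE, with the only gluing left being the (pointwise, finitely subdivided) check of holomorphy along each leafwise trajectory; what the paper's route buys is that no velocity decomposition or flow is needed and the biholomorphism property is immediate from the definition. Both are complete; yours is arguably the cleaner way to obtain the \emph{smooth variation} in $\Diff_0(X)$ demanded by the statement.
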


\begin{proof} By an easy connectedness argument, it is enough to
produce a smooth variation of biholomorphisms $\psi_{t,t_0}:(X,J_{f_{t_0}})$ to
$(X,J_{f_t})$ when $t$ is close to $t_0$, and then extend these to all $t,t_0\in
[0,1]$ by the chain rule. Let $x\in X$. Thanks
to the local triviality of the foliation at $z_0=f_{t_0}(x)\in Z\smallsetminus
\mathcal{D}_{\sing}$, $\mathcal{D}$ is locally near $x$ the family of fibers
of a holomorphic submersion $\sigma:Z\supset\Omega\to\Omega'\subset
\mathbb{C}^n$ defined on a neighborhood $\Omega$ of~$z_0$. Then 
$\sigma\circ f_t:X\supset f_t^{-1}(\Omega)\to \Omega'$ is
by definition a local biholomorphism from $(X,J_{f_t})$ to $\Omega'$ 
(endowed with the standard complex structure of $\mathbb{C}^n$).
Now, $\psi_{t,t_0}=(\sigma\circ f_t)^{-1}\circ (\sigma\circ f_{t_0})$
defines a local biholomorphism from $(X,J_{f_{t_0}})$ to $(X,J_{f_t})$
on a small neighborhood of $x$, and these local biholomorphisms glue
together into a global one when $x$ and $\Omega$ vary 
(this biholomorphism consists of ``following the leaf of 
$\mathcal{D}$'' from the position $f_{t_0}(X)$ to the position $f_t(X)$ of
the embedding). Clearly $\psi_{t,t_0}$~depends smoothly
on $t$ and satisfies the chain rule $\psi_{t,t_0}\circ \psi_{t_0,t_1}=
\psi_{t,t_1}$.
\end{proof}

Therefore when $\mathcal D$ is a foliation, to any triple
$(Z,\mathcal{D},\alpha)$ where $\alpha$ is
an isotopy class of transverse embeddings \hbox{$X\to Z$}, one can attach a
point in the Teichm\"uller space $\mathcal{J}^{\rm int}(X)/\Diff_0(X)$
of integrable almost complex structures modulo biholomorphisms diffeotopic
to identity. The question raised by Bogomolov can then be stated more
precisely$\,$:

\begin{question} For any compact complex manifold $(X,J)$, does there 
exist a triple $(Z,\mathcal{D},X,\alpha)$ formed by a smooth complex 
projective variety $Z$, an algebraic foliation $\mathcal{D}$ on $Z$ and 
an isotopy class $\alpha$ of transverse embeddings $X\to Z$, such that
$J=J_f$ for some $f\in\alpha\,?$
\end{question}

The isotopy class of embeddings $X\to Z$ in a triple 
$(Z,\mathcal{D},\alpha)$ provides
some sort of ``algebraicization'' of a compact complex manifold, 
in the following sense$\,$:
\begin{lemma}\label{alg-lem}
There is an atlas of $X$ such that the transition functions are
solutions of algebraic linear equations (rather than plain algebraic
functions, as would be the case for usual algebraic varieties).
In this setting, the isotopy classes $\alpha$ 
are just ``topological classes'' belonging to a discrete countable set.
\end{lemma}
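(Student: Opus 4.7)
The construction is essentially visible in the preceding proposition; what remains is to make the algebraic structure of the transition functions explicit and to handle the cardinality of isotopy classes.

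First, I would cover $Z$ by Zariski affine open sets $\Omega_\lambda$ on each of which the algebraic distribution $\mathcal D$ is the common kernel of an algebraic Pfaffian system $\omega^{(\lambda)}_1=\cdots=\omega^{(\lambda)}_n=0$, with algebraic $1$-forms $\omega^{(\lambda)}_j$. By Frobenius, after shrinking to suitable polydisk neighborhoods $V_\lambda\subset\Omega_\lambda$, we obtain holomorphic rectifying submersions $\sigma_\lambda:V_\lambda\to\mathbb{C}^n$ whose components are first integrals of $\mathcal D$, i.e.\ each $d\sigma_\lambda^i$ lies in the algebraic span of $\omega^{(\lambda)}_1,\ldots,\omega^{(\lambda)}_n$. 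Transversality of $f$ then makes each $\phi_\lambda:=\sigma_\lambda\circ f|_{f^{-1}(V_\lambda)}$ a local $J_f$-holomorphic chart on $X$, exactly as in the previous proposition. On an overlap $U_\lambda\cap U_\mu$, the transition map $\phi_\mu\circ\phi_\lambda^{-1}$ is nothing but the local holonomy of $\mathcal D$ carrying the transversal slice $\sigma_\lambda(f(U_\lambda\cap U_\mu))$ to $\sigma_\mu(f(U_\lambda\cap U_\mu))$ along the leaves. Since every leaf is an integral manifold of the linear system $\omega^{(\lambda)}_j=0$ whose coefficients are algebraic in $z\in Z$, this holonomy is uniquely characterized as the germ of local diffeomorphism whose graph satisfies this first-order algebraic linear Pfaffian system. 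The graph itself is generically transcendental (already for the Kronecker foliation on a torus), but it \emph{is} a solution of algebraic linear equations in the sense just described, which is the first assertion of the lemma.

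For the second assertion, I would fix the underlying diffeomorphism type of $X$ and observe that the set of smooth embeddings $X\hookrightarrow Z$ transverse to $\mathcal D$ is open in the Fr\'echet manifold $\mathcal C^\infty(X,Z)$. Since $X$ is compact and $Z$ is a fixed finite-dimensional manifold, $\mathcal C^\infty(X,Z)$ is a separable metrizable space, hence has at most countably many path components; therefore the set of transverse isotopy classes is at most countable, and discrete for the quotient topology on $\pi_0$. The label $\alpha$ thus reduces to a purely topological (in fact, homotopical) invariant of the transverse embedding.

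The only conceptual obstacle is the correct interpretation of the phrase \emph{algebraic linear equations}: the transition functions are not algebraic in the usual polynomial sense, and one must read the phrase as referring to an algebraic first-order Pfaffian system rather than a polynomial equation. Once this interpretation is adopted, both halves of the lemma follow directly from Frobenius rectification applied to an algebraic foliation and from standard mapping-space considerations for compact manifolds.
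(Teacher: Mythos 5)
Your countability argument contains a false step as stated: a separable metrizable space need \emph{not} have countably many path components (the Cantor set, or any uncountable totally disconnected compact metric space, is a counterexample). What saves the argument here is that $\mathcal C^\infty(X,Z)$ for $X$ compact is not just separable metrizable but \emph{locally path-connected} (it is a Fr\'echet manifold, locally homeomorphic to a convex neighborhood of $0$ in $\mathcal C^\infty(X,f^*TZ)$), and the set of transverse embeddings is open in it. Hence its path components are open, and separability (Lindel\"of) then bounds their number by $\aleph_0$. You should also say a word about upgrading a continuous path in $\mathcal C^\infty(X,Z)$ to an isotopy that is jointly smooth in $(x,t)$, as the paper's definition requires; this is a routine mollification in $t$, but it is part of the claim. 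With these two repairs your soft argument is correct.

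It is, however, genuinely different from the paper's proof. The authors make no appeal to the topology of the mapping space: they fix a countable family of foliation charts $\Omega_\nu\simeq\Omega'_\nu\times\Omega''_\nu$ with geodesically convex plaques (enlarged by all balls with rational centers and radii), represent the image $M=f(X)$ as a finite collection of graphs $g_\nu:\Omega'_\nu\to\Omega''_\nu$ over a finite subfamily $I$, and then \emph{explicitly construct} an isotopy between any two transverse submanifolds carried by the same $I$, by interpolating the graphs one chart at a time along fiberwise geodesics with cut-off functions. The count is then (finite subsets of a countable set) $\times$ (mapping class group), which is countable. Their construction yields the stronger, concrete statement that the finite subfamily $I$ (together with the parametrization class in $\Diff(X)/\Diff_0(X)$) determines the isotopy class; your argument only gives countability. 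Your treatment of the first assertion --- reading ``solutions of algebraic linear equations'' as the statement that the transition maps are holonomy germs of $\mathcal D$, characterized by an algebraic linear Pfaffian system --- is consistent with the paper, which in fact offers no separate proof of that assertion beyond the chart construction already given in the introduction.
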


This set can be infinite as one already sees for real linear
embeddings of a real even dimensional torus $X=(\mathbb{R}/\mathbb{Z})^{2n}$
into a complex torus $Z=\mathbb{C}^N/\Lambda$ equipped with a linear 
foliation $\mathcal{D}$.

\begin{proof}We first cover $Z\smallsetminus
\mathcal{D}_{\sing}$ by a countable family of coordinate open sets
$\Omega_\nu\simeq\Omega'_\nu\times\Omega''_\nu$ such that the first projections
$\sigma_\nu:\Omega_\nu\to\Omega'_\nu\subset\mathbb{C}^n$ define the foliation.
We assume here $\Omega'_\nu$ and $\Omega''_\nu$ to be balls of 
sufficiently small radius, so that all fibers $z'\times\Omega''_\nu$
are geodesically convex with respect to a given hermitian metric on the
ambient manifold~$Z$, and the geodesic segment joining any two points in
those fibers is unique (of course, we mean here geodesics relative
to the fibers -- standard results of differential geometry guarantee that
sufficiently small coordinate balls will satisfy this property).
Then any nonempty intersection $\bigcap z'_j\times\Omega''_{\nu_j}$
of the fibers from various coordinate sets is still connected and 
geodesically convex.
We further enlarge the family with all smaller balls whose centers have
coordinates in $\mathbb{Q}[i]$ and radii in $\mathbb{Q}_+$, so that 
arbitrarily fine cove\-rings can be extracted from the family. 
A transverse embedding $f:X\to Z$ is characterized  by its image 
$M=f(X)$ up to right composition with an element $\psi\in\Diff(X)$, and 
thus, modulo isotopy, up to an element in the countable mapping 
class group $\Diff(X)/\Diff_0(X)$.
The image $M=f(X)$ is itself given by a finite collection of graphs
of maps $g_\nu:\Omega'_\nu\to\Omega''_\nu$ that glue together, for a certain
finite subfamily of coordinate sets $(\Omega_\nu)_{\nu\in I}$ extracted 
from the initial countable fa\-mily. However, any two such 
transverse submanifolds $(M_k)_{k=0,1}$ and associated collections of 
graphs $(g_{k,\nu})$ defined on the same finite subset $I$ are isotopic$\,$:
to see this, assume e.g.\ $I=\{1,2,\ldots,s\}$ and
fix even smaller products of balls $\widetilde\Omega_\nu\simeq
\widetilde\Omega'_\nu\times\widetilde\Omega''_\nu\Subset
\Omega_\nu$ still covering $M_0$ and $M_1$, and~a~cut-off function
$\theta_\nu(z')$ equal to $1$ on $\widetilde\Omega_\nu'$ and with support
in~$\Omega'_\nu$. Then we construct isotopies 
$(F_{t,k})_{t\in[0,1]}:M_0\to M_{t,k}$ step by step, for $k=1,2,\ldots,s$, 
by taking inductively graphs
of maps $(G_{t,k,\nu})_{t\in[0,1],\,k=1,...,s,\,\nu\in I}$ such that
\begin{eqnarray*}
&&M_{t,1}\quad\hbox{given by}~~
\left\{\kern-18pt\begin{matrix}
&&
G_{t,1,1}(z')=\gamma\big(t\theta_1(z')\,;\,g_{0,1}(z'),g_{1,1}(z')\big)
\quad\hbox{on}~~\Omega'_1,\hfill\\
\noalign{\vskip5pt}
&&G_{t,1,\nu}(z')=g_{0,\nu}(z')\quad\hbox{on}~~\sigma_\nu\big(
\Omega_\nu\smallsetminus(\Supp(\theta_1)\times\widetilde\Omega''_1)\big),~~
\nu\ne 1,\hfill
\end{matrix}\right.\\
\noalign{\vskip5pt}
&&M_{t,k}\quad\hbox{given by}~~
\left\{\kern-18pt\begin{matrix}
&&G_{t,k,k}(z')=\gamma\big(t\theta_k(z')\,;\,G_{t,k-1,k}(z'),
G_{t,k-1,k}(z')\big)\quad\hbox{on}~~\Omega'_k,\hfill\\
\noalign{\vskip5pt}
&&G_{t,k,\nu}(z')=G_{t,k-1,\nu}(z')\quad\hbox{on}~~\sigma_\nu\big(
\Omega_\nu\smallsetminus(\Supp(\theta_k)\times\widetilde\Omega''_k)\big),~~
\nu\ne k,\hfill
\end{matrix}\right.
\end{eqnarray*}
where $\gamma(t\,;\,a'',b'')$ denotes the geodesic segment between
$a''$ and $b''$ in each fiber $z'\times\Omega''_\nu$. By~construction, we have
$M_{0,k}=M_0$ and $M_{1,k}\cap U_k=M_1\cap U_k$ on 
\hbox{$U_k=\smash{\widetilde\Omega_1\cup\ldots\cup\widetilde\Omega_k}$}, thus
$f_t:=F_{t,s}:M_0\to M_t$ is a transverse isotopy between 
$M_0$ and $M_1$. Therefore, we have at most as many isotopy classes as the 
cardinal of the mapping class group, times the cardinal of the set of finite
subsets of a countable~set, which is still countable.
\end{proof}
\vskip1mm
Of course, when $\mathcal{D}$ is non integrable, the almost complex structure
$J_{f_t}$ will in general vary under isotopies. One of the goals of the next 
sections is to investigate this phenomenon, but in this section we study
further some integrable examples.

\begin{example}[Complex tori] {\rm Let $X=\mathbb{R}^{2n}/\mathbb{Z}^{2n}$
be an even dimensional real torus and \hbox{$Z=\mathbb{C}^N/\Lambda$} a 
complex torus where $\Lambda\simeq\mathbb{Z}^{2N}$ is a lattice 
of~$\mathbb{C}^N$, $N>n$.
Any complex vector subspace $\mathcal{D}\subset\mathbb{C}^N$ of codimension
$n$ defines a linear foliation on $Z$ (which may or may not have closed 
leaves, but for $\mathcal{D}$ generic, the leaves are everywhere dense). Let
$f:X\to Z$ be a linear embedding transverse to $\mathcal{D}$. Here, there 
are countably many distinct isotopy classes of such linear embeddings, 
in fact up to a translation, $f$ is induced by a $\mathbb{R}$-linear map
$u:\mathbb{R}^{2n}\to\mathbb{C}^N$ that sends the standard basis 
$(e_1,\ldots,e_{2n})$ of $\mathbb{Z}^{2n}$ to a unimodular 
system of $2n$ $\mathbb{Z}$-linearly independent vectors
$(\varepsilon_1,\ldots,\varepsilon_{2n})$ of $\Lambda$. Such 
$(\varepsilon_1,\ldots,\varepsilon_{2n})$ can be chosen to 
generate any $2n$-dimensional
$\mathbb{Q}$-vector subspace $V_\varepsilon$ of $\Lambda\otimes\mathbb{Q}
\simeq\mathbb{Q}^{2N}$, thus the permitted directions for $V_\varepsilon$ 
are dense, and for most of them $f$ is indeed transverse to $\mathcal{D}$. 
For a transverse linear embedding, we get a $\mathbb{R}$-linear 
isomorphism $\tilde u:\mathbb{R}^{2n}\to\mathbb{C}^N/\mathcal{D}$, 
and the complex 
structure $J_f$ on $X$ is precisely the one induced by that isomorphism
by pulling-back the standard complex structure on the quotient.
For $N\ge 2n$, we claim that all possible translation invariant 
complex structures on $X$ are obtained. In fact, we can then choose
the lattice vector images $\varepsilon_1,\ldots,\varepsilon_{2n}$
to be $\mathbb{C}$-linearly independent, so that the map
$u:\mathbb{Z}^{2n}\to\Lambda$, $e_j\mapsto\varepsilon_j$ extends into an
injection $v:\mathbb{C}^{2n}\to \mathbb{C}^N$. Once this is done,
the isotopy class of embedding is determined, and a translation invariant 
complex structure $J$ on $X$ is given by a direct sum decomposition
$\mathbb{C}^{2n}=S\oplus\overline S$ with $\dim_{\mathbb{C}}S=n$
(and $\overline S$ the complex congugate of $S$). What we need is
that the composition $\tilde v:\mathbb{C}^{2n}\to\mathbb{C}^N
\to\mathbb{C}^N/\mathcal{D}$ defines
a $\mathbb{C}$-linear isomorphism of $S$ onto $\tilde v(S)\subset
\mathbb{C}^N/\mathcal{D}$ and $\tilde v(\overline S)=\{0\}$, i.e.\
$\mathcal{D}\supset v(\overline S)$ and $\mathcal{D}\cap v(S)=\{0\}$.
The solutions are obtained by taking $\mathcal{D}=v(\overline S)\oplus H$,
where $H$ is any supplementary subspace of $v(S\oplus\overline S)$ in
$\mathbb{C}^N$ (thus the choice of $\mathcal{D}$ is unique if $N=2n$, and 
parametrized by an affine chart of a Grassmannian $G(N-n,N-2n)$ if $N>2n$).
Of course, we can take here $Z$ to be an Abelian variety -- even a 
simple Abelian variety if we wish.}
\end{example}

\begin{example}[LVMB manifolds] {\rm We refer to L\'opez de Medrano-Verjovsky 
\cite{LoV97}, Meersseman \cite{meersseman} and Bosio \cite{bosio} for 
the original constructions, and sketch here the more general definition
given in \cite{bosio} (or rather an equivalent one, with very minor changes
of notation). Let $m\geq 1$ and $N\ge 2m$ be integers, and let 
${\mathcal E}={\mathcal E}_{m,N+1}$ be a non empty set of subsets of 
cardinal $2m+1$ of $\{0,1,\ldots,N\}$. For
$J\in{\mathcal E}$, define $U_J$ to be the open set of
points $[z_0:\ldots:z_N]\in{\mathbb CP}^N$ such that 
$z_j\ne 0$ for $j\in J$ and $U_{\mathcal E}=\bigcup_{J\in{\mathcal E}}U_J$. Then, consider the action of ${\mathbb C}^m$ on $U_{\mathcal E}$ given by
$$
w\cdot[z_0:\ldots:z_N]=\big[e^{\ell_0(w)}z_0:\ldots:e^{\ell_N(w)}z_N\big]
$$
where $\ell_j\in({\mathbb C}^m)^*$ are complex linear forms ${\mathbb C}^m\to{\mathbb C}$, $0\le j\le N$. Then Bosio proves (\cite{bosio}, Th\'eor\`eme 1.4), 
that the space of orbits $X=U_{\mathcal E}/{\mathbb C}^m$ is a compact complex manifold of dimension $n=N-m$ if and only if the following two combinatorial conditions are met:
\begin{itemize}
\item[\rlap{\kern-1cm\rm(i)}] for any two sets $J_1,\,J_2\in{\mathcal E}$, the
convex envelopes in $({\mathbb C}^m)^*$ of $\{\ell_j\}_{j\in J_1}$ and
$\{\ell_j\}_{j\in J_2}$ overlap on some non empty open set.
\vskip2mm
\item[\rlap{\kern-1cm\rm(ii)}] for all $J\in{\mathcal E}$ and 
$k\in\{0,\ldots,N\}$, there exists $k'\in J$ such that 
$(J\smallsetminus\{k'\})\cup\{k\}\in{\mathcal E}$.
\end{itemize}
The above action can be described in terms of $m$ pairwise commuting 
Killing vector fields of the action of ${\rm PGL}(N+1,{\mathbb C})$
on $\mathbb{CP}^N$, given by
$$
\zeta_j=\sum_{k=0}^N\lambda_{jk}z_k\frac{\partial}{\partial z_k},\qquad
\lambda_{jk}=\frac{\partial \ell_k}{\partial w_j},\quad 1\le j\le m.
$$
These vector fields generate a foliation ${\mathcal F}$ of dimension
$m$ on $\mathbb{CP}^N$ that is non singular over~$U_{\mathcal E}$.
Under the more restrictive condition defining LVM manifolds, it
follows from \cite{meersseman} that $X$ can be embedded as a smooth
compact real analytic submanifold $M$ in $U_{\mathcal E}$ that is
transverse to~${\mathcal F}\,$; such a submanifold $M$ is realized as
the transverse intersection of hermitian quadrics
\hbox{$\sum_{0\le k\le N}\lambda_{j,k}|z_k|^2=0$}, $1\le j\le m$ (this
actually yields $2m$ real conditions by taking real and imaginary
parts). In the more general case of LVMB manifolds, Bosio has observed
that $X$ can also be embedded smoothly in
$U_{\mathcal E}\subset \mathbb{CP}^N$ (see \cite{bosio}, Prop.~2.3 and
discussion thereafter; and also \cite{bosio-meer}, Part~III,
section~12).  }
\end{example}

\section{Deformation of transverse embeddings}\label{deform-section}

Let $f : X \to (Z,\mathcal D)$ be a transverse embedding. Then
$J_f:=f^*(J_{f(X)}^{Z,\mathcal D})$ defines an almost complex
structure on $X$. We give in this section sufficient conditions on the
embedding $f$ that ensure that small deformations of $J_f$, in a
suitable space of almost complex structures on $X$, are given as
$J_{\tilde{f}}$ where $\tilde{f}$ are small deformations of $f$ in a
suitable space of transverse embeddings of $X$ into $(Z,\mathcal{D})$.
Since the implicit function theorem will be needed, we have to
introduce various spaces of $\mathcal{C}^r$
mappings. For any $r\in [1,\infty]$, we consider the 
group $\Diff^r(X)$ of diffeomorphisms of $X$ of class~$\mathcal{C}^r$, and
$\Diff^r_0(X)\subset\Diff^r(X)$ the subgroup of diffeomorphisms
diffeotopic to identity. When $r=s+\gamma$ is not an integer, 
$s=\lfloor r\rfloor$, then $\mathcal{C}^r$ denotes the H\"older space of maps of class $\mathcal{C}^s$ with derivatives of order $s$ that are
H\"older continuous with exponent $\gamma$. Similarly, we consider the space
$\mathcal{C}^r(X, Z)$ of $\mathcal{C}^r$ mappings \hbox{$X\to Z$} equipped
with $\mathcal{C}^r$ convergence topology (of course, in $\Diff^r(X)$,
the topology also requires convergence of sequences $f_\nu^{-1}$). If 
$Z$ is Stein, there exists a biholomorphism \hbox{$\Phi:TZ\to Z\times Z$}
from a neighborhood of the zero section of $TZ$ to a neighborhood of the
diagonal in $Z\times Z$, such that $\Phi(z,0)=z$ and
$d_\zeta\Phi(z,\zeta)_{|\zeta=0}=\Id$ on $T_zZ$. When $Z$ is embedded
in $\mathbb{C}^{N'}$ for some $N'$, such a map can be obtained by
taking $\Phi(z,\zeta)=\rho(z+\zeta)$, where $\rho$ is a local
holomorphic retraction $\mathbb{C}^{N'}\to Z$ and $T_zZ$ is identified
to a vector subspace of $\mathbb{C}^{N'}$. In general (i.e.\ when $Z$
is not necessarily Stein), one can still find a $\mathcal{C}^\infty$
or even real analytic map $\Phi$ satisfying the same conditions, by 
taking e.g.\ $\Phi(z,\zeta)=(z,\exp_z(\zeta))$, where $\exp$ is the Riemannian
exponential map of a real analytic hermitian metric on~$Z\,$; actually, we
will not need $\Phi$ to be holomorphic in what follows.

\begin{lemma}\label{banach-isom1} For $r\in [1,\infty[\,$, 
$\mathcal{C}^r(X, Z)$ is a Banach manifold whose tangent space at a point
$f:X\to Z$ is $\mathcal{C}^r(X,f^*TZ)$, and $\Diff^r_0(X)$  is
a ``Banach Lie group'' with ``Lie algebra'' $\mathcal{C}^r(X,TX)$ $[$the
quotes meaning that the composition law is not real analytic as one would
expect, but merely continuous and differentiable at $\Id_X$, though 
the underlying manifold is indeed a Banach manifold$\,]$.
\end{lemma}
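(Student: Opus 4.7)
The plan is to use the tubular-neighbourhood map $\Phi:TZ\supset\mathcal{U}\to Z\times Z$ introduced just before the lemma to build charts on $\mathcal{C}^r(X,Z)$. Given $f\in\mathcal{C}^r(X,Z)$, I set
$$
\Psi_f:V_f\longrightarrow\mathcal{C}^r(X,Z),\qquad
\Psi_f(\zeta)(x)=\Phi_2\bigl(f(x),\zeta(x)\bigr),
$$
where $\Phi=(\pi_1,\Phi_2)$ and $V_f\subset\mathcal{C}^r(X,f^*TZ)$ is the open neighborhood of the zero section consisting of sections $\zeta$ whose image lies in $\mathcal{U}$. Since $d_\zeta\Phi_2(z,0)=\mathrm{Id}$, the pointwise inverse of $\Phi$ provides an inverse for $\Psi_f$ on a $\mathcal{C}^r$-neighborhood of $f$, so each $\Psi_f$ is a bijection from $V_f$ onto an open subset of $\mathcal{C}^r(X,Z)$.

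Next I check that two such parametrizations $\Psi_f$, $\Psi_{\tilde f}$ have a smooth transition. The change of chart $\Psi_{\tilde f}^{-1}\circ\Psi_f$ has the form $\zeta\mapsto\bigl(x\mapsto F_x(\zeta(x))\bigr)$, where the fiber map $F_x:T_{f(x)}Z\to T_{\tilde f(x)}Z$ is defined implicitly by $\Phi_2(\tilde f(x),F_x(v))=\Phi_2(f(x),v)$ and is smooth in $(x,v)$ by the implicit function theorem. Smoothness of such a pointwise substitution viewed as a map between Banach spaces of $\mathcal{C}^r$ sections is the standard \emph{$\omega$-lemma} (Palais, Ebin--Marsden), valid both for integer $r$ and for $r=s+\gamma$ under the H\"older convention adopted in the paper. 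This gives the Banach manifold atlas on $\mathcal{C}^r(X,Z)$; because $d\Psi_f(0)=\mathrm{Id}$, the tangent space at $f$ is identified with $\mathcal{C}^r(X,f^*TZ)$.

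The case of $\Diff^r(X)$ follows by specialising to $Z=X$. Since $X$ is compact and $r\ge 1$, a $\mathcal{C}^1$-small perturbation of a diffeomorphism is still a local diffeomorphism by the inverse function theorem, and a covering-degree argument on the connected compact manifold $X$ upgrades this to global injectivity; hence $\Diff^r(X)$ is \emph{open} in $\mathcal{C}^r(X,X)$ and inherits the Banach manifold structure. The identity component is $\Diff^r_0(X)$, and its tangent space at $\Id_X$ is $\mathcal{C}^r(X,\Id_X^*TX)=\mathcal{C}^r(X,TX)$; flowing a small $\mathcal{C}^r$ vector field produces a concrete local section $\mathcal{C}^r(X,TX)\to\Diff^r_0(X)$ near the origin.

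The main obstacle --- and the reason for the quotation marks around ``Banach Lie group'' and ``Lie algebra'' --- is the classical loss of derivatives in composition. Formal differentiation of $(f,g)\mapsto f\circ g$ with respect to $g$ produces $df\circ g$, which demands one extra order of regularity on $f$; consequently composition is not $\mathcal{C}^1$ from $\Diff^r(X)\times\Diff^r(X)$ to $\Diff^r(X)$, only continuous. What does survive, and what I would record carefully to conclude the proof, is: (i) for a fixed $\mathcal{C}^r$-diffeomorphism $g$, right composition $R_g:f\mapsto f\circ g$ is smooth, since in the charts $\Psi_f$, $\Psi_{f\circ g}$ above it reduces to the bounded linear operator $\zeta\mapsto\zeta\circ g$ on $\mathcal{C}^r$-sections; and (ii) composition is Fr\'echet differentiable at $(\Id_X,\Id_X)$ with derivative $(\delta f,\delta g)\mapsto \delta f+\delta g$, and inversion is differentiable at $\Id_X$ with derivative $-\mathrm{Id}$. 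These two properties --- standard in the Ebin--Marsden / Omori framework for manifolds of mappings --- are exactly what the weakened ``Banach Lie group'' statement asserts.
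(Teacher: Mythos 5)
Your proposal is correct and follows essentially the same route as the paper's (very terse) proof: both use the map $\Phi$ to parametrize maps $\mathcal{C}^r$-close to $f$ by small $\mathcal{C}^r$ sections of $f^*TZ$, and treat $\Diff^r_0(X)$ by the analogous argument. You merely supply the details the paper leaves implicit (the $\omega$-lemma for chart transitions, openness of $\Diff^r(X)$ in $\mathcal{C}^r(X,X)$, and the exact differentiability properties of composition that justify the quotation marks), so no further comparison is needed.
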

Let us also point out that if the composition of $\mathcal{C}^r$ maps 
is merely $\mathcal{C}^{r^2}$ for $0 < r < 1$, it is actually a 
$\mathcal{C}^r$ map for $r \geq 1$.  

\begin{proof} A use of the map $\Phi$ allows us to parametrize small
deformations of the embedding $f$ as $\widetilde
f(x)=\Phi(f(x),u(x))$ [or equivalently
$u(x)=\Phi^{-1}(f(x),\widetilde f(x))\,$], where $u$ is a smooth
sufficiently small section of $f^*TZ$. This parametrization is
one-to-one, and $\widetilde f$ is $\mathcal{C}^r$ if and only if $u$
is $\mathcal{C}^r$ (provided $f$ is). The argument is similar, and
very well known indeed, for~$\Diff^r_0(X)$.
\end{proof}

Now, let $\mathcal{J}^r(X)$ denote the space of almost complex structures of
class $\mathcal{C}^r$ on~$X$. For $1 \leq r < +\infty$, this is a Banach
mani\-fold whose tangent space at a point $J$ is the space of sections
$h\in\mathcal{C}^r(X,\End_{\overline{\mathbb{C}}}(TX))$ satisfying
$J\circ h+h\circ J=0$ (namely conjugate $\mathbb{C}$ linear
endomorphisms of~$TX$). There is a natural right action of
$\Diff^r_0(X)$ on $\mathcal{J}^{r-1}(X)$ defined by
$$
(J,\psi)\mapsto \psi^*J,\qquad \psi^*J(x)=d\psi(x)^{-1}\circ 
J(\psi(x))\circ d\psi(x).
$$
As is well-known and as a standard calculation shows, the differential of 
$\psi\mapsto \psi^*J$ at $\psi=\Id_X$ is closely related to the 
$\overline\partial_J$ operator
$$
\overline\partial_J:\mathcal{C}^r(X,TX)\longrightarrow
\mathcal{C}^{r-1}(X,\Lambda^{0,1}TX^*\otimes TX^{1,0})=
\mathcal{C}^{r-1}(X,\End_{\overline{\mathbb{C}}}(TX)),
$$
namely it is given by $v\mapsto J\circ dv -dv \circ J=2J\,
\overline\partial_Jv$, if $v\in\mathcal{C}^r(X,TX)$ is the 
infinitesimal variation of~$\psi$.

Let $\Gamma^r(X,Z,\mathcal{D})$ be the space of $\mathcal{C}^r$
embeddings of $X$ into $Z$ that are transverse to
$\mathcal{D}$. Transversality is an open condition, so
$\Gamma^r(X,Z,\mathcal{D})$ is an open subset in $\mathcal{C}^r(X,Z)$.
Now, $\Diff^r_0(X)$ acts on $\Gamma^r(X,Z,\mathcal{D})$ through the 
natural right action
$$\Gamma^r(X,Z,\mathcal{D})\times\Diff^r_0(X)\longrightarrow
\Gamma^r(X,Z,\mathcal{D}),\qquad(f,\psi)\mapsto f\circ\psi.$$

We wish to consider the differential of this action at point
$(f,\psi)$, $\psi=\Id_X$, with respect to the tangent space
isomorphisms of Lemma~\ref{banach-isom1}. This is just the addition
law in the bundle~$f^*TZ$:
$$
\mathcal{C}^r(X,f^*TZ)\times\mathcal{C}^r(X,TX)\to 
\mathcal{C}^{r-1}(X,f^*TZ),\qquad(u,v)\mapsto u+f_*v.
$$
A difficulty occurring here is the loss of regularity from $\mathcal{C}^r$
to $\mathcal{C}^{r-1}$ coming from the differentiations of $f$ and $v$. 
To overcome this difficulty, we have to introduce a slightly smaller
space of transverse embeddings.
%that will make possible to apply the implicit function theorem without trouble.

\begin{definition}For $r \in [1,+\infty] \cup \{\omega\}$ we consider the space
$$
\widetilde\Gamma^r(X,Z,\mathcal{D})\subset\Gamma^r(X,Z,\mathcal{D})
\subset \mathcal{C}^r(X,Z)
$$
of transverse embeddings $f:X\to Z$ such that $f$ is of class 
$\mathcal{C}^r$ as well as all ``transverse'' derivatives $h\cdot df$,
where $h$ runs over conormal holomorphic $1$-form with values in 
$(TZ/\mathcal{D})^*$. When $r=\infty$ or $r=\omega$ $($real analytic 
case$)$, we put $\smash{\widetilde\Gamma^r}(X,Z,\mathcal{D})=
\Gamma^r(X,Z,\mathcal{D})$.
\end{definition}
Then $\widetilde \Gamma^r(X,Z,\mathcal{D})$ satisfies the following conditions:
\begin{proposition}\label{prop-gamma-tilde-banach-1}
For $1 \leq r <\infty$, we have
\begin{itemize}
\item[\rlap{\kern-1cm\rm(i)}] The group $\Diff_0^{r+1}(X)$ acts on the right on 
$\widetilde\Gamma^r(X,Z,\mathcal{D})\,;$
\vskip0.2cm
\item[\rlap{\kern-1cm\rm(ii)}] The space $\widetilde\Gamma^r(X,Z,\mathcal{D})$ is a Banach manifold whose tangent space at a point \hbox{$f:X\to Z$} is
$
\mathcal{C}^r(X,f^*\mathcal{D})\oplus \mathcal{C}^{r+1}(X,TX).
$
\end{itemize}
\end{proposition}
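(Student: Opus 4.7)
The plan is to verify (i) by a direct regularity computation, and to establish (ii) by producing explicit local Banach-manifold charts that exploit the extra derivative in the transverse direction built into the definition of $\widetilde\Gamma^r$.

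For (i), let $f\in\widetilde\Gamma^r(X,Z,\mathcal{D})$ and $\psi\in\Diff_0^{r+1}(X)$. The composition $f\circ\psi$ is of class $\mathcal{C}^r$ since $\mathcal{C}^r\circ\mathcal{C}^{r+1}\subset\mathcal{C}^r$, and transversality to $\mathcal{D}$ is preserved under reparametrization of the source because $d(f\circ\psi)(T_xX)=df\bigl(T_{\psi(x)}X\bigr)$ via the isomorphism $d\psi(x)$. For any holomorphic conormal $1$-form $h$ with values in $(TZ/\mathcal{D})^*$, the identity $(f\circ\psi)^*h=\psi^*(f^*h)$ combined with $f^*h\in\mathcal{C}^r$ (by hypothesis) and $\psi\in\mathcal{C}^{r+1}$ yields $(f\circ\psi)^*h\in\mathcal{C}^r$. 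Associativity is that of composition.

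For (ii), the guiding idea is that the $\widetilde\Gamma^r$ condition secretly promotes the transverse component of $f$ to class $\mathcal{C}^{r+1}$, which provides the extra derivative the parametrization demands. In local holomorphic coordinates $(z',z'')\in\mathbb{C}^n\times\mathbb{C}^{N-n}$ on $Z$ adapted to $\mathcal{D}=\ker dz'$, the conormal bundle $(TZ/\mathcal{D})^*$ is spanned by $dz'_1,\ldots,dz'_n$, so the condition $h\cdot df\in\mathcal{C}^r$ for all such $h$ forces $df'\in\mathcal{C}^r$, i.e.\ $f'\in\mathcal{C}^{r+1}$, whereas $f''$ is only $\mathcal{C}^r$. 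Transversality of $f$ makes $f'$ a local $\mathcal{C}^{r+1}$-diffeomorphism. Given $\widetilde f\in\widetilde\Gamma^r$ sufficiently close to $f$, with local decomposition $(\widetilde f',\widetilde f'')$, I set
$$
\psi:=(f')^{-1}\circ\widetilde f'\in\Diff_0^{r+1}(X),\qquad u:=\Phi^{-1}\bigl(f\circ\psi,\,\widetilde f\,\bigr).
$$
By construction $f\circ\psi$ and $\widetilde f$ have identical transverse parts, so $u$ takes values in $\mathcal{D}$ and is of class $\mathcal{C}^r$. After identifying $(f\circ\psi)^*\mathcal{D}$ with $f^*\mathcal{D}$ via the $\mathcal{C}^{r+1}$-diffeomorphism $\psi$, the correspondence $\widetilde f\longleftrightarrow(\psi,u)$ becomes a bijection between a neighborhood of $f$ in $\widetilde\Gamma^r(X,Z,\mathcal{D})$ and a product of neighborhoods of $\Id_X$ in $\Diff_0^{r+1}(X)$ and of the zero section in $\mathcal{C}^r(X,f^*\mathcal{D})$. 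Linearizing at the origin yields the claimed tangent space $\mathcal{C}^{r+1}(X,TX)\oplus\mathcal{C}^r(X,f^*\mathcal{D})$.

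Transition functions between two such charts centered at different base embeddings are smooth, reducing to composition with a fixed $\mathcal{C}^{r+1}$-diffeomorphism together with smooth operations on $f^*\mathcal{D}$, so $\widetilde\Gamma^r(X,Z,\mathcal{D})$ inherits a genuine Banach manifold structure. The main technical obstacle will be the regularity bookkeeping in the extraction $\widetilde f\mapsto(\psi,u)$: recovering a reparametrization from a map of class $\mathcal{C}^r$ would in general yield only a $\mathcal{C}^r$-diffeomorphism, losing the extra derivative we need, and it is precisely the definition of $\widetilde\Gamma^r$ that must be invoked to bump $\psi$ back up to $\mathcal{C}^{r+1}$ via the inverse of the local diffeomorphism $f'$. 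A secondary verification is the coordinate-independence of the construction, which holds because biholomorphic changes of coordinates on $Z$ preserve the conormal bundle $(TZ/\mathcal{D})^*$ and therefore the asymmetric regularity split of $f$ into its transverse and $\mathcal{D}$-valued parts.
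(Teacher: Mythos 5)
Your argument for part (i) is correct and in fact supplies the details the paper leaves implicit: the chain rule $h\cdot d(f\circ\psi)=\big((h\cdot df)\circ\psi\big)\cdot d\psi$ together with $d\psi\in\mathcal{C}^r$ is exactly why one must take $\psi\in\Diff_0^{r+1}(X)$ rather than $\Diff_0^{r}(X)$.

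Part (ii), however, rests on a false regularity claim. For a non-integrable distribution you cannot choose holomorphic coordinates in which $\mathcal{D}=\ker dz'$ on a neighborhood --- $\ker dz'$ is integrable --- and the normalization $\mathcal{D}_{z_0}=\Span(\partial/\partial z_j)_{n+1\le j\le N}$ holds only at the single point $z_0$. On a neighborhood, $\mathcal{D}$ is spanned by $e_j=\partial/\partial z_j+\sum_{i\le n} a_{ij}(z)\,\partial/\partial z_i$ with $a(z_0)=0$ but $a\not\equiv 0$, so the conormal forms are $h_i=dz_i-\sum_{j>n}a_{ij}(z)\,dz_j$, not $dz_i$. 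The defining condition of $\widetilde\Gamma^r$ therefore upgrades only the combinations $df_i-\sum_{j>n} a_{ij}(f)\,df_j$ to class $\mathcal{C}^r$; since $a_{ij}(f)\in\mathcal{C}^r$ while $df_j\in\mathcal{C}^{r-1}$, the component $df_i$ itself remains only $\mathcal{C}^{r-1}$, i.e.\ $f'$ is $\mathcal{C}^r$ and in general \emph{not} $\mathcal{C}^{r+1}$. (Concretely: flow a smooth transverse embedding along a $\mathcal{C}^r$ but non-$\mathcal{C}^{r+1}$ section $\tilde u$ of $\mathcal{D}$; the result lies in $\widetilde\Gamma^r$, yet its transverse components acquire terms $\int_0^1 \sum_k a_{ik}(f_t)\tilde u_k(f_t)\,dt$ that are merely $\mathcal{C}^r$.) Consequently $\psi:=(f')^{-1}\circ\widetilde f'$ is only a $\mathcal{C}^r$ diffeomorphism, your chart fails to extract a $\Diff_0^{r+1}(X)$ factor, and the model space it would produce is $\mathcal{C}^r(X,f^*\mathcal{D})\oplus\mathcal{C}^r(X,TX)$ rather than the asserted one. (A secondary issue: equality of the $z'$-components of $f\circ\psi$ and $\widetilde f$ places $u$ in $\{0\}\times\mathbb{C}^{N-n}$, which coincides with $\mathcal{D}_z$ only at $z=z_0$.) The paper avoids any such projection by running the chart in the opposite direction: given $(u,v)$ it produces $g_1=(f_t\circ\psi_t)_{|t=1}$, where $f_t$ is the flow of a $\mathcal{C}^r$ extension $\tilde u$ of $u$ with values in $\mathcal{D}$ and $\psi_t$ is the flow of $v$; the key computation $\frac{d}{dt}(e_j^*\circ df_t)=0$ shows that flowing along $\mathcal{D}$ preserves the transverse regularity. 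To salvage a ``decomposition'' chart in the non-integrable case you would have to build the reparametrization from the quantities $f^*h$ rather than from the raw coordinate components of $f$.
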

\noindent
\begin{proof}
Part (i) is clear since $\Diff^{r+1}_0(X)$ acts on $\widetilde \Gamma^r(X,Z,\mathcal{D})$ through the natural right action
$$\widetilde \Gamma^r(X,Z,\mathcal{D})\times\Diff^{r+1}_0(X)\longrightarrow
\widetilde \Gamma^r(X,Z,\mathcal{D}),\qquad(f,\psi)\mapsto f\circ\psi.$$

For Part (ii), pick $f\in\smash{\widetilde\Gamma^r}(X,Z,\mathcal{D})$, 
$u\in \mathcal{C}^r(X,f^*\mathcal{D})$ and  $v\in\mathcal{C}^{r+1}(X,TX)$.
The flow of $v$ yields a family of diffeomorphisms $\psi_t\in\Diff_0^{r+1}(X)$
with $\psi_0=\Id_X$ and $\dot\psi_{t|t=0}=v$ (in the
sequel, all derivatives $\smash{\frac{d}{dt}_{|t=0}}$ will be indicated 
by a dot). Now, fix $\tilde u\in
\mathcal{C}^r(Z,\mathcal{D})$ such that $u=\tilde u\circ f$, by extending
the $\mathcal{C}^r$ vector field $f_*u$ from $f(X)$ to $Z$. The extension 
mapping $u\mapsto\tilde u$ can be chosen to be a continuous linear map
of Banach spaces, using e.g.\ a retraction from a tubular neighborhood
of the $\mathcal{C}^r$ submanifold $f(X)\subset Z$. Let $f_t$ be the flow
of $\tilde u$ starting at $f_0=f$ i.e.\ such that $\frac{d}{dt}f_t
=\tilde u(f_t)$.
Let $(e_j)_{1\le j\le N}$  be a local holomorphic frame of $TZ$ such
that $(e_j)_{n+1\le j\le N}$ is a holomorphic frame of $\mathcal{D}$,
$(e_j^*)$ its dual frame and $\nabla$ the unique local holomorphic 
connection of $TZ$ such that $\nabla e_j=0$. For $j=1,\ldots,n$, we find
$$
\frac{d}{dt}(e_j^*\circ df_t)
=e_j^*(f_t)\circ \nabla\frac{df_t}{dt}
=e_j^*(f_t)\circ \nabla(\tilde u(f_t))
=e_j^*(f_t)\circ (\nabla\tilde u)(f_t)\cdot df_t.
$$
However, if we write $\tilde u=\sum_{n+1\le k\le N}\tilde u_ke_k$ we see
that the composition vanishes since
$e_j^*e_k=0$. Therefore $\frac{d}{dt}(e_j^*\circ df_t)=0$ and
$e_j^*\circ df_t=e_j^*(f)\circ df\in\mathcal{C}^r(X)$. This shows that
$f_t\in \smash{\widetilde\Gamma^r}(X,Z,\mathcal{D})$ for all $t$, and
by definition we have $\dot f_t=\tilde u\circ f=u$. Now, if we define
$g_t=f_t\circ\psi_t$, we find $g_t\in \smash{\widetilde\Gamma^r}
(X,Z,\mathcal{D})$ by (i), and $\dot g_t=u+f_*v$ since $\dot\psi_t=v$.
The mapping $(u,v)\mapsto g_1=(f_t\circ\psi_t)_{|t=1}$ defines a local
``linearization'' of $\smash{\widetilde\Gamma^r}(X,Z,\mathcal{D})$ near $f$.
\end{proof}

We may consider now the differential of this action at point $(f,\psi)$, $f \in \widetilde \Gamma^r(X,Z,\mathcal{D})$ and $\psi=\Id_X$.
If we restrict $u$ to be in $\mathcal{C}^r(X,f^*\mathcal{D})$, we actually
get an isomorphism of Banach spaces
\begin{equation}\label{banach-isom3}
\mathcal{C}^r(X,f^*\mathcal{D})\times\mathcal{C}^r(X,TX)\to 
\mathcal{C}^r(X,f^*TZ),\qquad(u,v)\mapsto u+f_*v
\end{equation}
by the transversality condition. In fact, we can (non canonically)
define on $\widetilde \Gamma^r(X,Z,\mathcal{D})$ a ``lifting''
$$
\Phi(f,\bu):\mathcal{C}^r(X,f^*\mathcal{D})\to \tilde\Gamma^r(X,Z,\mathcal{D}),
\qquad u\mapsto \Phi(f,u)
$$
on a small neighborhood of the zero section, and the differential of
$\Phi(f,\bu)$ at $0$ is given by the inclusion
$\mathcal{C}^r(X,f^*\mathcal{D})\hookrightarrow
\mathcal{C}^r(X,f^*TZ)$. Modulo composition by
elements of $\Diff^{r+1}_0(X)$ close to identity (i.e.\ in the quotient
space $\widetilde \Gamma^r(X,Z,\mathcal{D})/\Diff^{r+1}_0(X)$), small deformations of
$f$ are parametrized by $\Phi(f,u)$ where $u$ is a small section of
$\mathcal{C}^r(X,f^*\mathcal{D})$. The first variation of $f$ depends
only on the differential of $\Phi$ along the zero section of $TZ$, so
it is actually independent of the choice of our map~$\Phi$. We can
think of small variations of $f$ as $f+u$, at least if we are working
in local coordinates $(z_1,\ldots,z_N)\in\mathbb{C}^N$ on~$Z$, and
consider that $\mathcal{D}_z\subset T_zZ=\mathbb{C}^N\,$; the use of a
map $\Phi$ like those already considered is however needed to make the
arguments global. Let us summarize these observations as follows.

\begin{lemma}\label{banach-isom2}For $ 1 \leq r<+\infty$, the quotient space 
$\widetilde \Gamma^r(X,Z,\mathcal{D})/\Diff^{r+1}_0(X)$ is a Banach manifold whose tangent
space at $f$ can be identified with $\mathcal{C}^r(X,f^*\mathcal{D})$
via the differential of the composition
$$\mathcal{C}^r(X,f^*\mathcal{D})~~
\mathop{\longrightarrow}^{\Phi(f,\bu)}~~\tilde\Gamma^r(X,Z,\mathcal{D})
\longrightarrow
\tilde\Gamma^r(X,Z,\mathcal{D})/\Diff^{r+1}_0(X)$$
at $0$, where the first arrow is given by $u\mapsto\Phi(f,u)$ and the second
arrow is the natural map to the quotient.\qed
\end{lemma}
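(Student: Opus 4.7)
The plan is to exhibit the quotient as a Banach manifold by constructing an explicit local slice for the $\Diff^{r+1}_0(X)$-action at each $f\in\widetilde\Gamma^r(X,Z,\mathcal{D})$ by means of the lifting $\Phi(f,\bu)$ introduced just before the statement, and then applying the inverse function theorem in Banach manifolds. Proposition~\ref{prop-gamma-tilde-banach-1}(ii) already supplies the correct infinitesimal picture: the tangent space $\mathcal{C}^r(X,f^*\mathcal{D})\oplus\mathcal{C}^{r+1}(X,TX)$ splits into a ``horizontal'' summand $\mathcal{C}^r(X,f^*\mathcal{D})$ and an ``orbit'' summand $\mathcal{C}^{r+1}(X,TX)$, the latter being the Lie algebra of $\Diff^{r+1}_0(X)$ whose infinitesimal action on $\widetilde\Gamma^r(X,Z,\mathcal{D})$ at $f$ is $v\mapsto f_*v$.

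Concretely, I would first observe that the action is free: if $f\circ\psi=f$ with $f$ an embedding, injectivity of $f$ forces $\psi=\Id_X$ pointwise. Next, I would form the map
$$\Psi:\mathcal{C}^r(X,f^*\mathcal{D})\times\Diff^{r+1}_0(X)\longrightarrow\widetilde\Gamma^r(X,Z,\mathcal{D}),\qquad(u,\psi)\longmapsto\Phi(f,u)\circ\psi,$$
defined on a neighborhood of $(0,\Id_X)$. Its differential at $(0,\Id_X)$ sends $(u,v)$ to $u+f_*v\in T_f\widetilde\Gamma^r(X,Z,\mathcal{D})$, which is precisely the direct sum isomorphism of Proposition~\ref{prop-gamma-tilde-banach-1}(ii) (obtained by restricting the transversality isomorphism~\ref{banach-isom3} to the relevant subspace of the Lie algebra factor). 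The inverse function theorem then makes $\Psi$ a local diffeomorphism of Banach manifolds near $(0,\Id_X)$, so that every nearby transverse embedding admits a unique decomposition $\Phi(f,u)\circ\psi$. The map $u\mapsto[\Phi(f,u)]$ therefore yields a chart on the quotient at $[f]$, modeled on $\mathcal{C}^r(X,f^*\mathcal{D})$.

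The main technical obstacle is the well-known loss of regularity in the composition law on diffeomorphism groups: $(g,\psi)\mapsto g\circ\psi$ is not real analytic and only defines a ``Banach Lie group'' in the weak sense of Lemma~\ref{banach-isom1}. What is needed here, however, is just enough differentiability of $\Psi$ near $(0,\Id_X)$ for the inverse function theorem to apply; this is provided by the differentiability of composition at the identity (Lemma~\ref{banach-isom1}) together with the smoothness of $\Phi(f,\bu)$ in its $u$-argument. Once $\Psi$ is shown to be locally invertible, transition functions between the charts $u\mapsto[\Phi(f,u)]$ associated with different base points are uniquely determined by the decomposition $\Phi(f,u)\circ\psi$ and are differentiable by the implicit function theorem. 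This endows $\widetilde\Gamma^r(X,Z,\mathcal{D})/\Diff^{r+1}_0(X)$ with a Banach manifold structure, and identifies its tangent space at $[f]$ with $\mathcal{C}^r(X,f^*\mathcal{D})$ through the differential at $0$ of $u\mapsto[\Phi(f,u)]$, as claimed.
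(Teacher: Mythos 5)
Your argument follows the route the paper takes implicitly: the lemma carries a~\qed{} in the text because the authors regard it as a summary of the observations immediately preceding it, namely the transversality splitting (\ref{banach-isom3}) and the lifting $\Phi(f,\bu)$, which together furnish exactly the local slice you construct. The freeness of the action (injectivity of $f$ forces $\psi=\Id_X$), the computation of the differential of $(u,v)\mapsto\Phi(f,u)\circ\psi$ at $(0,\Id_X)$, and the resulting identification of $T_{[f]}$ with $\mathcal{C}^r(X,f^*\mathcal{D})$ are all as in the paper.

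The one step you should not lean on as written is the inverse function theorem applied to $\Psi(u,\psi)=\Phi(f,u)\circ\psi$. The Banach inverse function theorem needs $\Psi$ to be of class $\mathcal{C}^1$ on a \emph{neighborhood} of $(0,\Id_X)$, whereas Lemma~\ref{banach-isom1} only guarantees that the composition law is continuous and differentiable \emph{at} $\Id_X$; differentiability at a single point does not suffice, and the partial derivative of $(g,\psi)\mapsto g\circ\psi$ in the $\psi$-variable involves $dg$, which is precisely the loss of one derivative that the paper flags before introducing $\widetilde\Gamma^r$. The paper sidesteps this: the chart of Proposition~\ref{prop-gamma-tilde-banach-1}(ii) is already built as a product $(u,v)\mapsto (f_t\circ\psi_t)_{|t=1}$ of a $\mathcal{D}$-directed flow and a reparametrizing flow, so the local inverse of your $\Psi$ --- i.e.\ the unique decomposition $g=\Phi(f,u)\circ\psi$ for $g$ close to $f$ --- can be exhibited directly rather than deduced from the inverse function theorem: by transversality, $(x,\xi)\mapsto\Phi(f(x),\xi)$ with $\xi$ in a small ball of $\mathcal{D}_{f(x)}$ parametrizes a tubular neighborhood of $f(X)$, so each $g(x)$ is uniquely written as $\Phi(f(y),u(y))$ with $y=\psi(x)$, which recovers $\psi$ and $u$ with the stated regularity. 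With that substitution (and the same remark for your transition functions) your proof is complete and agrees with the paper's.
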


Our next goal is to compute $J_f$ and the differential $dJ_f$ of
$f\mapsto J_f$ when $f$ varies in the above Banach manifold
$\widetilde \Gamma^r(X,Z,\mathcal{D})$.  Near a point $z_0\in Z$ we
can pick holomorphic coordinates $z=(z_1,\dots,z_N)$ 
centered at $z_0$, such that
$\mathcal{D}_{z_0}=\Span(\partial / \partial z_j)_{n+1 \leq j \leq N}$. Then
we have
\begin{equation}\label{D-generators}
\mathcal{D}_z=\Span\left(\frac{\partial}{\partial z_j}+
\sum_{1 \leq i \leq n}a_{ij}(z)
\frac{\partial}{\partial z_i}\right)_{n+1 \leq j \leq N},\ a_{ij}(z_0)=0.
\end{equation}
In other words $\mathcal{D}_z$ is the set of vectors of the form
$(a(z)\eta,\eta) \in \mathbb C^n \times \mathbb C^{N-n}$, where
$a(z)=(a_{ij}(z))$ is a holomorphic map into the space 
$\mathcal{L}(\mathbb{C}^{N-n},\mathbb{C}^n)$ of
$n\times(N-n)$ matrices. A trivial calculation shows that the vector fields
$e_j(z)=\frac{\partial}{\partial z_j}+\sum_ia_{ij}(z)
\frac{\partial}{\partial z_i}$ have brackets equal to
$$
[e_j,e_k]=\sum_{1 \leq i \leq n}\left(\frac{\partial a_{ik}}
{\partial z_j}(z_0) - \frac{\partial a_{ij}}{\partial z_k}(z_0)\right)
\frac{\partial}{\partial z_i}\ {\rm at}\ z_0,\ n+1 \leq j,k \leq N,
$$
in other words the torsion tensor $\theta$ is given by
\begin{equation}\label{torsion-expr}
\theta(z_0)=\sum_{1 \leq i \leq n,\;n+1 \leq j,k \leq N}
\theta_{ijk}(z_0)\,dz_j\wedge dz_k\otimes\frac{\partial}{\partial z_i},\quad
\theta_{ijk}(z_0)=\frac{1}{2}\left(\frac{\partial a_{ik}}
{\partial z_j}(z_0) - \frac{\partial a_{ij}}{\partial z_k}(z_0)\right).
\end{equation}
We now take a point $x_0\in X$ and apply this to $z_0=f(x_0)\in M=f(X) \subset Z$. 
With respect to coordinates $z=(z_1,\ldots,z_N)$ chosen as above,
we have $T_{z_0}M\oplus\Span(\partial/\partial z_j)_{n+1\le j\le N}=T_{z_0}Z$,
thus we can represent $M$ 
in the coordinates $z=(z',z'') \in \mathbb C^n \times 
\mathbb C^{N-n}$ locally as a graph $z''=g(z')$ in a small polydisc
$\Omega'\times\Omega''$ centered at $z_0$, and use 
$z'=(z_1,\dots,z_n)\in\Omega'$ as local (non holomorphic$\,$!) 
coordinates on~$M$. Here $g:\Omega'\to\Omega''$ 
is $\mathcal{C}^{r+1}$ differentiable and $g(z'_0)=z''_0$. 
The embedding $f:X\to Z$ is itself obtained
as the composition with a certain local $\mathcal C^{r}$ diffeomorphism  
$\varphi:X\supset V\to \Omega'\subset\mathbb{C}^n$, i.e.
$$
f=F\circ\varphi~~\hbox{on}~~V,
\quad\varphi:V\ni x\mapsto z'=\varphi(x)\in\Omega'\subset\mathbb{C}^n,\quad
F:\Omega'\ni z'\mapsto (z',g(z'))\in Z.
$$
With respect to the $(z',z'')$ coordinates, we get a
$\mathbb{R}$-linear isomorphism
$$
\begin{array}{lllll}
dF(z') &:&\mathbb{C}^n &\longrightarrow & T_{F(z')}M\subset T_{F(z')}Z
\simeq\mathbb{C}^n\times\mathbb{C}^{N-n}\\
\noalign{\vskip4pt}
 & &\kern3pt
\zeta & \longmapsto & (\zeta,dg(z')\cdot\zeta)=(\zeta,\partial g(z')
\cdot \zeta + \overline{\partial}g(z') \cdot \zeta).
\end{array} 
$$
Here $\overline{\partial}g$ is defined with respect to the standard
complex structure of $\mathbb C^n \ni z'$ and has a priori no
intrinsic meaning. The almost complex structure $J_f$ can be
explicitly defined by
\begin{equation}\label{def-Jf}
J_f(x)=d\varphi(x)^{-1}\circ J_{F}(\varphi(x))\circ d\varphi(x),
\end{equation}
where $J_F$ is the almost complex structure on $M$ defined by the embedding
$F:M\subset Z$, expressed in coordinates as $z'\mapsto (z',g(z'))$. We get
by construction
\begin{equation}\label{def-JF}
J_F(z')=dF(z')^{-1} \circ\pi_{Z,\mathcal{D},M}(F(z'))\circ J_Z(F(z')) \circ dF(z')
\end{equation}
where $J_Z$ is the complex structure on $Z$ and
$\pi_{Z,\mathcal{D},M}(z): T_zZ\rightarrow T_zM$ is the $\mathbb R$-linear 
projection to $T_zM$ along $\mathcal{D}_z$ at a point~$z\in M$. Since
these formulas depend on the first
derivatives of~$F$, we see that $J_f$ is at least of class $\mathcal{C}^{r-1}$ on $X$ and $J_F$ is at least of class
$\mathcal{C}^{r-1}$ on~$M$. We will see in Proposition~\ref{diff-cplx-struct-expr} that $J_f$ is in fact of class $\mathcal{C}^r$ on $X$ for $f \in \widetilde{\Gamma}^r(X,Z,\mathcal{D})$. Using the identifications
$T_{F(z')}M\simeq \mathbb C^n$, $T_zZ \simeq \mathbb C^N$ given by the
above choice of coordinates, we simply have $J_Z\eta = i\eta$ on $TZ$
since the $(z_j)$ are holomorphic, and we get therefore
$$
\begin{array}{lllll}
J_Z(F(z')) \circ dF(z') \cdot \zeta & = & idF(z') \cdot \zeta & = & i(\zeta,dg(z')\cdot \zeta) = (i\zeta,\partial g(z') \cdot i\zeta-\overline{\partial}g(z')\cdot i\zeta)\\
& & & = & (i\zeta,dg(z') \cdot i\zeta)-2(0,\overline{\partial}g(z') \cdot i\zeta).
\end{array}
$$
By definition of $z\mapsto a(z)$, we have
$(a(z)\eta,\eta)\in\mathcal{D}_z$ for every $\eta\in\mathbb{C}^{N-n}$, and so
$$
\pi_{Z,\mathcal{D},M}(z)(0,\eta)
=\pi_{Z,\mathcal{D},M}(z)\big((0,\eta)-(a(z)\eta,\eta)\big)
=-\pi_{Z,\mathcal{D},M}(z)(a(z)\eta,0).
$$
We take here $\eta=\overline{\partial}g(z') \cdot i\zeta$.
As $(i\zeta,dg(z') \cdot i\zeta)\in T_{F(z')}M$ already, we find
$$
\pi_{Z,\mathcal{D},M}(F(z'))\circ J_Z(F(z')) \circ dF(z') \cdot \zeta=
(i\zeta,dg(z') \cdot i\zeta)+2\pi_{Z,\mathcal{D},M}(F(z'))
\big(a(F(z'))\overline{\partial}g(z') \cdot i\zeta,0\big).
$$
From (\ref{def-JF}), we get in this way
\begin{equation}\label{formula-JF}
J_F(z') \cdot \zeta = i\zeta -2dF(z')^{-1} \circ\pi_{Z,\mathcal{D},M}(F(z'))\big(ia(F(z'))\overline{\partial}g(z') \cdot \zeta,0\big).
\end{equation}
In particular, since $a(z_0)=0$, we simply have $J_F(z'_0)\cdot \zeta = i\zeta$.
\vskip1mm

We want to evaluate the variation of the almost complex structure $J_f$ when
the embedding $f_t=F_t\circ\varphi_t$ varies with respect to some 
parameter $t\in[0,1]$. Let $w\in\mathcal{C}^r(X,f^*TZ)$
be a given infinitesimal variation of $f_t$ and $w=u+f_*v$, $u\in\mathcal{C}^r
(X,f^*\mathcal{D})$, $v\in\mathcal{C}^{r+1}(X,TX)$ its direct sum decomposition.
With respect to the trivia\-lization of $\mathcal{D}$ given by our
local holomorphic frame $(e_j(z))$, we can write in local coordinates
$$
u(\varphi^{-1}(z'))=\big(a(F(z'))\cdot\eta(z'),\eta(z')\big)\in\mathcal{D}_{F(z')}
$$
for some section $z'\mapsto\eta(z')\in\mathbb{C}^{N-n}$. Therefore
$$
u(\varphi^{-1}(z'))=\big(0,\eta(z')-dg(z')\cdot a(F(z'))\cdot\eta(z')\big)+
F_*\big(a(F(z'))\cdot\eta(z')\big)
$$
where the first term is ``vertical'' and the second one belongs to 
$T_{F(z')}M$. We then get a slightly different decomposition
$\widetilde w:=w\circ\varphi^{-1}=\widetilde u+ F_*\widetilde v
\in\mathcal{C}^r(\Omega',F^*TZ)$ where
\begin{eqnarray*}
&&\widetilde u(z')=
\big(0,\eta(z')-dg(z')\cdot a(F(z'))\cdot\eta(z')\big)\in
\{0\}\times\mathbb{C}^{N-n},\\
\noalign{\vskip4pt}
&&\kern1pt\widetilde v(z')=
\varphi_*v(z')+a(F(z'))\cdot\eta(z')\in\mathbb{C}^n.
\end{eqnarray*}
This allows us to perturb $f=F\circ\varphi$ as $f_t=F_t\circ\varphi_t$
with
\begin{equation}\label{embedding-variation}
\left\{\kern-15pt
\begin{matrix}
&&X\ni x\kern6pt\longmapsto z'=\varphi_t(x)
=\varphi(x)+t\widetilde v(\varphi(x))\in\mathbb{C}^n,\hfill\\
\noalign{\vskip6pt}
&&\mathbb{C}^n\ni z'\longmapsto F_t(z')=(z',g_t(z'))\in Z,\hfill\\
\noalign{\vskip5pt}
&&g_t(z')=g(z')+t\widetilde u(z')
=g(z')+t\big(\eta(z')-dg(z')\cdot a(F(z'))\cdot\eta(z')\big),\hfill
\end{matrix}
\right.
\end{equation}
in such a way that $\dot f_t=\smash{\frac{d}{dt}(f_t)_{|t=0}}=w$.
%\,$; in the , all derivatives $\smash{\frac{d}{dt}_{|t=0}}$ will be indicated by a dot.
We replace 
$f,\,g,\,F,\,M$ by $f_t,\,g_t,\,F_t,\,M_t$ in (\ref{formula-JF}) and compute
the derivative for $t=0$ and $z'=z'_0$. Since $a(z_0)=0$, the only non zero 
term is the one involving the derivative of the map~$t \mapsto a(F_t(z'))$. 
We have $\smash{\dot F_t}(z'_0)=(0,\eta(z'_0))=u(x_0)$ where 
$\eta(z'_0)\in\mathbb{C}^{N-n}$, thus $\smash{\dot J_{F_t}}$ can be
expressed at $z'_0$ as
$$
\dot J_{F_t}(z'_0)\cdot\zeta:=
\frac{d}{dt}\big(J_{F_t}(z'_0)\cdot\zeta\big)_{|t=0}=
-2dF(z'_0)^{-1}\circ\pi_{Z,\mathcal{D},M}(z'_0)\big(
ida(z_0)(u(x_0))\cdot\overline\partial g(z'_0)\cdot\zeta,0\big).
$$
Now, if we put $\lambda=ida(z_0)(u(x_0))\overline{\partial}g(z'_0)
\cdot\zeta$, as $\mathcal{D}_{z_0}=\{0\} \times \mathbb C^{N-n}$
in our coordinates, we immediately get
$$
\pi_{Z,\mathcal{D},M}(z'_0)(\lambda,0) = (\lambda,dg(z'_0)\cdot\lambda)
=dF(z'_0)\cdot\lambda ~~\Longrightarrow~~
dF(z'_0)^{-1} \circ \pi_{Z,\mathcal{D},M}(z'_0)(\lambda,0) = \lambda.
$$
Therefore, we obtain the very simple expression
\begin{equation}\label{dot-JF}
\dot J_{F_t}(z'_0)=
-2i\,da(z_0)(u(x_0))\cdot \overline{\partial}g(z'_0)\in
\End_{\overline{\mathbb{C}}}(\mathbb{C}^n)
\end{equation}
where $da(z_0)(\xi)\in\mathcal{L}(\mathbb{C}^{N-n},\mathbb{C}^n)$ is the derivative
of the matrix function $z\mapsto a(z)$ at point $z=z_0$ in the direction $\xi\in\mathbb{C}^N$, and $\overline\partial g(z'_0)$ is viewed as an element of
$\mathcal{L}_{\overline{\mathbb{C}}}(\mathbb{C}^n,\mathbb{C}^{N-n})$.
What we want is the derivative of 
$J_{f_t}=d\varphi_t^{-1}\circ J_{F_t}(\varphi_t)\circ d\varphi_t$ at $x_0$
for $t=0$. Writing $\varphi_*$ as an abbreviation for $d\varphi$, we find
for $t=0$
\begin{equation}\label{dot-Jf}\kern-15pt
\begin{matrix}
&&\dot J_{f_t} = 
-\varphi_*^{-1}\circ d\dot\varphi_t\circ\varphi_*^{-1}\circ J_F(\varphi)
\circ\varphi_*+\varphi_* ^{-1}\circ J_F(\varphi)\circ d\dot\varphi_t
+\varphi_*^{-1}\circ\dot J_{F_t}(\varphi) \circ\varphi_*\hfill\\
\noalign{\vskip5pt}
&&\phantom{\dot J_{f_t}} = 
2J_f\,\overline\partial_{J_f}(\varphi_* ^{-1}\dot\varphi_t)
+\varphi_*^{-1}\circ\dot J_{F_t}(\varphi) \circ\varphi_*,\hfill
\end{matrix}
\end{equation}
where the first term in the right hand side comes from the identity 
$-ds\circ J_f+J_f\circ ds=2J_f\,\overline\partial_{J_f}s$
with $s=\varphi_*^{-1}\dot\varphi_t\in\mathcal{C}^r(X,TX)$ and 
$ds=\varphi_*^{-1}d\dot\varphi_t$. Our choices
$\widetilde v=\varphi_*v+a\circ F\cdot \eta$ and 
$\varphi_t=\varphi+t\widetilde v\circ\varphi$ yield
$$
\dot\varphi_t=\widetilde v\circ\varphi=
\varphi_*v+a\circ f\cdot\eta\circ\varphi~~\Longrightarrow~~
\varphi_*^{-1}\dot\varphi_t=v+\varphi_*^{-1}(a\circ f\cdot \eta\circ\varphi).
$$
If we recall that $a(z_0)=0$ and $\eta(\varphi(x_0))=\eta(z'_0)=\pr_2 u(x_0)$,
we get at $x=x_0$
\begin{equation}\label{dbar-tilde-v}
\overline\partial_{J_f}(\varphi_*^{-1}\dot\varphi_t)(x_0)=
\overline\partial_{J_f}v(x_0)+\varphi_*^{-1}\big(
da(z_0)(\overline\partial_{J_f}f(x_0))\cdot\pr_2 u(x_0)\big).
\end{equation}
By construction, $\varphi_*=d\varphi$ is compatible with
the respective almost complex structures
$(X,J_f)$ and $(\mathbb{C}^n,J_F)$. A combination
of (\ref{dot-JF}), (\ref{dot-Jf}) and (\ref{dbar-tilde-v}) yields
$$
\dot J_{f_t}(x_0) = 2J_f\,\overline\partial_{J_f}v(x_0)+
\varphi_*^{-1}\Big(
2i\,da(z_0)(\overline\partial_{J_f}f(x_0))\cdot \pr_2u(x_0)
-2i\,da(z_0)(u(x_0)) \cdot \overline\partial g(z'_0)\circ\varphi_*\Big).
$$
As $\overline\partial_{J_f}f(x_0)= (\overline\partial_{J_F}F)(z'_0)\circ
d\varphi(x_0)=(0,\overline\partial g(z'_0))\circ\varphi_*$ and 
$f_*=F_*\circ\varphi_*$, we get
$$
\dot J_{f_t}(x_0) = 
f_*^{-1}F_*\Big(2i\,da(z_0)(\overline\partial_{J_f}f(x_0))\cdot\pr_2u(x_0)
-2i\,da(z_0)(u(x_0)) \cdot \pr_2\overline\partial_{J_f}f(x_0)\!\Big)
+2J_f\,\overline\partial_{J_f}v(x_0).
$$
By (\ref{torsion-expr}), the torsion tensor $\theta(z_0):
\mathcal{D}_{z_0}\times\mathcal{D}_{z_0}\to T_{z_0}Z/\mathcal{D}_{z_0}
\simeq F_*T_{z_0}M=f_*T_{x_0}X$ is given by
$$
\theta(\eta,\lambda)=\sum_{1\le i\le n,\,n+1\le j,k\le N}
\left(\frac{\partial a_{ik}}
{\partial z_j}(z_0) - \frac{\partial a_{ij}}{\partial z_k}(z_0)\right)
\eta_j\lambda_k\frac{\partial}{\partial z_i}=
da(z_0)(\eta)\cdot\lambda-da(z_0)(\lambda)\cdot\eta.
$$
Since our point $x_0\in X$ was arbitrary and $\dot J_{f_t}(x_0)$ is the value
of the differential $dJ_f(w)$ at~$x_0$, we finally get the global formula
$$
dJ_f(w)=2J_f\big(f_*^{-1}\theta(\overline\partial_{J_f}f,u)
+\overline\partial_{J_f}v\big)
$$
(observe that $\overline\partial_{J_f}f\in{\mathcal{L}}_{\overline{\mathbb{C}}}
(TX,f^*TZ)$ actually takes values in $f^*\mathcal{D}$, so taking a projection
to $f^*\mathcal{D}$ is not needed). We conclude$\,$:

\begin{proposition}\label{diff-cplx-struct-expr} Let $r \in [1,+\infty] \cup \{\omega\}$.
\begin{itemize}
 \item[\rlap{\kern-1cm\rm(i)}] The natural map $f\mapsto J_f$ sends 
$\widetilde\Gamma^r(X,Z,\mathcal{D})$ into $\mathcal{J}^r(X).$
\item[\rlap{\kern-1cm\rm(ii)}]
The differential of the natural map
$$\widetilde \Gamma^r(X,Z,\mathcal{D})\to \mathcal{J}^{r}(X),\qquad f\mapsto J_f$$
along every infinitesimal variation $w=u+f_*v:X\to f^*TZ=
f^*\mathcal {D}\oplus f_*TX$ of $f$ is given by
$$
dJ_f(w)=2J_f\big(f_*^{-1}\theta(\overline\partial_{J_f}f,u)
+\overline\partial_{J_f}v\big)
$$
where $\theta:\mathcal{D}\times\mathcal{D}\to TZ/\mathcal{D}$
is the torsion tensor of the holomorphic distribution $\mathcal{D}$,
and $\overline\partial f=\overline\partial_{J_f} f$,
$\overline\partial v=\overline\partial_{J_f}v$
are computed with respect to the almost complex structure $(X,J_f)$.
\item[\rlap{\kern-1cm\rm(iii)}] The differential $dJ_f$ of $f\mapsto J_f$
on $\widetilde\Gamma^r(X,Z,\mathcal{D})$ is a continuous morphism
$$
\mathcal{C}^r(X,f^*\mathcal{D})\oplus \mathcal{C}^{r+1}(X,TX)\longrightarrow
\mathcal{C}^r(X,\End_{\overline{\mathbb{C}}}(TX)),\quad
(u,v)\longmapsto 2i\big(\theta(\overline\partial f,u)+\overline\partial v\big).
$$
\end{itemize}
\end{proposition}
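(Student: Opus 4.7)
The plan is to prove (ii) first as a local computation at an arbitrary base point, then read off (i) as a regularity statement implied by the resulting formula, and finally deduce (iii) from linearity and the $\mathcal{C}^r$ character of the coefficients. For (ii), I would fix $x_0 \in X$, set $z_0 = f(x_0)$, and choose local holomorphic coordinates $(z_1,\ldots,z_N)$ on $Z$ centered at $z_0$ with $\mathcal{D}_{z_0} = \Span(\partial/\partial z_j)_{n+1\le j\le N}$, so that $\mathcal{D}$ is presented as in (\ref{D-generators}) with the normalization $a(z_0)=0$. Writing $f = F\circ\varphi$ with $F(z') = (z', g(z'))$, formula (\ref{formula-JF}) gives $J_F$, and (\ref{def-Jf}) then gives $J_f$. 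Given $w = u + f_* v \in \mathcal{C}^r(X, f^*\mathcal{D}) \oplus f_*\mathcal{C}^{r+1}(X, TX)$, I would realize $w$ as the infinitesimal variation of the explicit deformation $f_t = F_t \circ \varphi_t$ constructed in (\ref{embedding-variation}).

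Differentiating (\ref{formula-JF}) at $t=0$, all terms but one vanish at $z_0$ because $a(z_0)=0$; the surviving contribution involves $da(z_0)$ applied to $\dot F_t(z_0') = u(x_0)$, yielding (\ref{dot-JF}). Combining this with the reparametrization variation $\dot\varphi_t$ via the identity $-ds \circ J + J\circ ds = 2J\,\overline\partial_J s$ produces (\ref{dot-Jf}), and the substitutions of (\ref{dbar-tilde-v}) consolidate the two $da(z_0)$ contributions into the antisymmetric combination $da(z_0)(\eta)\cdot\lambda - da(z_0)(\lambda)\cdot\eta$, which by (\ref{torsion-expr}) is exactly $\theta(\eta,\lambda)$. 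Since $\overline\partial_{J_f} f \in \mathcal{L}_{\overline{\mathbb{C}}}(TX, f^*\mathcal{D})$ automatically lands in $f^*\mathcal{D}$, the resulting formula makes intrinsic sense and, as $x_0$ was arbitrary, holds globally on $X$.

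For (i), the point is that (\ref{def-JF}) expresses $J_F$ through $\pi_{Z,\mathcal{D},M}\circ J_Z \circ dF$, and $\pi_{Z,\mathcal{D},M}$ annihilates $\mathcal{D}$; hence only $h\cdot dF$ with $h$ a conormal $(TZ/\mathcal{D})^*$-valued $1$-form enters the expression. By the very definition of $\smash{\widetilde\Gamma^r}(X,Z,\mathcal{D})$, these transverse derivatives are $\mathcal{C}^r$, so $J_F$ is $\mathcal{C}^r$ on $M$ and pulling back by the $\mathcal{C}^r$ diffeomorphism $\varphi$ keeps $J_f$ of class $\mathcal{C}^r$. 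Part (iii) is then a formal consequence of the formula in (ii): the torsion $\theta$ is a holomorphic bundle map on $Z$, $f$ and $J_f$ are $\mathcal{C}^r$ by (i), the operator $\overline\partial_{J_f}:\mathcal{C}^{r+1}(X, TX)\to\mathcal{C}^r(X,\End_{\overline{\mathbb{C}}}(TX))$ is a first-order differential operator with $\mathcal{C}^r$ coefficients, and the loss of one derivative on $v$ precisely matches the source space $\mathcal{C}^{r+1}(X, TX)$; continuity of each factor then gives continuity of the whole linear map.

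The main obstacle will be bookkeeping the coordinate identifications to ensure the formula assembled at $x_0$ is genuinely intrinsic. This reduces to verifying that $da(z_0)(\eta)\cdot\lambda - da(z_0)(\lambda)\cdot\eta$ is independent of the chosen holomorphic frame $(e_j)_{n+1\le j\le N}$ of $\mathcal{D}$ and the trivialization of $TZ/\mathcal{D}$ along $f_* TX$, which is exactly the content of the intrinsic definition of $\theta$ as a section of $\Lambda^2 \mathcal{D}^*\otimes (TZ/\mathcal{D})$; any change of frame modifies $a$ by a factor vanishing at $z_0$ (because $a(z_0)=0$) and hence leaves $da(z_0)$ invariant modulo $\mathcal{D}$. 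Once this invariance is checked, the local formula patches globally and the three statements follow together.
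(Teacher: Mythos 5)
Your proposal follows essentially the same route as the paper: the statement is really a summary of the computation carried out just before it (formulas (\ref{formula-JF}), (\ref{embedding-variation}), (\ref{dot-JF})--(\ref{dbar-tilde-v}) and the identification of the antisymmetrized $da(z_0)$ term with $\theta$ via (\ref{torsion-expr})), and your treatment of (i) via the observation that $J_f$ depends only on the transverse part $q\circ df$ of the differential, and of (iii) as a formal consequence of the variation formula, matches the paper's own brief proof. No gaps; this is the intended argument.
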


\noindent
If $r= +\infty$ or $r=\omega$ then we replace $r+1$ by $r$ in Condition (iii).

\begin{proof}
Parts (i) and (ii) are clear, as it can be easily seen that $\overline\partial f$ depends only on the transversal part of $df$ by the very definition of $J_f$ and of $\overline\partial f=\frac{1}{2}(df+J_Z\circ df\circ J_f)$.

Part (iii) is a trivial consequence of the general variation formula.
\end{proof}

Our goal, now, is to understand under which conditions $f\mapsto J_f$ can
be a local submersion from $\smash{\widetilde\Gamma^r}(X,Z,\mathcal{D})$
to  $\mathcal{J}^r(X)$. If we do not take into account the quotient by
the action of $\Diff_0^{r+1}$ on  $\mathcal{J}^r(X)$, we obtain a more
demanding condition. For that stronger requirement, we see that a 
sufficient condition is that the continuous linear map
\begin{equation}\label{partial-d-Jf}
\mathcal{C}^r(X,f^*\mathcal{D})\longrightarrow
\mathcal{C}^r(X,\End_{\overline{\mathbb{C}}}(TX)),\quad
u\longmapsto 2i\,\theta(\overline\partial f,u)
\end{equation}
be surjective.

\begin{theorem}\label{theorem-var-Jf} Fix $r\in[1,\infty]\cup\{\omega\}$
$($again, $\omega$ means real analyticity here$)$. Let $(Z,\mathcal{D})$ be 
a complex manifold equipped with a holomorphic distribution, and let $f\in
\smash{\widetilde\Gamma^r}(X,Z,\mathcal{D})$ be a transverse embedding
with respect to $\mathcal{D}$. Assume that $f$ and the torsion 
tensor $\theta$ of $\mathcal{D}$ satisfy the following additional 
conditions$\,:$
\vskip1mm
\begin{itemize}[leftmargin=2\parindent]
\item[{\rm(i)}] $f$ is a totally real embedding, i.e.\ 
$\overline\partial f(x)\in\End_{\overline{\mathbb{C}}}(T_xX,T_{f(x)}Z)$ is 
injective at every point $x\in X\,;$
\vskip1mm
\item[{\rm(ii)}] for every $x\in X$ and every 
$\eta\in \End_{\overline{\mathbb{C}}}(TX)$, there exists a vector
$\lambda\in\mathcal{D}_{f(x)}$ such that $\theta(\overline\partial f(x)\cdot
\xi,\lambda)=\eta(\xi)$ for all $\xi\in TX$.
\end{itemize}
\vskip1mm
Then there is a neighborhood $\mathcal{U}$ of $f$ in $\smash{\widetilde\Gamma^r}(X,Z,\mathcal{D})$ and a neighborhood $\mathcal {V}$ of $J_f$ in 
$\mathcal{J}^r(X)$ such that $\mathcal{U}\to\mathcal{V}$,
$f\mapsto J_f$ is a submersion.
\end{theorem}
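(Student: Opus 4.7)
The plan is to apply the submersion theorem for Banach manifolds to the map $f\mapsto J_f$ from $\widetilde\Gamma^r(X,Z,\mathcal{D})$ to $\mathcal{J}^r(X)$. By Proposition~\ref{diff-cplx-struct-expr} the differential has the form
$$
dJ_f(u,v)=2J_f\bigl(f_*^{-1}\theta(\overline\partial_{J_f}f,u)+\overline\partial_{J_f}v\bigr),
$$
so it suffices to produce a continuous linear right inverse to the partial map
$$
\Psi:\mathcal{C}^r(X,f^*\mathcal{D})\longrightarrow\mathcal{C}^r(X,\End_{\overline{\mathbb{C}}}(TX)),\qquad u\longmapsto 2J_f\,f_*^{-1}\theta(\overline\partial_{J_f}f,u).
$$
Extending such a right inverse by zero on the $v$-component splits the full differential $dJ_f$ as a continuous surjection, and the standard Banach submersion theorem then supplies the required neighborhoods~$\mathcal{U},\mathcal{V}$.

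I would build this right inverse fiberwise. At each $x\in X$, the induced $\mathbb R$-linear map
$$
\Theta_x:\mathcal{D}_{f(x)}\longrightarrow\End_{\overline{\mathbb{C}}}(T_xX),\qquad\lambda\longmapsto\bigl(\xi\mapsto 2J_f\,f_*^{-1}\theta(\overline\partial_{J_f}f(x)\cdot\xi,\lambda)\bigr),
$$
takes values in $\End_{\overline{\mathbb{C}}}(T_xX)$ because $\overline\partial_{J_f}f$ is anti-$\mathbb{C}$-linear while $\theta(\cdot,\lambda)$ is $\mathbb{C}$-linear. Condition~(i) gives the injectivity of $\overline\partial_{J_f}f(x)$ on $T_xX$, while condition~(ii) is exactly the pointwise surjectivity of $\Theta_x$. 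I would then equip the bundles $\mathcal{D}\to Z$ and $\End_{\overline{\mathbb{C}}}(TX)\to X$ with smooth Hermitian metrics and form the Moore-Penrose right inverse
$$
R_x=\Theta_x^{*}(\Theta_x\Theta_x^{*})^{-1}:\End_{\overline{\mathbb{C}}}(T_xX)\longrightarrow\mathcal{D}_{f(x)}.
$$
By Proposition~\ref{diff-cplx-struct-expr}(iii) the morphism $x\mapsto\Theta_x$ is of class $\mathcal{C}^r$---this is the crucial reason for working in $\widetilde\Gamma^r$ rather than $\Gamma^r$, since no derivative is then lost in the transverse directions---hence $\Theta_x\Theta_x^{*}$ is $\mathcal{C}^r$ and uniformly invertible by the compactness of $X$. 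Consequently $R_x$ is of class $\mathcal{C}^r$ in $x$, and composition with sections defines the required continuous linear right inverse $R:\mathcal{C}^r(X,\End_{\overline{\mathbb{C}}}(TX))\to\mathcal{C}^r(X,f^*\mathcal{D})$.

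For finite $r\ge 1$, the above is all that is needed. For $r=\omega$, the Moore-Penrose construction is real analytic on its open domain of invertibility, so the real analytic Banach implicit function theorem applies verbatim. For $r=\infty$, one can either invoke the tame Fréchet implicit function theorem---available because $R$ loses no derivative---or apply the finite-$r'$ case and then bootstrap the regularity of the preimage $f'$ by repeatedly differentiating the defining equation~\eqref{formula-JF}, using condition~(ii) to solve algebraically for successive derivatives of $g'$. The principal obstacle is precisely the construction of the continuous right inverse $R$; the rest is a routine application of the implicit function theorem.
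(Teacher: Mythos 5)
Your proposal is correct and follows essentially the same route as the paper: both reduce the problem to the pointwise surjectivity of the finite-rank bundle morphism $u\mapsto 2i\,\theta(\overline\partial f,u)$ guaranteed by (ii), produce a $\mathcal{C}^r$ splitting of it (your Moore--Penrose right inverse $R_x=\Theta_x^*(\Theta_x\Theta_x^*)^{-1}$ is just an explicit choice of the paper's $\mathcal{C}^r$ complement $\mathcal{E}$ of $\Ker L_f$ in $f^*\mathcal{D}$), and then apply the Banach implicit function theorem, handling $r=\infty$ and $r=\omega$ by reduction to finite regularity. The only point to tighten is the case $r=\omega$, where $\mathcal{C}^\omega$ is not a Banach space and one must work, as the paper does, with the inductive limit of the Banach spaces $\mathcal{C}^\omega_\rho$ of sections with uniformly convergent Taylor expansions (and use a real analytic metric so that $R_x$ stays in this class).
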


\begin{proof} This is an easy consequence of the implicit function theorem in the Banach space situation $r<+\infty$. Let $\Phi$ be the real analytic map \hbox{$TZ\to Z\times Z$} considered in section~\ref{deform-section}, and let 
$$
\Psi_f:\mathcal{C}^r(X,f^*\mathcal{D)}\longrightarrow
\widetilde\Gamma^r(X,Z,\mathcal{D}),\qquad u\mapsto\Phi(f,f_*u).
$$
By definition $f=\Psi_f(0)$ and $\Psi_f$ defines the infinite dimensional
manifold structure on $\smash{\widetilde\Gamma}^r(X,Z,\mathcal{D})$ by
identifying a neigborhood of $0$ in the topological vector space 
$\mathcal{C}^r(X,f^*\mathcal{D)}$ with
a neighborhood of $f$ in~$\smash{\widetilde\Gamma}^r(X,Z,\mathcal{D})$, and 
providing in  this way a ``coordinate chart''.
As we have seen in (\ref{partial-d-Jf}), the differential $u\mapsto dJ_f(u)$ 
is given by 
$$
u\mapsto L_f(u)=2i\,\theta(\overline\partial f,u)
$$ where $L_f\in\mathcal{C}^r(X,\Hom(f^*\mathcal{D},
\End_{\overline{\mathbb{C}}}(TX)))$ is by our assumption (ii) a surjective 
morphism of bundles of
finite rank. The kernel $\mathcal{K}:=\Ker L_f$ is a 
$\mathcal{C}^r$ subbundle of $f^*\mathcal{D}$, thus we can 
select a $\mathcal{C}^r$ subbundle 
$\mathcal{E}$ of $f^*\mathcal{D}$ such that
$$
f^*\mathcal{D}=\mathcal{K}\oplus \mathcal{E}.
$$
(This can be seen by a partition of unity argument for $r\ne\omega\,$; in 
the real analytic case, one can instead complexify the real analytic objects
and apply a Steinness argument together with Cartan's theorem B to obtain 
a splitting).  The~differential of the composition 
$$u\mapsto g=\Psi_f(u),\qquad g\mapsto J_g$$
is precisely the restriction of $L_f=dJ_f$ to sections
$u\in\mathcal{C}^r(X,\mathcal{E})\subset\mathcal{C}^r(X,f^*\mathcal{D})$, 
which is by construction a bundle isomorphism from 
$\mathcal{C}^r(X,\mathcal{E})$ 
onto~$\mathcal{C}^r(X,\End_{\overline{\mathbb C}}(TX))$. Hence
for $r<\infty$, $u\mapsto g=\Psi_f(u)\mapsto J_{\Psi_f(u)}$ is a 
$\mathcal{C}^r$-diffeomomorphism 
from a neighborhood $\mathcal{W}^{\mathcal{E}}_r(0)$ of the zero section of 
$\mathcal{C}^r(X,\mathcal{E})$ 
onto a neighborhood $\mathcal{V}_r$ of $J_f\in\mathcal{J}^r(X)$, and so
$g\mapsto J_g$ is a $\mathcal{C}^r$-diffeomomorphism from
$\mathcal{U}^{\mathcal{E}}_r:=\Psi(\mathcal{W}^{\mathcal{E}}_r(0))$ 
onto $\mathcal{V}_r$. This
argument does not quite work for $r=\infty$ or $r=\omega$, since we do not
have Banach spaces. Nevertheless, for $r=\infty$, we can apply the result
for a given finite $r_0$ and consider $r'\in[r_0,\infty[$ arbitrarily large. 
Then, by applying a local diffeomorphism argument in $\mathcal{C}^{r'}$
at all nearby points $g=\Psi_f(u)$ (and by using the injectivity on
$\mathcal{U}^{\mathcal{E}}_{r_0}$), we see that the map
$$\mathcal{U}^{\mathcal{E}}_{r'}:=\Psi_f(\mathcal{W}^{\mathcal{E}}_{r_0}(0)\cap\mathcal{C}^{r'}(X,\mathcal{E}))\longrightarrow
\mathcal{V}_{r_0}\cap\mathcal{J}^{r'}(X),\qquad
g\mapsto J_g$$
is a $\mathcal{C}^{r'}$-diffeomorphism. Since this is true for all $r'$ with 
the ``same'' neighborhood, i.e.\ one given by the same semi-norms of order
$r_0$ and the same bounds, the case $r=\infty$ also yields
a local diffeomorphism of Fr\'echet manifolds. For 
$r=\omega$, we have instead an inductive limit of Banach spaces of real 
analytic sections $u\in\mathcal{C}^{\omega}_\rho(X,\mathcal{E})$ whose Taylor
expansions $u(y)=\sum u_\alpha(x)(y-x)^\alpha$ converge uniformly 
on tubular neighborhoods of the diagonal,
of shrinking radii~$\rho\to 0$ (with respect to a given real analytic 
atlas of~$X$, say). The argument is quite similar, by 
considering the intersection 
$\mathcal{W}_{r_0}(0)\cap\mathcal{C}^{\omega}_\rho(X,\mathcal{E})$
we get a diffeomorphism onto a neighborhood of $J_f$ in 
$\mathcal{J}^{\omega}_\rho(X)$, if we take $\rho$
smaller than the radius of convergence $\rho_0$ that can be used for
$f$, $\Psi_f$ and~$\mathcal{E}$. We still have to justify the fact that 
$g\mapsto J_g$ is a local submersion near~$f$. Again, 
for finite values of $r$, e.g.\ for~$r=r_0$, this is true by the
Banach case of the implicit function theorem.
Then the fibers $\{g\,;\;J_g=J\}$ of $g\mapsto J_g$ are Banach 
manifolds modelled on
$\mathcal{C}^{r_0}(X,\mathcal{K})$ in a suitable neighborhood
$$
\mathcal{U}_{r_0}:=\Psi_f(\mathcal{W}^{\mathcal{K}}_{r_0}\oplus
\mathcal{W}^{\mathcal{E}}_{r_0})\subset
\widetilde\Gamma^r(X,Z,\mathcal{D})~~\hbox{of $f$}
$$
where $\mathcal{U}_{r_0}$ is obtained as the image by $\Psi_f$ of a sufficiently small neighborhood of $0$ in
$$
\mathcal{C}^{r_0}(X,f^*\mathcal{D})=
\mathcal{C}^{r_0}(X,\mathcal{K})\oplus \mathcal{C}^{r_0}(X,\mathcal{E}).
$$
Observe that the ``central'' fiber $\{g\,;\;J_g=J_f\}$ is in fact tangent to
$\mathcal{C}^{r_0}(X,\mathcal{K})\subset \mathcal{C}^{r_0}(X,f^*\mathcal{D})$, and
that by continuity, $\Ker dJ_g$ is a supplementary subspace of
$\mathcal{C}^{r_0}(X,\mathcal{E})$ for $g$ close to $f$ in 
$\mathcal{C}^{r_0}$ topology, $r_0\ge 1$.
We conclude by considering neighborhoods of $f\in
\smash{\widetilde\Gamma}^r(X,Z,\mathcal{D})$
$$
\mathcal{U}_r=\Psi_f\big((\mathcal{W}^{\mathcal{K}}_{r_0}\oplus
\mathcal{W}^{\mathcal{E}}_{r_0})\cap\mathcal{C}^r(X,f^*\mathcal{D})\big)
$$
that are ``uniform'' in~$r$.
\end{proof}
\begin{remark}\label{IFT-remark}{\rm
(a) When $\mathcal{D}$ is a foliation, i.e.\ $\theta\equiv 0$ 
identically, or
when $f$ is holomorphic or pseudo-holomorphic, i.e.\ 
$\overline\partial f=0$, we have
$dJ_f\equiv 0$ up to the action of $\Diff^{r+1}_0(X)$. Therefore, when $n>1$,
one can never attain the submersion property by means of a
foliation $\mathcal{D}$ or when starting from a (pseudo-)holomorphic map~$f$.

\vskip4pt\noindent
(b) Condition (ii) of Theorem.~\ref{theorem-var-Jf} is easily seen to be equivalent
to (\ref{partial-d-Jf}). When one of these is satisfied, condition (i)
on the injectivity of $\overline\partial f$ is in fact automatically
implied: otherwise a vector $\xi\in\Ker\overline\partial f(x)$ could never
be mapped to a nonzero element $\eta(\xi)$ assigned by~$\eta$.
\vskip4pt\noindent
(c) For condition (ii) or (\ref{partial-d-Jf}) to be satisfied, 
a necessary condition is that the rank $N-n$ of $\mathcal{D}$ be 
such that
$$
N-n\ge \rank(\End_{\overline{\mathbb{C}}}(TX))=n^2,
$$
i.e.\ $N\ge n^2+n$, so the dimension of $Z$ must be rather large compared
to $n=\dim_{\mathbb{C}} X$.
\vskip4pt\noindent
We will see in the next section that is indeed possible to find a
quasi-projective algebraic variety $Z$ whose dimension is quadratic 
in $n$, for which any $n$-dimensional almost complex manifold $(X,J)$
admits a transverse embedding $f:X\hookrightarrow Z$
satisfying (i), (ii) and $J=J_f$. The present remark shows that one
cannot improve the quadratic character $N=O(n^2)$ of the  embedding dimension
under condition (ii).}
\end{remark}

\section{Universal embedding spaces}\label{emb-sect}

We prove here the existence of the universal embedding spaces $(Z_{n,k},
\mathcal{D}_{n,k})$ claimed in Theorem~\ref{theorem-univ-embed}. They will 
be constructed as some sort of combination of Grassmannians and twistor bundles.
For $k>n$, we let $W\subset\mathbb{R}^{2k}\times G_{\mathbb{R}}(2k,2n)\times
\End_{\mathbb{R}}(\mathbb{R}^{2k})$ be the set of triples
$(w,S,J)$ where $w\in\mathbb{R}^{2k}$, $S$ lies in the real
Grassmannian of $2n$-codimensional subspaces of $\mathbb{R}^{2k}$,
$J\in\End_{\mathbb{R}}(\mathbb{R}^{2k})$ satisfies $J^2=-I$ and $J(S)\subset S$.
Clearly, $W$ is a quasi-projective real algebraic variety, and it has
a complexification $W^{\mathbb{C}}$ which can be described as a component
of the set of triples
$$
(z,S,J)\in\mathbb{C}^{2k}\times G_{\mathbb{C}}(2k,2n)\times
\End_{\mathbb{C}}(\mathbb{C}^{2k})
$$
such that $J^2=-I$ and $J(S)\subset S$. Such an endomorphism $J$ actually 
induces almost complex structures on $\mathbb{C}^{2k}$ and on $S$, and thus
yields direct sum decompositions $\mathbb{C}^{2k}=\Sigma'\oplus\Sigma''$ and
$S=S'\oplus S''$ where $S'\subset\Sigma'$, $S''\subset\Sigma''$ correspond
respectively to the $+i$ and $-i$ eigenspaces.
If $J$ is the complexification of some $J^{\mathbb{R}}\in 
\End_{\mathbb{R}}(\mathbb{R}^{2k})$ and $S$ is the complexification of 
some $S^{\mathbb{R}}\subset\mathbb{R}^{2k}$, we have 

\begin{equation}\label{dim-eq}
\dim S'=\dim S''=\frac{1}{2}\dim S=k-n\quad\hbox{and}\quad
\dim \Sigma'=\dim \Sigma''=\frac{1}{2}\dim\mathbb{C}^{2k}=k. 
\end{equation}

We let $Z$ be the irreducible nonsingular quasi-projective algebraic variety
consisting of triples $(z,S,J)$ as above where $J$ has such ``balanced'' 
eigenspaces $S'$, $S''$, $\Sigma'$, $\Sigma''$. Alternatively, we could 
view $Z$ as the set of $5$-tuples $(z,S',S'',\Sigma',\Sigma'')$ with
$S'\subset\Sigma'$, $S''\subset \Sigma''$ and $\mathbb{C}^{2k}=\Sigma'\oplus
\Sigma''$, and with dimensions given as above (the decomposition 
$\mathbb{C}^{2k}=\Sigma'\oplus\Sigma''$ then defines $J$ uniquely).
Therefore we have by (\ref{dim-eq})
$$
N:=\dim_{\mathbb{C}} Z=2k+2(k^2+n(k-n))
$$
since $k^2$ is the dimension of the Grassmannian of subspaces $\Sigma'\subset
\mathbb{C}^{2k}$ (or $\Sigma''\subset\mathbb{C}^{2k}$), and 
$n(k-n)$ the dimension of the Grassmannian of subspaces $S'\subset\Sigma'$
(or $S''\subset\Sigma''$). The real part $W=Z^{\mathbb{R}}\subset Z$ can also
be seen as the set of $5$-tuples $p=(w,S',S'',\Sigma',\Sigma'')$ for 
which $w=z=\overline z\in\mathbb{R}^{2k}$, $S''=\overline{S'}$ and 
$\Sigma''=\overline{\Sigma'}$. 

In our first interpretation, the tangent space $TZ$ at a point 
$p=(z,S,J)$ consists of triples $(\zeta,u,v)$ where $\zeta\in\mathbb{C}^{2k}$, 
$u\in \Hom(S,\mathbb{C}^{2k}/S)$ and $v\in \End(\mathbb{C}^{2k})$ is such that
\hbox{$v\circ J+J\circ v=0$} and $v(S)\subset S$. In the second 
interpretation, $T_pZ$ is given by $5$-tuples $(\zeta,u',u'',v',v'')$ with
$\zeta\in\mathbb{C}^{2k}$, 
$u'\in\Hom(S',\Sigma'/S')$,
$u''\in\Hom(S'',\Sigma''/S'')$,
$v'\in\Hom(\Sigma',\mathbb{C}^{2k}/\Sigma')$ and
\hbox{$v''\in\Hom(\Sigma'',\mathbb{C}^{2k}/\Sigma'')$}. [In order to check these relations, it may be useful to use coordinate charts, constructed e.g.\ by considering a fixed $J$-stable complementary subspace $S\oplus T=\mathbb{C}^{2k}$, and, points of the Grassmannian close to $S$ being then seen as graphs of maps $u\in\Hom(S,T)$ -- we leave these details to the reader]. We let $\mathcal{D}_p\subset T_pZ$ be the set of $5$-tuples 
$(\zeta,u',u'',v',v'')$ for which
$\zeta\in S'\oplus\Sigma''\subset \mathbb{C}^{2k}$ (with no conditions on the
other components $(u',u'',v',v'')$). Therefore we have a canonical isomorphism
$T_pZ/\mathcal{D}_p\simeq\Sigma'/S'$, and we see that $\mathcal{D}$ is an
algebraic subbundle of corank $n$, i.e.\ $\rank(\mathcal{D})=N-n$, and
$TZ/\mathcal{D}$ is isomorphic to the tautological bundle $\Sigma'/S'$ arising
from the flag manifold structure of pairs $(S',\Sigma')$ with
$S'\subset\Sigma'\subset\mathbb{C}^{2k}$.

\begin{proof}[Proof of Theorem \ref{theorem-univ-embed}]
Let $(X,J_X)$ be an arbitrary compact $n$-dimensional almost complex
manifold, where $J_X$ is of class $\mathcal{C}^{r+1}$, $r\in[0,\infty]
\cup\{\omega\}$. We may assume here that the differential structure
of $X$ itself is $\mathcal{C}^\omega$. 
Since $\dim_{\mathbb{R}}X=2n$, the strong Whitney embedding theorem
\cite{whitney} shows that there exists a $\mathcal{C}^\omega$ embedding 
$g:X\to\mathbb{R}^k$ where  $k=2(2n)=4n$. (By well-known results, one can
even take $X$ to be given by a real algebraic variety and $g$ to be algebraic, see \cite{tognoli}). Let $NX$ be the normal bundle
of $g(X)$ in $\mathbb{R}^k$ (with a slight abuse of notation consisting
of identifying $X$ and $g(X)$). We use here the Euclidean structure
of $\mathbb{R}^k$ to view $NX$ as a subbundle of the trivial tangent
bundle $T\mathbb{R}^k$. Next, we embed $X$ in $\mathbb{R}^{2k}$ by the
diagonal embedding $x\mapsto G(x)=(g(x),g(x))$, whose normal bundle is
$TX\oplus NX\oplus NX$. We have
$$
T\mathbb{R}^{2k}_{|G(X)}= TX\oplus TX\oplus NX\oplus NX.
$$
On $NX\oplus NX$ (or, for that purpose, on 
the double of any real vector bundle), there is a tautological almost 
complex structure $J_{NX\oplus NX}$ given by $(u,v)\mapsto (-v,u)$.
For every $x\in X$, we consider the complex structure $\widetilde J(x)$ on
$T\mathbb{R}^{2k}_{|G(x)}=\mathbb{R}^{2k}$ defined by
$$\widetilde J(x):=J_X(x)\oplus (-J_X(x))\oplus J_{NX\oplus NX}(x).$$
Notice that $(X,-J_X)$ is the complex conjugate almost complex manifold
$\overline X$. In some sense, we have embedded $X$ diagonally into
$X\times \overline X$ (this embedding is totally real and has normal
bundle $T\overline X$), and 
composed that diagonal embedding with the product embedding
$$
g\times g:X\times\overline X\to\mathbb{R}^k\times\mathbb{R}^k=
\mathbb{R}^{2k}
$$
which has normal bundle $\pr_1^*NX\oplus \pr_2^*NX$. Let
$\widetilde J^{\mathbb{C}}(x)\in\End(\mathbb{C}^{2k})$ be the complexification
of $\widetilde J(x)$, and let $\Sigma'_x\subset \mathbb{C}^{2k}$, 
$\Sigma''_x\subset\mathbb{C}^{2k}$ be the $+i$ and $-i$ eigenspaces
of $\widetilde J(x)$ respectively (both are $k$-dimensional). By
construction, the bundle $\Sigma'$ consists of vectors of the form
$(\xi^{1,0},\eta^{0,1},u,-iu)$, $\xi^{1,0}\in T^{1,0}X$,
$\eta^{0,1}\in T^{0,1}X$, $u\in N^{\mathbb{C}}X$, and similarly
$\Sigma''$ consists of vectors of the form $(\xi^{0,1},\eta^{1,0},u,iu)$.
We further define $S^{\mathbb{R}}\subset T\mathbb{R}^{2k}$ and its fiberwise
complexification $S_x=S_x^{\mathbb{R}}\otimes\mathbb{C}\subset\mathbb{C}^{2k}$ by
$$
S^{\mathbb{R}}=
\{0\}\oplus T\overline X\oplus NX\oplus NX,\qquad
S=\{0\}\oplus T^{\mathbb{C}}\overline X\oplus N^{\mathbb{C}}X\oplus 
N^{\mathbb{C}}X.
$$
Clearly $S^{\mathbb{R}}_x$ is stable by $\widetilde J(x)$ and
\begin{eqnarray*}
&&S'\,:=\,\Sigma'\cap S=\{0\}
\oplus T^{0,1}X\oplus\{(u,-iu),\;u\in N^{\mathbb{C}}X\},\\
&&S'':=\Sigma''\cap S=\{0\}
\oplus T^{1,0}X\oplus\{(u,iu),\;u\in N^{\mathbb{C}}X\}
\end{eqnarray*}
are the $+i$ and $-i$ eigenspaces of $\widetilde J^{\mathbb{C}}_{|S}$,
respectively. We finally get an embedding of class $\mathcal{C}^{r+1}$
$$
f:X\hookrightarrow Z,\qquad x\mapsto\big(G(x),S'_x,S''_x,
\Sigma'_x,\Sigma''_x\big),
$$
and since $(TZ/\mathcal{D})_{f(x)}\simeq \Sigma'_x/S'_x\simeq T^{1,0}_xX$,
we see that the almost complex structure $J_f$ induced by
the natural complex structure of $TZ/\mathcal{D}$ coincides 
with $J_X$. As this point, $Z$ is quasi-projective but not affine. 
However $f(X)$ is contained in the real part $W=Z^{\mathbb{R}}$, especially
the corresponding subspaces $S=S'\oplus S''$ lie in the real part 
$G_{\mathbb{R}}(2k,2n)\subset G(2k,2n)$ of the complex Grassmannian.
In this situation, we can find an ample divisor $\Delta$ of $G(2k,2n)$ 
that is disjoint from $G_{\mathbb{R}}(2k,2n)$
and invariant by complex conjugation (to see this, we embed the
Grassmannian into a complex projective space $\mathbb{CP}^s$ by the 
Pl\"ucker embedding, and observe that the real hyperquadric
$Q=\{\sum_{0\le j\le s} z_j^2=0\}$ is disjoint from $\mathbb{RP}^s$;
we can thus take $\Delta$ to be the inverse image of $Q$ by the 
Pl\"ucker embedding). By restricting the situation to
the complement $G(2k,2n)\smallsetminus\Delta$, we obtain an affine
algebraic open set $Z'\subset Z$ that is invariant by 
complex conjugation, so that $f(X)\subset Z^{\prime\,\mathbb{R}}$.
Theorem~\ref{theorem-univ-embed}  is proved with $Z_{n,k}=Z'$
and~$\mathcal{D}_{n,k}=\mathcal{D}_{|Z'}$.
\end{proof}

\begin{remark} {\rm A computation in coordinates shows that conditions
(i) and (ii) of Theorem~\ref{theorem-var-Jf} are satisfied in this construction.
Actually (i) is already implied by the fact that the image 
$M=f(X)\subset Z_{n,k}^{\mathbb{R}}$ is totally real.}
\end{remark}

\begin{remark} {\rm It is easy to find a non singular model for
a projective compactification $\overline Z_{n,k}$ of $Z_{n,k}$: just 
consider the set of $5$-tuples $p=(z,S',S'',\Sigma',\Sigma'')$ where
$z\in\mathbb{CP}^{2k}$, \hbox{$S'\subset\Sigma'\subset T\mathbb{CP}^{2k}$},
$S''\subset\Sigma''\subset T\mathbb{CP}^{2k}$, so that 
$\pi:\overline Z_{n,k}\to \mathbb{CP}^{2k}$ is a fiber bundle whose 
fibers are products of flag manifolds constructed from the tangent bundle
of the base. The associated distribution 
$\overline{\mathcal{D}}_{n,k}=(\pi_*)^{-1}(S'+\Sigma'')$, however, does possess 
singularities at all points where the sum $S'+\Sigma''$ is not direct.}
\end{remark}

\vskip 0,1cm
\noindent
{\em Symplectic case: Proof of Theorem~\ref{theorem-univ-embed-symplectic}.} Let $(X,J, \omega)$ be a compact $n$-dimensional almost complex symplectic manifold with second Betti number $b_2\le b$ and a $J$-compatible symplectic form $\omega$. We choose $b_2$ rational cohomology classes on $X$, denoted $[\omega_1],\dots,[\omega_{b_2}]$, that form a basis of the De Rham cohomology space $H^2(X,\mathbb R)$. For this, we take classes $[\omega_j] \in H^2(X,\mathbb Q)$ very close to $[\omega]$, such that $[\omega]$ lies in the interior of the simplex of vertices $[0]$, $[\omega_1],\,\ldots\,,[\omega_{b_2}]$. The 2-form $\omega_j$ can be taken to be very close to $\omega$ in uniform norm over~$X$. This ensures that the $\omega_j$'s are symplectic and that $[\omega]$ is a convex combination $[\omega] = \sum \lambda_j[\omega_j]$ for some $\lambda_1,\dots,\lambda_{b_2}>0$ with $0<\sum\lambda_j<1$ and $\sum\lambda_j\simeq 1$. Since $u=\omega - \sum\lambda_j\omega_j$ is a very small exact $2$-form $u$, we can in fact 
achieve $u=0$ after replacing one of the $\omega_j$'s by $\omega_j+\lambda_j^{-1}u$. Also, after replacing each $\omega_j$ by an integer multiple, we obtain $\omega=\sum \lambda_j\omega_j$ where $\omega_j$ is a system of integral symplectic forms and $\lambda_j>0$, $\sum\lambda_j<1$. After replacing $\omega_{b_2}$ by $b-b_2+1$ identical copies $\omega_j=\omega_{b_2}$, we can assume that $\omega=\sum_{1\le j\le b}\lambda_j\omega_j$, $\lambda_j>0$.

According to the effective version of Tischler's theorem stated by Gromov \cite{gromov} (page 335), for every $j=1,\dots,b$, there exists a symplectic embedding $g_j : (X,\omega_j) \rightarrow (\mathbb{CP}^k, \gamma_{\rm FS})$ with $k=2n+1$, where $\mathbb{CP}^k$ denotes the complex projective space of (complex) dimension $k$ and $\gamma_{\rm FS}$ denotes the Fubini-Study form on $\mathbb{CP}^k$. Then $g:=(g_1,\dots,g_{b})$ is a symplectic embedding of $(X,\omega)$ into the K\"ahler complex projective manifold 
$$(Y,\gamma_\lambda):=\bigg(\prod_{j=1}^{b} \mathbb{CP}^k~,~\sum_{j=1}^{b} \lambda_j\pr_j^*\gamma_{\rm FS}\bigg).$$
Here $\pr_j : \prod_{j=1}^{b} \mathbb{CP}^k \rightarrow \mathbb{CP}^k$ denotes the $j$-th projection. By construction, we have
$$
\omega=\sum_{j=1}^{b}\lambda_j\omega_j=g^*\gamma_\lambda.
$$
Let $NX$ be the normal bundle of $g(X)$ in $Y$. Here, we identify the normal bundle with a subbundle of $TY_{|g(X)}$ by using the symplectic structure, namely we define
$$NX=\big\{\eta\in TY\,;\,\forall\xi\in TX,~
\gamma_\lambda(g_*\xi,\eta)=0\big\};$$
the positivity condition $\gamma_\lambda(g_*\xi,g_*J_X\xi)=\omega(\xi,J_X\xi)>0$ for $\xi\ne 0$ implies that we indeed have $g_*TX\cap NX=\{0\}$, and thus $TY_{|g(X)}=g_*TX\oplus NX$. Although we will not make use of this, one can see that the Riemannian and symplectic normal bundles are linked by the relation $NX^{\rm riem}=J_{\rm st}NX^{\rm symp}$ where $J_{\rm st}$ is the standard complex structure of $Y$, the latter being unrelated to~$J_X$. We embed $X$ in $Y\times\overline{Y}$ by the ``diagonal'' embedding $x \mapsto G(x) = (g(x),\overline{g(x)})$ (set theoretically $\overline Y$ coincides with $Y$, but we take the conjugate complex structure $J_{\overline Y}=-J_Y$). We have a decomposition of the tangent bundle given, for $x\in X$, by
$$T(Y\times\overline Y)_{G(x)}=TX_x \oplus \overline{TX}_x \oplus NX_x \oplus \overline{NX}_x$$ 
where the first factor $TX_x$ consists of diagonal vectors $(g_*\xi,\overline{g_*\xi})$ (with the slight abuse of notation consisting in identifying $X$ and $g(X)\subset Y$), the second consists of ``antidiagonal'' vectors $(g_*\xi,-\overline{g_*\xi})$, and the two normal bundle copies are $\pr_1^*NX$ and $\pr_2^*\overline{NX}$. With respect to this decomposition, we define a complex structure $\tilde J(x)$ on $T(Y\times\overline Y)_{G(x)}$ by
$$
\tilde J(x) = J_X(x) \oplus (-J_X(x)) \oplus J_{NX \oplus \overline{NX}}(x)
$$
where $J_{NX \oplus \overline{NX}}$ is the tautological almost complex structure $(u,v) \mapsto (-\overline v,\overline u)$ on $NX \oplus\overline{NX}$. Clearly, we have $G^*\tilde J=J_X$ [in fact $Y\times \overline Y$ is just the complexification of the underlying real algebraic structure $Y^{\mathbb R}$ on $Y$, under the anti-holomorphic involution $(x,y)\mapsto (y,x)$]. Let us consider the K\"ahler structure 
$$\tilde\gamma=\frac{1}{2}\big(\pr_1^*\gamma_\lambda-\pr_2^*\gamma_\lambda\big)\quad\hbox{on $Y\times\overline Y$}.$$
Notice that $-\gamma_\lambda$ is in fact a K\"ahler structure on $\overline Y$ and that $\omega=g^*\gamma_\lambda={\overline g}^*(-\gamma_\lambda)$.
We thus have $G^*\tilde\gamma=g^*\gamma_\lambda= \omega$, and further claim
that $\tilde J$ is compatible with $\tilde\gamma$. In order to check this, let us take two tangent vectors $(\xi_1,\overline\xi_2),\;(\eta_1,\overline\eta_2)\in TY\times T\overline Y$, and write $\xi=\xi'+\xi''$ for the decomposition of $\xi\in TY$ along $TX\oplus NX$. We find
\begin{eqnarray*}
\tilde J(\xi_1,\overline\xi_2)&=&\tilde J\left(\frac{1}{2}(\xi'_1+\xi'_2,\overline\xi'_1+\overline\xi'_2)+\frac{1}{2}(\xi'_1-\xi'_2,\overline\xi'_2-\overline\xi'_1)+(\xi_1'',0)+(0,\overline\xi''_2)\right)\\
&=&\frac{1}{2}\left(J_X(\xi'_1+\xi'_2),-\overline{J_X(\xi'_1+\xi'_2)}\right)+\frac{1}{2}\left(-J_X(\xi'_1-\xi'_2),\overline{J_X(\xi'_2-\xi'_1)}\right)\\
&&{}+(-\xi_2'',0)+(0,\overline\xi''_1)\\
\noalign{\vskip6pt}
&=&\left(J_X\xi'_2-\xi_2'',\overline{-J_X\xi'_1+\xi''_1}\,\right).
\end{eqnarray*}
Since $J_X$ and $\omega$ are compatible and $\omega=g^*\gamma_\lambda$, we have
(with our abuse of notation $\xi'_1\simeq g_*\xi'_1$)
$$
\gamma_\lambda(J_X\xi'_1,J_X\eta'_1)=
\omega(J_X\xi'_1,J_X\eta'_1)=\omega(\xi'_1,\eta'_1)=\gamma_\lambda(\xi'_1,\eta'_1)
$$
and a similar formula for $(\xi'_2,\eta'_2)$. We infer from this and from
the $\gamma_\lambda$-orthogonality of the decomposition $TX\oplus NX$ that
\begin{eqnarray*}
\tilde\gamma\left(\tilde J(\xi_1,\overline\xi_2),
\tilde J(\eta_1,\overline\eta_2)\right)
&=&\frac{1}{2}
\gamma_\lambda\left(J_X\xi'_2-\xi_2'',J_X\eta'_2-\eta_2''\right)-\frac{1}{2}
\gamma_\lambda\left(\overline{-J_X\xi'_1+\xi''_1}\,,\,
\overline{-J_X\eta'_1+\eta''_1}
\right)\\
&=&\frac{1}{2}\gamma_\lambda\left(J_X\xi'_2-\xi_2'',J_X\eta'_2-\eta_2''\right)+
\frac{1}{2}
\gamma_\lambda\left(-J_X\xi'_1+\xi''_1,-J_X\eta'_1+\eta''_1\right)\\
&=&\frac{1}{2}\gamma_\lambda\left(\xi'_2,\eta'_2\right)
+\frac{1}{2}\gamma_\lambda\left(\xi''_2,\eta_2''\right)+
\frac{1}{2}\gamma_\lambda\left(\xi'_1,\eta'_1\right)+
\frac{1}{2}\gamma_\lambda\left(\xi''_1,\eta''_1\right)\\
&=&\frac{1}{2}\gamma_\lambda\left(\xi_1,\eta_1\right)
-\frac{1}{2}\gamma_\lambda\left(\overline\xi_2,\overline\eta_2\right)\\
\noalign{\vskip3pt}
&=&\tilde\gamma\left((\xi_1,\overline\xi_2),(\eta_1,\overline\eta_2)\right).
\end{eqnarray*}
This proves that $\tilde J$ is compatible with the restriction of the K\"ahler structure $\tilde\gamma$ to $G(X)$.

We now construct $Z_{n,b,k}$, following essentially the same lines as for the proof of Theorem~\ref{theorem-univ-embed}. We view $\tilde Y=Y\times \overline Y$ as a real algebraic manifold equipped with a real algebraic symplectic form $\tilde\gamma$ (though $\tilde Y$ is in fact complex projective and $\tilde\gamma$ K\"ahler). We consider
$$W =\left\{(w,S,J) \in {\tilde Y} \times G_{\mathbb R}(T{\tilde Y},2n) \times 
\End_{\mathbb R}(T{\tilde Y})\,;\;J^2 = -I,\ J^*\tilde\gamma=\tilde\gamma,\ J(S) \subset S\right\}$$
and its complexification $W^{\mathbb C}$ which is defined by the same algebraic equations over $\mathbb C$. We define $Z_{n,b,k}$ to be the component of $W^{\mathbb C}$ for which
$J$ and $J_{|S}$ have balanced $+i$ and $-i$ eigenspaces $\Sigma'\oplus\Sigma''=T\tilde Y^{\mathbb C}$ and $S'\oplus S''=S$. There is a natural projection
$$\pi=\pi_{n,b,k}:Z_{n,b,k}\to \tilde Y^{\mathbb C}=Y^2\times \overline Y^2,$$
and $\tilde\gamma$ can be complexified into a K\"ahler form $\tilde\gamma^{\mathbb C}$ on $\tilde Y^{\mathbb C}$ which restricts to $\tilde\gamma$ on the real~part. Our construction produces a canonical lifting $f:X\to Z_{n,b,k}$ of $G:X\to \tilde Y=\tilde Y^{\mathbb R}\subset\tilde Y^{\mathbb C}$. Then, as above, the bundle ${\mathcal D}_{n,b,k}$ of tangent vectors $\zeta\in TZ_{n,b,k}$ such that \hbox{$\pi_*\zeta\in S'\oplus\Sigma''$} defines an algebraic distribution transverse to $G(X)$, and additionally $\beta=\pi^*\tilde\gamma^{\mathbb C}$ is a transverse K\"ahler form that induces the given symplectic structure $\omega$ on $X$. A calculation of dimensions shows that $\dim_{\mathbb C}\tilde Y^{\mathbb C}=4bk$ and
$\dim_{\mathbb C}\tilde Z_{n,b,k}=2bk(2bk+1)+2n(2bk-n)$, since the symplectic twistor space $\{J\}$ in dimension $2m=4bk$ has dimension $m(m-1)$, and we have additionally to select $n$-dimensional subspaces $S'$, $S''$ in the given $2bk$ dimensional eigenspaces of $J$. The above variety $Z_{n,b,k}$ is merely quasi-projective, but we can of course replace it with a relative projective compactification over $\smash{\tilde Y^{\mathbb C}}$, and extend  ${\mathcal D}_{n,b,k}$ as a torsion free algebraic subsheaf of $TZ_{n,b,k}$.\qed

\begin{remark}\label{genuine-kahler-remark}{\rm In the above result, we could even take $\beta$ to be a genuine K\"ahler metric on $Z_{n,b,k}$. In fact $Z_{n,b,k}$ is (quasi-)projective, so it possesses a K\"ahler metric $\gamma$. One can easily conclude by a perturbation argument, after replacing $\beta$ by $\beta+\varepsilon\gamma$ and letting $\omega$ vary  in a neighborhood of the original symplectic form on~$X$.}
\end{remark}

\section{A weak version of Bogomolov's conjecture: proof of Theorem~\ref{bogomolov-thm}}\label{section-bogomolov}
We start with a formula computing the Nijenhuis tensor of the almost complex structure $J_f$ given by an embedding $f:X\hookrightarrow Z$ transverse to a holomorphic distribution $\mathcal D$. Recall that for any smooth (real) vector fields $\zeta$, $\eta$ of $TX$, the Nijenhuis tensor $N_J$ of an almost complex structure $J$ is defined in terms of Lie brackets of $\zeta^{0,1}=\frac{1}{2}(\zeta+iJ\zeta)$, $\eta^{0,1}=\frac{1}{2}(\eta+iJ\eta)$ as
$$
N_J(\zeta,\eta)=4\Re\,[\zeta^{0,1},\eta^{0,1}]^{1,0}=[\zeta,\eta]-[J\zeta,J\eta]
+J[\zeta,J\eta]+J[J\zeta,\eta].
$$

\begin{proposition}\label{integrable-prop}If $\theta$ denotes the torsion operator of the distribution $\mathcal D$ on $Z$, the Nijenhuis tensor of the almost complex structure $J_f$ induced by a transverse embedding $f:X\hookrightarrow Z$ is given by
\begin{equation}\label{nij-eq}
\forall z \in X,\ \forall \zeta,\eta \in T_zX,\quad N_{J_f}(\zeta,\eta) = 4\,\theta(\overline{\partial}_{J_f}f(z)\cdot \zeta, \overline{\partial}_{J_f}f(z)\cdot \eta).
\end{equation}
\end{proposition}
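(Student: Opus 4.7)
The plan is to exploit the intrinsic characterization of $T^{0,1}_{J_f}X$ that follows from the very definition of $J_f$, and to compute the relevant Lie brackets of $(0,1)$-vector fields after lifting them to $Z$. Since $J_f$ is induced by the $J_Z$-structure on $TZ/\mathcal D$ through the transversality isomorphism $f_*:TX\simeq f^*(TZ/\mathcal D)$, the projection $\pi:f^*TZ\to f^*(TZ/\mathcal D)$ is $\mathbb C$-linear, and a complexified vector $\xi\in T_xX\otimes\mathbb C$ is of type $(0,1)$ for $J_f$ if and only if $\pi(f_*\xi)\in(TZ/\mathcal D)^{0,1}$. Splitting $f_*\xi=\partial_{J_f}f(\xi)+\overline\partial_{J_f}f(\xi)$ into $T^{0,1}Z\oplus T^{1,0}Z$-components, this condition forces $\overline\partial_{J_f}f(\xi)\in\mathcal D\cap T^{1,0}Z=:\mathcal D^{1,0}$ for $\xi$ of type $(0,1)$, recovering the fact (already observed in Section~\ref{deform-section}) that $\overline\partial_{J_f}f$ takes values in $f^*\mathcal D$.

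Given $\xi_1,\xi_2\in T^{0,1}_{J_f,x_0}X$, I would extend them to local $(0,1)$-vector fields on $X$ near $x_0$, decompose $f_*\xi_i=A_i+B_i$ with $A_i\in T^{0,1}Z$ and $B_i\in\mathcal D^{1,0}$ along $M=f(X)$, and extend $A_i,B_i$ smoothly into a neighborhood of $z_0=f(x_0)$ in $Z$ while staying in the respective sub-bundles. Since $A_i+B_i$ is tangent to $M$ along $M$, the bracket $[A_1+B_1,A_2+B_2]|_M$ equals $f_*[\xi_1,\xi_2]|_M$. Expanding and projecting via $\pi$ then gives: $[A_1,A_2]\in T^{0,1}Z$ by integrability of the antiholomorphic tangent bundle on the complex manifold $Z$; choosing a holomorphic local frame $(e_k)$ of $\mathcal D^{1,0}$ near $z_0$ and writing $B_j=\sum b_j^k e_k$, the tensorial character of $\pi[A_i,\cdot]$ on sections of $\mathcal D$ combined with $[A_i,e_k]\in T^{0,1}Z$ (valid because $e_k$ is holomorphic and $A_i$ antiholomorphic) yields $\pi[A_i,B_j]\in(TZ/\mathcal D)^{0,1}$; and finally $\pi[B_1,B_2]=\theta(B_1,B_2)$ by the very definition of the torsion, with $\theta(\mathcal D^{1,0},\mathcal D^{1,0})\subset(TZ/\mathcal D)^{1,0}$ and $\theta(\mathcal D^{1,0},\mathcal D^{0,1})=0$ following from the $\mathbb C$-bilinearity of $\theta$ on the holomorphic bundle $\mathcal D$.

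The $(1,0)$-component of $\pi f_*[\xi_1,\xi_2]$ therefore reduces to $\theta(\overline\partial_{J_f}f(\xi_1),\overline\partial_{J_f}f(\xi_2))$, and transporting through $\pi\circ f_*\simeq\mathrm{id}$ I obtain $[\xi_1,\xi_2]^{1,0}_{J_f}=\theta(\overline\partial_{J_f}f(\xi_1),\overline\partial_{J_f}f(\xi_2))$. Plugging into $N_{J_f}(\zeta,\eta)=4\Re[\zeta^{0,1},\eta^{0,1}]^{1,0}$ for real $\zeta,\eta$, the operation of taking the real part recombines (using $\theta(\mathcal D^{1,0},\mathcal D^{0,1})=0$) the holomorphic and antiholomorphic torsion contributions into $\theta(\overline\partial_{J_f}f\zeta,\overline\partial_{J_f}f\eta)$, yielding~\eqref{nij-eq}. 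The main technical point will be the careful bookkeeping between the bidegrees on $X$ (relative to $J_f$) and on $Z$ (relative to $J_Z$) and their compatibility under the identification; a more computational alternative via the coordinate formulas \eqref{torsion-expr} and \eqref{formula-JF} would reach the same conclusion but less transparently.
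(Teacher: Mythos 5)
Your argument is correct, and it takes a genuinely different route from the paper. The paper proves the formula by a direct coordinate computation: it places itself in the adapted chart of Section~\ref{deform-section} where $M=f(X)$ is a graph $z''=g(z')$ and $\mathcal D$ is encoded by the matrix $a(z)$ with $a(z_0)=0$, then expands $N_{J_F}(\zeta,\eta)=[\zeta,\eta]-[J\zeta,J\eta]+J[\zeta,J\eta]+J[J\zeta,\eta]$ for constant-coefficient fields using the explicit expression \eqref{formula-JF} for $J_F$, and identifies the antisymmetrized derivative $da$ with $\theta$ via \eqref{torsion-expr}. You instead work intrinsically: you characterize $T^{0,1}_{J_f}X$ by the condition $\pi(f_*\xi)\in(TZ/\mathcal D)^{0,1}$, lift two $(0,1)$-fields to $Z$ as $A_i+B_i$ with $A_i\in T^{0,1}Z$ and $B_i=\overline\partial_{J_f}f(\xi_i)\in\mathcal D^{1,0}$, and observe that of the four bracket terms only $[B_1,B_2]$ survives modulo $\mathcal D$ and modulo $(0,1)$-types, giving exactly $\theta(B_1,B_2)$ by the definition of the torsion as the bracket mod $\mathcal D$. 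All the individual claims check out: the restriction of the bracket to $M$ is extension-independent because both lifts are tangent to $M$ along $M$; $[A_1,A_2]\in T^{0,1}Z$ by integrability of $J_Z$; $\pi[A_i,B_j]\in(TZ/\mathcal D)^{0,1}$ because a holomorphic frame of $\mathcal D$ brackets with $(0,1)$-fields into $T^{0,1}Z$ and the derivative terms stay in $\mathcal D$; and $\pi[B_1,B_2]=\theta(B_1,B_2)$ since the non-tensorial correction terms lie in $\mathcal D^{1,0}$. Your approach buys conceptual transparency (it makes visible \emph{why} the torsion is the only obstruction, and immediately explains the remark after Theorem~\ref{bogomolov-thm} that $\Im(\overline\partial_Jf)\subset I_{\mathcal D}$ is equivalent to integrability), at the cost of some bookkeeping of normalization conventions in the final passage from $\theta((\overline\partial f\zeta)^{1,0},(\overline\partial f\eta)^{1,0})$ back to real arguments — a point on which the paper's coordinate computation is equally convention-dependent, so this is not a gap. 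The paper's approach, by contrast, recycles the machinery already set up for the variation formula of Proposition~\ref{diff-cplx-struct-expr} and keeps all constants explicit.
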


\begin{proof} We keep the same notation as in Section~\ref{deform-section}. Especially, we put $M=f(X)$, and near any point $x_0 \in X$, we write $f = F \circ \varphi$ where $\varphi$ is a local diffeomorphism defined in a neighborhood $V$ of $x_0$ and $F : \varphi \ni z' \mapsto (z',g(z')) \in Z$. According to (\ref{formula-JF}), the almost complex structure $J_F$ on $\varphi(V)\subset M$ and the corresponding one $J_f$ on $V\subset X$ are given by
$$
\forall z' \in \varphi(V),\ \forall \zeta \in T_zM,\ J_F(z) \cdot \zeta = i\zeta -2dF(z')^{-1}\pi_{Z,\mathcal D,M}(ia(F(z'))(\overline{\partial} g(z')\cdot \zeta),0)
$$
and  $J_f(x) = d\varphi(x)^{-1} \circ J_F(\varphi(x)) \circ d\varphi(x)$ for every $x \in V$. Thus, by construction, we have $\overline{\partial}_J f= \overline{\partial}_{J_F} F \circ d\varphi$ (i.e.\ $d\varphi$ is compatible with $J_f$ and $J_F$), and (\ref{nij-eq}) is equivalent to
$$
\forall z \in M,\ \forall \zeta,\eta \in T_{z}M,\ N_{J_F}(\zeta,\eta) = 4\,\theta(\overline{\partial}_{J_F}F(z)\cdot \zeta, \overline{\partial}_{J_F}F(z)\cdot \eta).
$$
Let $\zeta = \sum_{j=1}^n \left(\zeta_j \frac{\partial}{\partial z_j}+ \overline{\zeta_j} \frac{\partial}{\partial \overline{\zeta}_j}\right)$,
$\eta = \sum_{j=1}^n \left(\eta_j \frac{\partial}{\partial z_j}+ \overline{\eta_j} \frac{\partial}{\partial \overline{\eta}_j}\right)$ 
be real vector fields in $z'\mapsto T_{z'}M$. For the sake of clarity we denote by $J_{\rm st}\zeta$ the vector field $i\zeta$ associated with the ``standard'' almost complex structure of $\mathbb{C}^n$, so that
$$J_{\rm st}\zeta=\sum_{j=1}^n\left(i\zeta_j \frac{\partial}{\partial z_j}-i\overline{\zeta_j} \frac{\partial}{\partial \overline{\zeta}_j}\right).$$
Without loss of generality (and in order to simplify calculations), we assume $\zeta,\,\eta$ to have constant coefficients $\zeta_j,\,\eta_j$. At the central point $z'_0 \in \varphi(V)$ we have $a(F(z'_0))=0$ and $J_F=J_{\rm st}$, hence (omitting vanishing terms such as $[\zeta,\eta]$), the Nijenhuis tensor of $J_F$ at~$z'_0$ is given by
$$
\begin{array}{lll}
N_{J_F}(\zeta,\eta)_{|z'_0} \kern-5pt& = \kern-5pt&{}
-J_{\rm st}\zeta\cdot \left(-2dF(z')^{-1} \pi_{Z,\mathcal D,M}(ia(F(z'))\cdot (\overline{\partial}g(z')\cdot \eta),0)\right)_{|z'=z'_0}\\
 \noalign{\vskip5pt}
 & &{}+J_{\rm st}\eta\cdot \left(-2dF(z')^{-1} \pi_{Z,\mathcal D,M}(ia(F(z'))\cdot (\overline{\partial}g(z')\cdot \zeta),0)\right)_{|z'=z'_0}\\
 \noalign{\vskip5pt}
 & &{} +J_{\rm st}\left[\zeta\cdot \left(-2dF(z')^{-1} \pi_{Z,\mathcal D,M}(ia(F(z'))\cdot (\overline{\partial}g(z')\cdot \eta),0)\right)\right]_{|z'=z'_0}\\
 \noalign{\vskip5pt}
 & &{} +J_{\rm st}\left[-\eta\cdot \left(-2dF(z')^{-1} \pi_{Z,\mathcal D,M}(ia(F(z'))\cdot (\overline{\partial}g(z')\cdot \zeta),0)\right)\right]_{|z'=z'_0}.
\end{array}
$$
We recall that by our normalization $a(F(z'_0)) = 0$ and $dF(z'_0) \circ \pi_{Z,\mathcal D,M}(z'_0) = {\rm id}$. Hence for all $\zeta,\,\eta \in T_{z'_0}M$ we get at $z'_0$
$$
\begin{array}{lll}
 N_{J_F}(\zeta,\eta) & = & \left(2ida(z'_0)(dF(z'_0)\cdot J_{\rm st}\zeta)\cdot (\overline{\partial}g(z'_0)\cdot \eta),0\right)\\
 \noalign{\vskip7pt}
 & &{} +J_{\rm st}\left(-2ida(z'_0)(dF(z'_0)\cdot \zeta)\cdot \overline{\partial}g(z'_0)\cdot \eta,0\right)\\
  \noalign{\vskip7pt}
 & &{} - \left(2ida(z'_0)(dF(z'_0)\cdot J_{\rm st}\eta)\cdot \overline{\partial}g(z'_0)\cdot \zeta,0\right)\\
  \noalign{\vskip7pt}
 & &{} - J_{\rm st}\left(-2ida(z'_0)(dF(z'_0)\cdot \eta)\cdot \overline{\partial}g(z'_0)\cdot \zeta,0\right).\\
\end{array}
$$
Since $J_{\rm st} \circ da(F(z'_0)) \circ J_{\rm st} = -da(F(z'_0))$, we infer
\begin{eqnarray*}
&&\left(da(z'_0)(dF(z'_0)\cdot J_{\rm st}\zeta)\cdot (\overline{\partial}g(z'_0)\cdot \eta)-J_{\rm st}(da(z'_0)(dF(z'_0)\cdot \zeta)) \cdot \overline{\partial}g(z'_0)\cdot \eta,0\right)\\
 \noalign{\vskip5pt}
&&\kern50pt{}= -\left(J_{\rm st}\left(da(F(z'_0))\left(dF(z'_0)+J_{\rm st} \circ dF(z'_0) \circ J_{\rm st}\right)\cdot \zeta\right)\cdot (\overline{\partial}g(z'_0)\cdot \eta),0\right).
\end{eqnarray*}
Finally, since $dF(z'_0)+J_{\rm st} \circ dF(z'_0) \circ J_{\rm st} = 2\overline{\partial}F(z'_0) = 2\overline{\partial}g(z'_0)$ we obtain for every $\zeta,\,\eta \in T_{z'_0}M$
$$
\begin{array}{lll}
N_{J_F}(\zeta,\eta) & = &{} -4i\left(J_{\rm st}\left(da(F(z'_0))(\overline{\partial}_{J_F}F(z'_0)\cdot \zeta)\cdot (\overline{\partial}_{J_F}F(z'_0)\cdot \eta)\right),0\right) \\
\noalign{\vskip7pt}
& &{} +4i\left(J_{\rm st}\left(da(F(z'_0))(\overline{\partial}_{J_F}F(z'_0)\cdot \eta)\cdot (\overline{\partial}_{J_F}F(z'_0)\cdot \zeta)\right),0\right)\\
\noalign{\vskip7pt}
 & = &{}-4i\,J_{\rm st}\, \theta\left(\overline{\partial}_{J_F}F(z'_0)\cdot \zeta, \overline{\partial}_{J_F}F(z'_0)\cdot \eta \right),
\end{array}
$$
which yields the expected formula.\qed
\vskip 0.3cm

\noindent{\em Proof of Theorem~\ref{bogomolov-thm}.}
Let $(X,J)$ be a complex manifold and let $f:X\hookrightarrow Z_{n,k}$ be an embedding into the universal space $(Z_{n,k},{\mathcal D}_{n,k})$, such that $J_f=J$. Then $J_f$ is an integrable structure, hence $N_{J_f}$ vanishes identically. It follows from (\ref{nij-eq}) that $\Im(\overline{\partial}_{J}f)$ is contained in the isotropic locus of $\theta$, namely the set of $n$-dimensional subspaces $S$ in the Grassmannian bundle ${\rm Gr}({\mathcal D}_{n,k},n)\to Z_{n,k}$ such that $\theta_{z|S\times S}=0$ at any point $z\in Z_{n,k}$.
\end{proof}

\section{Relation to Nash Algebraic approximations of holomorphic foliations\label{nash-approx-foliations}} We prove in this section Proposition~\ref{runge-prop}. Let $X$ be a compact complex manifold. We denote by $J_X$ the corresponding (almost) complex structure. We first embed $X$ diagonally into $X \times \overline{X}$. This is a totally real embedding with normal bundle $T\overline{X}$. 
If we denote by $\varphi$ the embedding then $\varphi(X)$ is totally real and compact in $X \times \overline{X}$. Moreover if $\pr_1 : X \times \overline{X} \to X$ is the projection on the first factor then $\Ker(d\pr_1)=\pr_2^*T\overline X$ defines a holomorphic foliation of $X \times \overline{X}$ and, quite trivially, $\varphi(X)$ is transverse to $\Ker(d\pr_1)$.

Since $X\times\overline X$ is a complexification of the real analytic manifold $\varphi(X)$, a well known result of Grauert \cite{grauert}) shows that $\varphi(X)$ possesses a Stein tubular neighborhood $U$ in $X\times\overline X$ (by Nirenberg and Wells \cite{nir-wells}, every totally real submanifold of a complex manifold has in fact a fundamental system of Stein neighborhoods, and in the compact case they can be obtained as tubes $U_\varepsilon=\{d(z,w)<\varepsilon\}$ for the geodesic distance associated with any hermitian metric on~$X$). According to a result of E.L.~Stout \cite{stout}, the Stein neighborhood $U$ can be shrunk to a Runge open subset $U'\Subset U$ that is biholomorphic to a bounded polynomial polyhedron $\Omega$ in an affine complex algebraic manifold $Z=\{P_j(z)=0\}\subset{\mathbb C}^N$, say $\Omega=\{z\in Z\,;\;|Q_j(z)|<1\}\Subset Z$, where $P_j,\,Q_j\in{\mathbb C}[z_1,\ldots,z_N]$. Let \hbox{$\psi:U'\to \Omega$} be this biholomorphism, let $f=\psi\circ\varphi:X\hookrightarrow Z$ be the 
resulting real analytic embedding and let $J_Z$ be the complex structure on $Z$. We denote by $M=f(X)=\psi(\varphi(X))$ the image of $X$ by $f$ and by $\mathcal F=f_*(\Ker(d\pr_1)_{|U'})$ the direct image of $\Ker(d\pr_1)$ restricted to~$U'$. Then $\mathcal F\subset TZ_{|\Omega}$ is a holomorphic foliation of codimension $n$ on~$\Omega$, and $M$ is transverse to $\mathcal F$. By construction, the complex structure $J_M^{Z,\mathcal F}$ on $M$ induced by $(TZ/\mathcal F,J_Z)$ on $M$ coincides with~$f_*(J_X)$. Now, we invoke the following 
\begin{proposition}\label{affine-thickening}There exists a Runge open subset $\tilde\Omega\subset{\mathbb C}^N$ such that $\Omega=\tilde\Omega\cap Z$, a holomorphic retraction $\rho:\tilde\Omega\to\Omega$, and a holomorphic foliation $ \tilde{\mathcal F}$ of codimension $n$ on $\tilde\Omega$ such that
$M\subset\Omega$ is transverse to $\tilde{\mathcal F}$.
\end{proposition}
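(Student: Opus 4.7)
The plan is to construct $\tilde\Omega$ as a bounded polynomial polyhedron neighborhood of $\Omega$ in $\mathbb C^N$ lying inside a holomorphic tubular neighborhood of $Z$, take $\rho$ as the restriction of the tubular retraction, and define $\tilde{\mathcal F}$ as the pullback of $\mathcal F$ by $\rho$.

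First, by the Docquier--Grauert theorem on holomorphic retractions onto closed complex submanifolds of Stein manifolds (applied to the smooth closed affine submanifold $Z\subset\mathbb C^N$), there exist an open Stein neighborhood $V\supset Z$ and a holomorphic retraction $\rho_0:V\to Z$. Since $\rho_0$ restricts to the identity on $Z$, on a sufficiently narrow tube $\{|P_l|<\varepsilon,\;|z_k|<R\}$, with $R$ chosen so that $\overline\Omega\subset\{|z_k|<R\}$, one has $\{|P_l|<\varepsilon,\;|z_k|<R\}\subset V$, and the perturbation $|Q_j\circ\rho_0-Q_j|$ can be made as small as we want on that tube by further shrinking $\varepsilon$.

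Second, I would define
$$\tilde\Omega\,:=\,\{z\in\mathbb C^N\,:\,|Q_j(z)|<1,\;|P_l(z)|<\varepsilon,\;|z_k|<R\},$$
which is a bounded open polynomial polyhedron in $\mathbb C^N$, hence Runge. The constraints $|P_l|<\varepsilon$ (vacuous on $Z$) and $|z_k|<R$ (vacuous on $\overline\Omega$) yield $\tilde\Omega\cap Z=\{z\in Z\,:\,|Q_j(z)|<1\}=\Omega$. For $\varepsilon$ small one has $\tilde\Omega\subset V$, so $\rho:=\rho_0|_{\tilde\Omega}$ is a holomorphic map into $Z$; the perturbation estimate then yields $\rho(\tilde\Omega)\subset\Omega$ after a harmless a~priori rescaling of the $Q_j$'s (using $M\Subset\Omega$ to absorb the $O(\varepsilon)$ slack). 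Finally I set
$$\tilde{\mathcal F}_z\,:=\,(d\rho_z)^{-1}\big(\mathcal F_{\rho(z)}\big)\qquad\text{for }z\in\tilde\Omega,$$
a holomorphic distribution whose leaves are $\rho^{-1}(L)$ for leaves $L$ of $\mathcal F$. It is integrable (pullback of an integrable distribution by a submersion), holomorphic, and of codimension $n$ since $\rho$ is a submersion and $\mathcal F$ has codimension $n$. Transversality of $M$ to $\tilde{\mathcal F}$ comes for free at each $x\in M\subset\Omega$: since $\rho(x)=x$ one has $T_x\mathbb C^N=T_xZ\oplus\ker d\rho_x$ and $\tilde{\mathcal F}_x=\mathcal F_x\oplus\ker d\rho_x$, which combined with the given $T_xM\oplus\mathcal F_x=T_xZ$ yields $T_xM\oplus\tilde{\mathcal F}_x=T_x\mathbb C^N$.

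The main obstacle will be the delicate juggling in step two between three requirements on $\tilde\Omega$: being Runge (handled by the polynomial polyhedron form), intersecting $Z$ in \emph{exactly} $\Omega$, and being mapped into $\Omega$ by the retraction $\rho_0$. Since $\rho_0$ is only an $O(\varepsilon)$-perturbation of the identity transversally to $Z$, the exact polyhedron $\{|Q_j|<1\}$ cannot be preserved by $\rho_0$ without a small mismatch; one absorbs this by a slight rescaling of the $Q_j$'s at the outset, which is harmless because $M$ is compactly contained in $\Omega$. If a strictly literal reading of $\Omega=\tilde\Omega\cap Z$ is required, then the same polyhedron data yields the desired conclusion after this rescaling, because in the application to Proposition~\ref{runge-prop} one is free to replace $\Omega$ by any polynomial polyhedron containing $M$ in its interior.
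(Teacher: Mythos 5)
Your overall strategy is the right one and is close in spirit to the paper's: produce a holomorphic retraction onto $Z$, thicken $\Omega$ to a polynomially convex open set $\tilde\Omega$ of ${\mathbb C}^N$, and pull back $\mathcal F$ by the retraction; your transversality and codimension computations at the end are correct. The two constructions differ in the ingredients. For the retraction, the paper does not invoke Docquier--Grauert: it splits the normal bundle sequence $0\to TZ\to T{\mathbb C}^N_{|Z}\to NZ\to 0$ algebraically (using $H^1_{\rm alg}(Z,\Hom(NZ,TZ))=0$ for $Z$ affine), so that $h(z,\zeta)=z+\sigma(z)\cdot\zeta$ is an algebraic biholomorphism near the zero section of $NZ$ and $\rho=\pi\circ h^{-1}$ is a \emph{Nash algebraic} retraction. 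Your purely holomorphic retraction suffices for the statement as written, but the paper's version keeps everything in the algebraic category, which fits the theme of Section~5.

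The more substantive difference concerns the point you yourself flag as the ``main obstacle''. With a constant-width tube $\{|Q_j|<1,\ |P_l|<\varepsilon,\ |z_k|<R\}$, points where some $|Q_j|$ is close to $1$ are pushed by $\rho$ slightly outside $\Omega$, so $\tilde{\mathcal F}=(d\rho)^{-1}\mathcal F$ is not defined on all of $\tilde\Omega$; your rescaling of the $Q_j$'s only trades this for a mismatch between $\tilde\Omega\cap Z$ and the polyhedron into which $\rho$ maps, so you never get the literal conjunction ``$\Omega=\tilde\Omega\cap Z$ and $\rho(\tilde\Omega)\subset\Omega$'' for one and the same $\Omega$ -- you correctly observe that this weaker version still feeds into Proposition~\ref{runge-prop}. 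The paper avoids the issue entirely by pinching the tube at $\partial\Omega$: it takes
$$\tilde\Omega=\big\{z\in{\mathbb C}^N\ ;\ |P(z)|^2<\varepsilon(1+|z|^2)^{-A},\ \ |Q_j(z)|^2+C|P(z)|<1\big\},\qquad |P|^2=\textstyle\sum_l|P_l|^2 .$$
On $Z$ one has $P=0$, so $\tilde\Omega\cap Z=\{|Q_j|<1\}=\Omega$ exactly; off $Z$, the distance from $z$ to $\rho(z)$ is $O(|P(z)|)$ on the bounded set in question, hence $|Q_j(\rho(z))|^2\le|Q_j(z)|^2+C'|P(z)|<1$ once $C\ge C'$, giving $\rho(\tilde\Omega)\subset\Omega$ on the nose. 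The price is that $\tilde\Omega$ is no longer a polynomial polyhedron in the strict sense, so its Runge property has to be argued via the plurisubharmonicity (and algebraic growth) of $\log|P|^2+A\log(1+|z|^2)$ and of $|Q_j|^2+C|P|$, rather than quoted for free as in your version. If you want the proposition exactly as stated, you should adopt inequalities of this coupled type; otherwise your argument is acceptable provided you restate the conclusion in the weakened form you describe.
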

\begin{proof}The normal bundle sequence
$$
0\to TZ\to T{\mathbb C}^N_{|Z}\to NZ\to 0
$$
admits an algebraic splitting $\sigma:NZ\to T{\mathbb C}^N_{|Z}$ since $H^1_{\rm alg}(Z,\Hom(NZ,TZ))=0$ ($Z$ being affine). Then $h(z,\zeta)=z+\sigma(z)\cdot \zeta$, $\zeta\in NZ_z$, defines an algebraic biholomorphism $h$ from a tubular neighborhood $V$ of the zero section of $NZ$ onto a neighborhood $h(V)$ of $Z$ in ${\mathbb C}^N$. Clearly, if $\pi:NZ\to Z$ is the natural projection, $\rho=\pi\circ h^{-1}$ is a Nash algebraic retraction from $\tilde V:=h(V)$ onto $Z$. We take 
$$
\tilde\Omega=\left\{z\in{\mathbb C}^N\,;\;|P(z)|^2:=\sum|P_j(z)|^2<\varepsilon(1+|z|^2)^{-A},\;|Q_j(z)|^2+C|P(z)|<1\right\}$$
with $\varepsilon\ll 1$ and $A,\,C\gg 1$ chosen so large that $\tilde\Omega\Subset h(V)$. Then $\rho$ maps $\tilde\Omega$ submersively onto~$\Omega$, and $\tilde\Omega$~is a Runge open subset in ${\mathbb C}^N$. We simply take ${\tilde{\mathcal F}}=(\rho_*)^{-1}{\mathcal F}\subset T\tilde\Omega$ to be the inverse image of $\mathcal F$ in $\tilde\Omega$.
\end{proof}
\vskip2mm

\noindent
{\em End of proof of Proposition~\ref{runge-prop}.} Thanks to Prop.~\ref{affine-thickening} and our preliminary discussion, we may assume that $f:X\hookrightarrow Z\cap\tilde\Omega\subset\tilde\Omega$ is a real analytic embedding into a Runge open subset $\tilde\Omega\subset{\mathbb C}^N$, transversally to a holomorphic foliation $\tilde{\mathcal F}$ on $\tilde\Omega$, with $J_X=J_f$. Assume that such foliations can be approximated by Nash algebraic foliations $\tilde{\mathcal F}_\nu$ on~$\tilde\Omega$, uniformly on compact subsets. This means that $\tilde{\mathcal F}_\nu$ is given by a Nash algebraic distribution $\delta_\nu:\tilde\Omega\to{\rm Gr}({\mathbb C}^N,n)$ that is moreover integrable. It is worth observing that if the integrability assumption is dropped, then the existence of the Nash algebraic approximating sequence $\delta_\nu$ is actually granted by \cite{DLS93} (but it seems quite difficult to enforce the integrability condition in this context).

Now $M=f(X)$ is still transverse to ${\mathcal F}_\nu$ for $\nu\ge \nu_0$, and in this way we would obtain a sequence of integrable complex structures $J_\nu=f^*J^{Z,{\mathcal F}_\nu}_M$ on $X$ that approximate $J_X=f^*J^{Z,{\mathcal F}}_M$ in the Kuranishi space of small deformations of~$X$.\qed

\begin{remark} {\rm Instead of embedding just $X$ in an affine algebraic manifold~$Z$, we could instead embed the whole Kuranishi space ${\mathcal X}\to S$ of $X$ into a product $Z\times S$, keeping $S$ as a parameter space (it is enough to take a small Stein neighborhood $S'\subset S$ containing the base point $0$ of the central fiber ${\mathcal X}_0=X$). For this we simply observe that $X$ embeds diagonally as a totally real submanifold $\Delta\subset X\times\overline X\subset {\mathcal X}\times \overline X$, hence $\Delta$ admits a Stein neighborhood as before, and we can embed the latter as a Runge open set in an affine algebraic manifold. By looking at the projection $\pr_1$ to $\mathcal{X}$ and at the fibers over $S$, the vertical tangent bundle $\pr_2^*T\overline X$ defines a holomorphic foliation transverse to all complex structures $\mathcal{X}_t$ close to
$\mathcal{X}_0=X$. It is then not unlikely that one could also embed the original complex structure $J_X$ (without having to take an approximation), by using some sort of openness argument and the fact that we have embedded the whole Kuranishi space.}
\end{remark}

\begin{remark} {\rm It should also be observed that there is probably no topological obstruction to the approximation problem. In fact, it is enough to consider the case where we have a real analytic embedding $f:X\hookrightarrow Z$, transversal to a holomorphic foliation given by a {\em trivial} subbundle ${\mathcal F}\subset TZ$ on some Runge open set $\Omega\Subset Z$ of an affine algebraic manifold. Otherwise, thanks to \cite{DLS93}, one can always assume that $\mathcal F$ is isomorphic to the restriction of an algebraic vector bundle $\mathcal F'$ on $Z$ (possibly after shrinking~$\Omega$ and replacing $Z$ by some finite cover $Z'\to Z$). Then, since $Z$ is affine, one can find a surjective algebraic morphism \hbox{$\mu:{\mathcal O}_Z^{\oplus p}\to{\mathcal F'}$}. Its kernel ${\mathcal G}=\Ker\mu$ satisfies ${\mathcal F}'\oplus{\mathcal G}\simeq{\mathcal O}_Z^{\oplus p}$, i.e.\ ${\mathcal F}'\oplus{\mathcal G}$ is algebraically trivial. We replace $Z$ by $\tilde Z={\mathcal G}$ (the total space of $\mathcal G$), and $\mathcal F$ by the inverse image \hbox{$\tilde{\mathcal F}=(\pi_*)^{-1}({\mathcal F})\subset T\tilde Z$} via $\pi:\tilde Z\to Z$. Then $\tilde{\mathcal F}\simeq\pi^*({\mathcal F}'\oplus{\mathcal G})$, hence $\tilde{\mathcal F}$ is holomorphically trivial. We can therefore always reduce ourselves to the case where $\mathcal F$ is holomorphically trivial. When this is the case, $\mathcal F$ admits a global holomorphic frame $(\zeta_j)$, and the Lie brackets satisfy $[\zeta_j,\zeta_k]=\sum_\ell u_{jk\ell}\zeta_\ell$ for some uniquely defined holomorphic functions $u_{jk\ell}$ on $\Omega$. These functions can of course be approximated by a sequence of polynomials $p^\nu_{jk\ell}\in{\mathbb C}[Z]$, $\nu\in{\mathbb N}$, but it is unclear how to construct Nash algebraic foliations from these data$\,\ldots$}
\end{remark}

\section{Categorical viewpoint\label{categorical}} 
Compact complex manifolds form a natural category CCM, where the morphisms are 
by defini\-tion holomorphic maps $\Psi:X\to Y$. The present work is also
concerned with the 
category FAV of ``foliated algebraic varieties'' for which 
the objects are suitable triples~$(Z,\mathcal{D},f)$.
Here, $Z$ is a non singular complex algebraic variety, 
\hbox{$\mathcal{D}\subset\mathcal{O}(TZ)$}
an integrable algebraic subsheaf and $f:X\to Z$ a
$\mathcal{C}^\infty$ embedding of a compact even 
dimensional real manifold~$X$ transversally to $\mathcal{D}$. A~morphism
$$
(Z,\mathcal{D},f)\longrightarrow (W,\mathcal{E},g)\qquad
\hbox{(where $g:Y\to W$ is transverse to $\mathcal{E}$),}
$$
is a pair $(\Psi,\varphi)$ where $\Psi:Z\to W$ is an algebraic morphism 
such that $d\Psi(\mathcal{D})\subset\Psi^*\mathcal{E}$, and 
$\varphi:X\to Y$ a differentiable map with the property that
$\Psi\circ f=g\circ\varphi$. By definition, $\varphi$~is then a holomorphic
map from $(X,J_f)$ into $(Y,J_g)$, since $d\Psi$ induces a holomorphic
(and even algebraic) morphism
$$
d\Psi\mathop{\rm mod}\mathcal{D}:TZ/\mathcal{D}\to\Psi^*(TW/\mathcal{E})
$$
between the ``transverse structures''. We have a natural fonctor
$$
{\rm FAV}\to{\rm CCM},\qquad 
(Z,\mathcal{D},f)\mapsto (X,J_f)\label{bogo-fonctor}
$$
and the basic question~\ref{bogo-question} can then be reformulated

\begin{question}\label{is-fonctor-surjective}
Is the fonctor ${\rm FAV}\to{\rm CCM}$ surjective~?
\end{question}

Here, one can of course identify triples in FAV given by isotopy equivalent
transverse embeddings $f:X\to Z$, and complex structures $(X,J)$, $(X,J')$
that are equivalent through a path of biholomorphic maps
$h_t:(X,J)\to(X,J_t)$, $t\in[0,1]$, $J_0=J$, $J_1=J'$, $h_0=\Id_X$. One could 
also say that a morphism
$$
(\Psi,\varphi):(Z,\mathcal{D},f:X \to Z)\longrightarrow
(W,\mathcal{E},g:Y\to W)
$$
defines an ``isomorphism of transverse structures'' if $\varphi:X\to Y$ is a 
diffeomorphism and
$$
d\Psi\mathop{mod}\mathcal{D}:TZ/\mathcal{D}\to\Psi^*(TW/\mathcal{E})
$$
yields a bundle isomorphism in restriction to $f(X)$ (so that d$\Psi
\mathop{\rm mod}\mathcal{D}$ has to be a generic isomorphism between 
the quotients;
one could further allow $\Psi$ to be merely a rational morphism provided the indeterminacy set does not intersect $f(X)$). Such an isomorphism of transverse structure is obtained by taking $\Psi=\pr_1:Z\times A\to Z$ with an arbitrary algebraic variety $A$, pulling-pack $\mathcal{D}$ to $T(Z\times A)$ and replacing
$f$ with $f\times\{a_0\}:X\to Z\times\{a_0\}\,$; in this way one can add many ``extra parameters'' to $Z$ that play no role at all in the transverse complex structure. One can obtain another such situation by blowing up or blowing down subvarieties of $Z$ that do not intersect $f(X)$, possibly after displacing $f(X)$ by a transverse isotopy. Question~\ref{is-fonctor-surjective} can then be completed as follows, but we currently have extremely little evidence about it:

\begin{question}\label{what-are-fibers}
What are the fibers of ${\rm FAV}\to{\rm CCM}$, at least if we identify
objects through transverse isotopies and isomorphisms of transverse 
structures~? Are they parametrized by finite dimensional moduli spaces~?
\end{question}

Although we are not able to answer these questions, it can be observed that
our weak version of Bogomolov's conjecture (Theorem~\ref{bogomolov-thm})
also admits a categorical interpretation. For this, we introduce a
category WFAV of ``weakly foliated algebraic varieties''. Its objects 
are quadruplets
$$
(Z,\mathcal{D},\mathcal{S},f)
$$
where $Z$ is a complex algebraic variety, $\mathcal{S}\subset\mathcal{D}$
are nested algebraic subsheaves of $\mathcal{O}(TZ)$ such that $[\mathcal{S},
\mathcal{S}]\subset\mathcal{D}$, and $f:X\to Z$ is a smooth embedding
of a compact even dimensional real manifold $X$ into $Z$ such that $f(X)$ 
is transverse to $\mathcal{D}$ and
$\Im(\overline\partial_{J_f}f)\subset\mathcal{S}$ for the induced
almost complex structure~$J_f$. It then follows from 
Prop.~\ref{integrable-prop} that $(X,J_f)$ is still integrable, thus
$(X,J_f)$ is a compact complex manifold. One can see that a transverse
isotopy $(f_t)_{t\in[0,1]}$ such that 
$\Im(\smash{\overline\partial}f_t)\subset\mathcal{S}$ yields
a path of biholomorphic structures. Morphisms
$$
(\Psi,\varphi):(Z,\mathcal{D},\mathcal{S},f)\to
(W,\mathcal{E},\mathcal{T},g)
$$
are defined to be algebraic morphisms $\Psi:Z\to W$ such that $d\Psi$ maps
$\mathcal{D}$ in $\mathcal{E}$, $\mathcal{S}$ in $\mathcal{T}$, and
$\Psi\circ f=g\circ\varphi$ with $\varphi:X\to Y$. We then get the following 
affirmative answer to the weak analogue of Bogomolov's question.

\begin{theorem}The natural fonctor ${\rm WFAV}\to{\rm CCM}$ defined by
$(Z,\mathcal{D},\mathcal{S},f)\mapsto(X,J_f)$ is surjective.
\end{theorem}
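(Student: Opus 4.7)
The plan is to apply Theorem~\ref{bogomolov-thm} to obtain a transverse embedding $f:X\hookrightarrow Z_{n,k}$ with $J=J_f$, $\overline{\partial}_J f$ injective, and $\Im(\overline{\partial}_J f)\subset I_{{\mathcal D}_{n,k}}$, and then to enlarge the ambient space into an algebraic variety that carries a universal isotropic subbundle. Let $\pi:\tilde Z\to Z_{n,k}$ denote the isotropic Grassmannian bundle whose fiber over $z$ is the algebraic variety of $n$-dimensional subspaces $S\subset{\mathcal D}_{n,k,z}$ satisfying $\theta_{|S\times S}=0$; if needed, replace it by an equivariant resolution so that $\tilde Z$ is smooth. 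Let ${\mathcal T}\subset\pi^*{\mathcal D}_{n,k}$ be the resulting tautological subbundle.

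On $\tilde Z$ I will introduce the two algebraic subsheaves
$$
\tilde{\mathcal D}:=(d\pi)^{-1}({\mathcal D}_{n,k})\subset T\tilde Z,\qquad \tilde{\mathcal S}:=(d\pi)^{-1}({\mathcal T})\subset\tilde{\mathcal D}.
$$
The subsheaf $\tilde{\mathcal D}$ has complex codimension $n$ in $T\tilde Z$, and there is a canonical identification $T\tilde Z/\tilde{\mathcal D}\simeq\pi^*(TZ_{n,k}/{\mathcal D}_{n,k})$ of holomorphic bundles. Lift $f$ to the smooth embedding $\tilde f:X\to\tilde Z$ defined by $\tilde f(x):=(f(x),\,\Im(\overline{\partial}_J f(x)))$. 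I then need to verify four properties: (a)~$\tilde f$ is transverse to $\tilde{\mathcal D}$; (b)~$J_{\tilde f}=J$; (c)~$\Im(\overline{\partial}_{J_{\tilde f}}\tilde f)\subset\tilde{\mathcal S}$; and (d)~$[\tilde{\mathcal S},\tilde{\mathcal S}]\subset\tilde{\mathcal D}$. Items (a) and (b) are immediate from $\pi\circ\tilde f=f$ together with the isomorphism of transverse quotients; item (c) follows because holomorphy of $\pi$ gives $d\pi\circ\overline{\partial}_{J_{\tilde f}}\tilde f=\overline{\partial}_{J_f}f$, which by construction of $\tilde f$ takes values in ${\mathcal T}_{\tilde f(x)}$.

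The genuine substance is (d), which I intend to verify by a direct local computation. In coordinates $(z_1,\dots,z_N,t_1,\dots,t_m)$ on $\tilde Z$ adapted to $\pi$, a section of $\tilde{\mathcal S}$ reads $\xi=\sum_i A_i(z,t)\,\partial_{z_i}+\sum_j B_j(z,t)\,\partial_{t_j}$ with $A(z,t)\in{\mathcal T}_{(z,t)}\subset{\mathcal D}_{n,k,z}$. Expanding $[\xi,\eta]$ for two such sections and projecting to $\pi^*TZ_{n,k}$ produces two types of contributions: ``vertical-derivative'' terms of the shape $\sum_{i,j} B_j\partial_{t_j}A'_i\,\partial_{z_i}$ (and the symmetric counterpart), together with a ``horizontal'' term $\sum_{i,k}(A_k\partial_{z_k}A'_i-A'_k\partial_{z_k}A_i)\,\partial_{z_i}$. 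The vertical-derivative terms remain in ${\mathcal D}_{n,k,z}$ because $A'(z,t)\in{\mathcal D}_{n,k,z}$ for every $t$ and the subspace ${\mathcal D}_{n,k,z}$ is $t$-independent. The horizontal term, evaluated at fixed $t=t_0$, is the Lie bracket on $Z_{n,k}$ of two vector fields pointwise valued in the isotropic subspace ${\mathcal T}(\cdot,t_0)\subset{\mathcal D}_{n,k}$; since $\theta(\zeta,\eta)=[\zeta,\eta]\,\mathrm{mod}\,{\mathcal D}_{n,k}$ and $\theta_{|{\mathcal T}\times{\mathcal T}}=0$, this bracket also lies in ${\mathcal D}_{n,k,z}$. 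Combining both, $d\pi([\xi,\eta])\in\pi^*{\mathcal D}_{n,k}$, i.e.\ $[\xi,\eta]\in\tilde{\mathcal D}$.

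Once (a)--(d) are established, the quadruple $(\tilde Z,\tilde{\mathcal D},\tilde{\mathcal S},\tilde f)$ is an object of $\mathrm{WFAV}$ mapping to $(X,J)\in\mathrm{CCM}$, which proves surjectivity. I expect the verification of (d) to be the only real conceptual obstacle; the remaining technical nuisance is to ensure that $\tilde f(X)$ avoids the singular locus of the isotropic Grassmannian (or equivalently to pull everything back to a resolution), which should either hold generically for the embedding produced by Theorem~\ref{bogomolov-thm} or be absorbed by restricting to a suitable Zariski open neighbourhood of $\tilde f(X)$ where the tautological flag data remain algebraic subsheaves of ${\mathcal O}(T\tilde Z)$.
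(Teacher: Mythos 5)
Your proposal is correct and follows essentially the same route as the paper: apply Theorem~\ref{bogomolov-thm}, pass to the isotropic Grassmannian bundle $W_{n,k}\subset{\rm Gr}(\mathcal{D}_{n,k},n)$ equipped with the pulled-back distribution $(d\pi)^{-1}(\mathcal{D}_{n,k})$ and the tautological subbundle, and lift $f$ to $g=(f,\overline\partial f)$. The only differences are cosmetic: you take $\tilde{\mathcal S}=(d\pi)^{-1}(\mathcal{T})$ rather than a rank-$n$ tautological lift (arguably the cleaner choice), and you spell out the local bracket computation establishing $[\tilde{\mathcal S},\tilde{\mathcal S}]\subset\tilde{\mathcal D}$, which the paper asserts holds ``by construction''.
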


\begin{proof} First, we apply Theorem~\ref{bogomolov-thm} to a compact complex manifold $(X,J)$ to obtain a smooth embedding $f:X\to Z_{n,k}$ that is transverse to a distribution $\mathcal{D}_{n,k}\subset TZ_{n,k}$. We replace $Z_{n,k}$ by $W_{n,k}\subset{\rm Gr}(\mathcal{D}_{n,k},n)$, where $W_{n,k}$ is defined as the set of all $n$-dimensional subspaces
$S\in{\rm Gr}(\mathcal{D}_{n,k},n)$ such that $\theta_{|S\times S}=0$. Notice that $W_{n,k}\to Z_{n,k}$ is actually a smooth fiber bundle, thanks to the homogeneous action of  the linear group preserving the flag construction. Let $\pi:W_{n,k}\to Z_{n,k}$. We equip $W_{n,k}$ with the distribution $\mathcal{E}_{n,k}:=(d\pi)^{-1}(\mathcal{D}_{n,k})$. Then $\mathcal{E}_{n,k}$ possesses a natural rank $n$ subbundle $\mathcal{T}_{n,k}\subset\mathcal{E}_{n,k}$ defined as the restriction of the tautological subbundle on the Grassmannian bundle. Now, the smooth map
$g=(f,\overline\partial f)$ defines a morphism $g:X\to W_{n,k}$ that is 
transverse to $\mathcal{E}_{n,k}$, and we have
$[\mathcal{E}_{n,k},\mathcal{E}_{n,k}]\subset \mathcal{T}_{n,k}$ by construction.
It is easy to see that the object $(W_{n,k},\mathcal{E}_{n,k},
\mathcal{T}_{n,k},g)\in{\rm WFAV}$ is mapped to $(X,J)\in{\rm CCM}$ by the
natural fonctor.
\end{proof} 
\vskip8mm

\bibliographystyle{amsalpha}

\vskip1cm\noindent
Jean-Pierre Demailly \&\ Herv\'e Gaussier\\
Universit\'e Grenoble Alpes, Institut Fourier,\\
UMR 5582 du CNRS, 100 rue des Maths\\
38610 Gi\`eres, France\\
\emph{e-mail}\/: jean-pierre.demailly@univ-grenoble-alpes.fr,\\
\phantom{\emph{e-mail}\/:} herve.gaussier@univ-grenoble-alpes.fr

\end{document}